\documentclass[12pt, a4paper]{amsart}

\usepackage[hmargin=30mm, vmargin=25mm, includefoot, twoside]{geometry}
\usepackage[bookmarksopen=true]{hyperref}
\usepackage{amsmath,amsfonts,amsthm,amssymb,dsfont,mathdots}
\usepackage[alphabetic]{amsrefs}
\usepackage{hyperref}

\usepackage{eucal}

\usepackage{times}
\usepackage{wasysym,stmaryrd}
%\usepackage{wasysym,stmaryrd}
%\swapnumbers

\usepackage{enumerate}

\usepackage{mathrsfs}
\usepackage{enumerate, xspace}
\usepackage{graphicx}
\usepackage{color}

\usepackage{pictexwd,dcpic,epsf}

\newtheorem{lem}{Lemma}[section]
\newtheorem{tw}[lem]{Theorem}
\newtheorem*{mtw}{Main Theorem}

\newtheorem{cor}{Corollary}
\newtheorem{prop}[lem]{Proposition}

\newcommand {\mr}{\mathrm}

%---------------------------my new defn's----------------------------%

\newcommand {\wt}{\widetilde}

\newcommand {\ov}{\overline}

\newcommand {\rems}{\noindent {\bf Remarks. }}

\theoremstyle{definition}
\newtheorem{de}[lem]{Definition}
\newtheorem*{open}{Open problem}

\newcommand {\xz}{X^{(0)}}
\newcommand {\xj}{X^{(1)}}
\newcommand {\dw}{d_{\mathcal W}}

\newcommand {\ag}{A(\gamma)}
\newcommand {\gi}{\mr{girth}}
\newcommand {\di}{\mr{diam}}
\newcommand {\bds}{($\beta,\delta$)--se\-pa\-ration }

%-----------------------end of my new defn's--------- begin of Goulnara's defn ----------------%

\setlength{\oddsidemargin}{0.0in}
\setlength{\evensidemargin}{0.0in}

\setlength{\topmargin}{0.25in}
\setlength{\textheight}{8.0in}
\setlength{\marginparwidth}{1.75in}
\setlength{\marginparsep}{0.10in}

\theoremstyle{remark}

%---------------------- end of Goulnara's defn's---------------------%

\begin{document}

\title[Graphical small cancellation groups with the Haagerup property]{Graphical small cancellation groups with the Haagerup property}

\author{Goulnara Arzhantseva}
\address{Universit\"at Wien, Fakult\"at f\"ur Mathematik\\
Oskar-Morgenstern-Platz 1, 1090 Wien, Austria.
}
\address{Erwin Schr\"odinger International Institute for Mathematical Physics\\
Boltzmanngasse~9, 1090 Wien, Austria.
}
\email{goulnara.arzhantseva@univie.ac.at}

\author{Damian Osajda}
\address{Instytut Matematyczny,
Uniwersytet Wroc\l awski (\textup{on leave})\\
pl.\ Grunwaldzki 2/4,
50--384 Wroc{\l}aw, Poland}
\address{Universit\"at Wien, Fakult\"at f\"ur Mathematik\\
Oskar-Morgenstern-Platz 1, 1090 Wien, Austria.
}
\email{damian.osajda@univie.ac.at}
\subjclass[2010]{{20F06, 20F67, 46B85, 46L80}} \keywords{Small cancellation theory, Haagerup property, Gromov's a-T-menability}

\thanks{G.A.\ was partially supported by the ERC grant ANALYTIC no.\ 259527.
D.O.\ was partially supported by Narodowe Centrum Nauki, decision no DEC-2012/06/A/ST1/00259.}
%\date{\today}

\begin{abstract}
We prove the Haagerup property (= Gromov's a-T-menability)  for finitely
generated groups defined by infinite presentations satisfying the graphical $C'(\ov {\lambda})$--small cancellation
condition with respect to graphs endowed with a compatible wall structure. We deduce that these groups are coarsely embeddable into a Hilbert space
and that the strong Baum-Connes conjecture and, hence, the Baum-Connes conjecture with arbitrary coefficients
hold for them.  As the main step we show that $C'(\ov {\lambda})$--complexes satisfy the linear separation property.
Our result provides many new examples and a general technique to show the
Haagerup property for graphical small cancellation groups.
\end{abstract}

\maketitle

\section{Introduction}
\label{s:intro}
The aim of this paper is two-fold: to show a general analytic result, the Haagerup property, for a wide class of graphically presented groups
and to provide an approach to a long-standing problem on the existence of a coarsely embeddable, into a Hilbert space, but not
coarsely amenable group.

Graphical presentations are group presentations where relators are words labeling cycles of a given graph.
Every group has a graphical presentation in a trivial way: the corresponding graph is the disjoint union of simple cycles labeled by the relator words.  
In general, given a labeled graph, one expects that its combinatorial structure and properties of the labeling encode  algebraic and geometric features  of the group it defines. 

The first concrete use of non-trivial graphical presentations is due to Rips and Segev 
in the context of Kaplansky's zero-divisor conjecture:  they give the first example of a torsion-free group that is not a unique product group~\cite{RS}. 
A recent application is the celebrated construction of Gromov's monster group which contains an infinite expander family of graphs in the Cayley
graph~\cites{Gro, AD}.  Both constructions follow a general idea: find an appropriate combinatorial interpretation (e.g.\ the expansion) of
a required group property  and combine it with a suitable small cancellation condition on the labeling 
(e.g.\ the geometric small cancellation in the case of Gromov's monster) in order to produce a group containing a given graph in its Cayley graph. This approach leads 
to spectacular counterexamples: to the unique product property~\cite{RS} and to the Baum-Connes conjecture with arbitrary coefficients~\cite{HLS}.

In contrast to such specific counterexamples, our first goal is to prove an affirmative result, the Haagerup property, for many groups given by 
graphical small cancellation presentations. 

A second countable, locally compact group $G$ has the \emph{Haagerup property}  (or $G$ is \emph{a-T-menable}  in the sense of Gromov) if it possesses
a proper continuous affine isometric action on a Hilbert space. The concept first appeared in the seminal paper of Haagerup \cite{Haa}, where this  property was shown for finitely generated free groups. 
Regarded as a weakening of von Neumann's amenability and a strong negation of Kazhdan's property (T), the Haagerup property has been  revealed independently in harmonic analysis, non-commutative geometry, and ergodic theory \cites{AW,Cho,BoJaS,BR}, \cite{Gro88}*{4.5.C}, \cite{Gro93}*{7.A and 7.E}.
A major breakthrough was a spectacular proof of Higson and Kasparov \cite{HK}  of the strong Baum-Connes conjecture (which is
strictly stronger than the Baum-Connes conjecture with coefficients~\cite{MeyerNest})  
for all groups with the Haagerup property. It follows that the Novikov higher signature conjecture and, for discrete torsion-free groups, the Kadison-Kaplansky idempotents conjecture hold for
these groups. Nowadays, many groups have been shown to have the Haagerup property and  significant applications in K-theory and topology have been discovered \cites{ChCJJV,MislinValette}, making groups with the Haagerup property increasingly fundamental to study.

Groups given by classical small cancellation 
presentations\footnote{The graph is the disjoint union of simple cycles and  the labeling satisfies the classical $C'(1/6)$--small cancellation condition~\cite{LS}.} are known 
to possess the Haagerup property by a result of Wise~\cite{W-sc} for finitely presented groups and by a recent result of the authors~\cite{AO} for infinitely presented groups.
In contrast, there exist non-trivial \emph{graphical small cancellation} presentations defining infinite groups with 
Kazhdan's property (T), hence,  without the Haagerup property~\cites{Gro,Silberman,OllivierWise}.

In this paper,  we determine a natural compatibility assumption between the graph combinatorics and  
the graphical small cancellation condition on the labeling, which guarantee the Haagerup property of the resulting graphically presented group.

\begin{mtw}
\label{t:main}
Let $G$ be a finitely generated group given by graphical presentation satisfying the lacunary walling condition (see Definition~\ref{d:lwpres}).
Then $G$ acts properly on a space with walls. In particular,  $G$ has the Haagerup property.
\end{mtw}

A systematic discussion of groups having proper actions on a space with walls can be found in~\cite{Cor}.
In the terminology of that paper, our main result means that $G$ has Property PW.

We actually prove a stronger result which, in addition, holds for more general spaces.

\begin{tw}\label{t:linear}
A complex $X$ with the lacunary walling condition satisfies the linear separation property, that is, the path metric on $X^{(0)}$ and the wall pseudo-metric are bi-Lipschitz equivalent.
\end{tw}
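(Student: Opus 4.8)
\emph{Proof strategy.} Recall that the wall pseudo-metric is $d_{\mathcal W}(x,y)=\#\{W: W \text{ separates } x \text{ from } y\}$, where $W$ \emph{separates} $x$ from $y$ if they lie in different halfspaces of the partition of $X^{(0)}$ determined by $W$. One of the two inequalities is essentially formal: in the wall structure carried by a $C'(\overline{\lambda})$--complex every edge of $X$ lies in a single wall, so a combinatorial path of length $n$ from $x$ to $y$ meets at most $n$ walls; as every wall separating $x$ from $y$ is met by such a path, we get $d_{\mathcal W}(x,y)\le d_{X^{(0)}}(x,y)$. The content of the theorem is therefore the reverse estimate: there is $c>0$ with $d_{X^{(0)}}(x,y)\le c\, d_{\mathcal W}(x,y)$, i.e.\ every geodesic of $X^{(0)}$ of length $n$ has at least $c^{-1}n$ walls separating its endpoints.

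So fix a geodesic $\gamma$ in $X^{(0)}$ from $x$ to $y$, of length $n$. The plan is first to produce $\Theta(n)$ walls crossing $\gamma$ at all, and then to upgrade ``crosses $\gamma$'' to ``separates $x$ from $y$''. For the first step I would invoke the combinatorial geometry of $C'(\overline{\lambda})$--complexes: by Strebel's classification of reduced disc diagrams, a geodesic traverses each copy $R$ of a relator graph $\Gamma_i$ essentially once, along a sub-path whose length is comparable, with a constant uniform in $i$, to the portion of $\gamma$ it accounts for — and this is where the lacunary gap between piece sizes and girths is what keeps the constant uniform. Since $R$ carries the wall structure of $\Gamma_i$, which satisfies the linear separation property with a constant $\delta>0$ independent of $i$ (part of the walling hypothesis), this sub-path is crossed by at least $\delta$ times its length many distinct walls. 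Summing over the maximal relator pieces of $\gamma$, and counting separately the edges of $\gamma$ lying in no relator (each in its own wall), we obtain a family $\mathcal W_\gamma$ of at least $c_1 n$ walls, each crossing $\gamma$ at least once.

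It remains, and this is the crux, to find among the walls in $\mathcal W_\gamma$ a linear number that cross $\gamma$ an \emph{odd} number of times, equivalently separate $x$ from $y$. The obstruction is a wall meeting $\gamma$ twice, and I would eliminate it (or bound its multiplicity) by a disc-diagram argument. If a wall $W$ meets $\gamma$ in two edges $e,e'$, the two crossing points are joined by a path in the ``hypergraph'' $\Gamma_W$ carrying $W$, and the image of this path in $X$ together with the subarc of $\gamma$ between $e$ and $e'$ bounds a reduced disc diagram $D$ over the presentation. Each step of such a hypergraph path enters and leaves a $2$--cell of $X$ through a single wall of the corresponding $\Gamma_i$, hence along a controlled (short) portion of that cell's boundary; thus the trace of $\Gamma_W$ on $\partial D$ decomposes into pieces short relative to the relators they lie on. Since $\gamma$ is geodesic and the presentation is $C'(\overline{\lambda})$ with $\overline{\lambda}$ lacunary, Greendlinger's lemma applied to $D$ forces $D$ to be degenerate, so $e$ and $e'$ are ``wall-adjacent'' only in a bounded way. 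Hence each wall of $\mathcal W_\gamma$ crosses $\gamma$ a uniformly bounded number of times, a definite fraction of the walls in $\mathcal W_\gamma$ separate $x$ from $y$, and $d_{X^{(0)}}(x,y)\le c\, d_{\mathcal W}(x,y)$.

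The main difficulty I foresee is precisely this disc-diagram step: turning ``a hypergraph path plus a geodesic subarc bounds a reduced diagram that must be degenerate'' into a rigorous statement requires that wall (hypergraph) paths interact rigidly with the $2$--cells of $X$ — so that their boundary traces really are unions of short pieces and Greendlinger's lemma applies — and this rigidity is exactly what the compatibility between the wall structures on the $\Gamma_i$ and the small cancellation condition (the lacunary walling condition) is designed to provide. The lacunarity itself plays the auxiliary but essential role of making every inequality above hold with a single uniform constant $c$, independent of the relator along which one happens to be working.
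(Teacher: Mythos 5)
Your reduction of the theorem to the lower bound on a geodesic $\gamma$, and the easy inequality $\dw(p,q)\leqslant d(p,q)$, match the paper. But the second half of your argument has a genuine gap, and it occurs exactly at the step you yourself identify as the crux. First, the claim that a disc-diagram/Greendlinger argument forces a wall meeting $\gamma$ twice to do so only "in a degenerate/bounded way" is not available: a wall can meet a geodesic twice entirely \emph{inside a single relator} (the $\beta$--condition only forces the two edges of a wall to be at distance about $\beta\cdot\gi r$ apart, and $r\cap\gamma$ is a geodesic segment that can be that long), so there is no nondegenerate diagram to collapse and nothing for small cancellation to contradict. This is precisely why the paper introduces the $\delta$--condition, which does not \emph{eliminate} double crossings but only bounds the \emph{fraction} of edges of a geodesic in $r$ whose walls return to it. Second, and independently, even if you had established that every wall crosses $\gamma$ a uniformly bounded number of times, the inference "hence a definite fraction of the walls in $\mathcal W_\gamma$ separate $x$ from $y$" is invalid: all walls could cross exactly twice and then none would separate. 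The paper's Section~\ref{optim} realizes essentially this scenario --- in the $(4k+2)$--gon example every wall meets the geodesic at most twice, yet $d(v_1,v_{2k+2})=2k+1$ while $d_W(v_1,v_{2k+2})=1$ --- so bounded multiplicity alone cannot yield linear separation.

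The paper's actual route is quantitative rather than rigidity-based. It works with the set $\ag$ of edges of $\gamma$ whose wall meets $\gamma$ in \emph{exactly one} edge (such walls automatically separate the endpoints, so $\dw(p,q)\geqslant|\ag|$), covers $\gamma$ by relator neighborhoods $N_e=r\cap\gamma$ which are long by the $\beta$--condition (Lemma~\ref{l:Ne}), and bounds the edges of $N_e$ \emph{outside} $\ag$ by splitting them into three types: walls returning to $\gamma$ within $r$ (controlled by the $\delta$--condition), walls returning after passing through at least two further relators (controlled by the lacunarity bound $b_r(\lambda(r)\gi r)\leqslant K\gi r$, Lemma~\ref{l:C}), and edges whose first relator is not $r$ (controlled by small cancellation). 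The compatibility inequality is exactly what makes the leftover fraction at least $M$, and the local-to-global density principle finishes the proof. Your proposal does not engage with the multi-relator excursions of a wall (the paper's Case C, where lacunarity is genuinely needed, not merely to "keep constants uniform"), and its final counting step fails; so as written the argument does not establish the theorem.
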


Our method arises in ambition to approach the following -- still open -- well-known problem (see e.g.~\cite{claire}, \cite{NowakYu}*{5.3.3}):\smallskip

\noindent
\emph{Does there exist a finitely generated group which coarsely embeds into a Hilbert space but which is not coarsely amenable?}\smallskip

The concept of coarse embedding was introduced by Gromov~\cite{Gro93}*{p.218}.
Yu~\cite{Yu}*{Theorem 1.1} proved the coarse Baum-Connes conjecture for every discrete space with bounded geometry coarsely embeddable into a Hilbert space. 
This implies the Novikov higher signature conjecture for all closed manifolds whose fundamental group, viewed with the word length metric, 
admits such a coarse embedding. This result  generated an intense study of groups and metric spaces coarsely embeddable into a Hilbert space.

Coarse amenability is a weak form of amenability. It was introduced in~\cite{Yu}, under the term Property A,  as a sufficient condition for coarse embeddings into a Hilbert space.
For a countable discrete group $G$, coarse amenability is equivalent to the existence of a topological amenable action of $G$ on a compact Hausdorff space and to the $C^*$--exactness of the reduced 
$C^*$--algebra $C^*_r(G)$, see e.g.~\cite{NowakYu}.

All finitely generated groups currently known to be coarsely embeddable into a Hilbert space are, moreover, coarsely amenable. That is,
the above question remains open. Towards a positive answer, we have the following assertion.

\begin{tw}\label{t:nota}
Let $(r_i)_{i\in \mathbb N}$ be graphs with all vertex degrees at least 3.
Let $G$ be given by graphical presentation with relators $(r_i)_{i\in \mathbb N}$ 
satisfying the lacunary walling condition. Then $G$ has the Haagerup property but is not coarsely amenable. In particular,
$G$ admits a coarse embedding into a Hilbert space but $G$ is not coarsely amenable. 
\end{tw}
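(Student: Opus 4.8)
The plan is to combine the Haagerup property, which follows directly from the Main Theorem once we verify the lacunary walling condition, with a failure of coarse amenability coming from the presence of the relator graphs $(r_i)$ inside the Cayley graph of $G$. First I would invoke the Main Theorem: since $G$ is given by a graphical presentation satisfying the lacunary walling condition, $G$ acts properly on a space with walls and hence has the Haagerup property; by the Higson--Kasparov theorem this already yields the strong Baum--Connes conjecture, but for this statement we only need the a-T-menability. The substantive part is the negation of coarse amenability, and here the key structural input is that in a graphical small cancellation presentation the relator graphs $r_i$ embed isometrically (or at least quasi-isometrically with uniform constants) into the Cayley graph $\cay(G,S)$; this is the standard graphical small cancellation phenomenon that underlies Gromov's monster construction, and it is available because the lacunary walling condition includes a $C'(\ov\lambda)$--type hypothesis. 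Thus $\cay(G,S)$ contains an isometrically embedded copy of the disjoint union $\bigsqcup_i r_i$ as a subspace with bounded geometry.

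Next I would recall the permanence property: coarse amenability (Property A) passes to subspaces, so if $\bigsqcup_i r_i$ is not coarsely amenable as a metric space, then neither is $\cay(G,S)$, and hence $G$ is not coarsely amenable. So the task reduces to showing that the coarse disjoint union of the graphs $r_i$ fails Property A. The natural mechanism is expansion: a sequence of finite graphs with degree bounded below by $3$ and girth tending to infinity — or more precisely any sequence whose combinatorics forces a uniform failure of Property A — gives a non-coarsely-amenable coarse disjoint union. Here the degree hypothesis ``all vertex degrees at least $3$'' is exactly what is being used: together with the small cancellation condition (which forces the $r_i$ to have large girth, since short relators would violate $C'(\ov\lambda)$ once $|r_i|\to\infty$), the graphs $r_i$ have bounded geometry, unbounded diameter, and a lower bound on degree, and one checks that such a sequence cannot be uniformly Property A. Concretely I would appeal to the characterization that a coarse disjoint union of finite graphs with $\inf_i \mathrm{girth}(r_i)=\infty$ is not coarsely amenable unless the graphs eventually become ``thin'' (e.g.\ have a bounded number of edges), which is excluded by the degree-$\geq 3$ assumption since that forces $|E(r_i)| \geq \frac{3}{2}|V(r_i)|$ with $|V(r_i)|\to\infty$.

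The hard part will be pinning down precisely which combinatorial feature of the $r_i$ is being exploited and citing or proving the corresponding non-Property-A statement cleanly. There are two candidate routes: (i) argue that the $r_i$ form an expander family — but this is false in general, as the $r_i$ need not be expanders, so this route is too strong; (ii) argue more carefully that bounded geometry plus unbounded girth plus a lower degree bound already obstructs Property A, via the fact that Property A for a coarse disjoint union would give, for each $R$ and $\varepsilon$, maps into $\ell^1$-balls with uniform control, which is incompatible with the exponential growth of balls forced by degree $\geq 3$ and large girth. I expect the cleanest formulation is to show the $r_i$ have a uniform lower bound on their Cheeger-type constant fails but that a weaker isoperimetric obstruction to Property A — essentially the ``boundary-to-volume cannot vanish uniformly fast'' phenomenon for regular-ish large-girth graphs — still applies; alternatively, one invokes a known result that a sequence of finite connected graphs of bounded geometry with diameters going to infinity and a uniform lower degree bound is not coarsely amenable as a coarse disjoint union. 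Once that metric-space statement is in hand, the transfer to $G$ via the graphical embedding and the subspace permanence of Property A is routine, and the final sentence (coarse embeddability into Hilbert space) is immediate from the Haagerup property, since a-T-menable groups coarsely embed into Hilbert space.
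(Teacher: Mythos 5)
Your proposal follows the paper's proof essentially verbatim in structure: Haagerup property from the Main Theorem, coarse embeddability from the orbit map of the resulting proper affine isometric action, isometric embedding of the relators into the Cayley graph (the paper's Lemma~\ref{l:remb}), permanence of Property A under passing to subspaces, and a girth-based obstruction to Property A for the coarse disjoint union $\bigsqcup_i r_i$. The one point to fix is the final input, where you hedge between two formulations: the correct one is exactly Willett's theorem \cite{Willett}, which requires all vertex degrees between $3$ and a uniform upper bound \emph{and girth tending to infinity}; the alternative you float --- that bounded geometry, a lower degree bound, and diameters tending to infinity already suffice --- is false (e.g.\ products of long paths with a fixed finite graph have degree at least $3$, bounded geometry, and unbounded diameter, yet their coarse disjoint union is coarsely equivalent to a union of intervals and has Property A). The upper degree bound and the divergence of the girths are supplied by the lacunary walling condition (bounded degree of $X^{(1)}$ plus the small cancellation and lacunarity hypotheses), while the hypothesis ``all vertex degrees at least $3$'' is the one assumption added in the statement itself; with Willett's theorem cited in this precise form, your argument coincides with the paper's.
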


The lacunary walling condition (see Definition~\ref{d:lwpres}) assures some upper bound on all vertex degrees and girth tending to infinity as $i\to\infty$.
Coarse non-amenability of such a group $G$ follows then by a result of Willet~\cite{Willett} 
combined with Lemma~\ref{l:remb},
and the fact that coarse amenability is inherited by subspaces, see e.g.~\cite{NowakYu}*{Proposition 4.2.5}. 
The Haagerup property of $G$ follows by our Main Theorem and the coarse embedding into a Hilbert space is provided by an orbit map of the 
given proper affine isometric action.

The question above and the interplay of coarse amenability versus coarse embeddings and the Haagerup property are still
challenging  as an explicit construction of such a sequence  $(r_i)_{i\in \mathbb N}$ is yet an open problem. 
Specifically, the existence of a graphical small cancellation labeling (see the next Section for the terminology) is of great interest.

\begin{open}
Is there a sequence $(r_i)_{i\in \mathbb N}$ of graphs with all vertex degrees between 3 and some upper bound
such that
some graphical presentation with relators $(r_i)_{i\in \mathbb N}$  satisfies the $C'(1/6)$--small cancellation condition? 
\end{open}

A natural candidate for a sequence $(r_i)_{i\in \mathbb N}$  as in Theorem~\ref{t:nota} is
Arzhantseva-Guentner-\v{S}pakula's box space~\cite{AGS}. This space is the first example of a metric space with bounded geometry 
(indeed, of a regular graph) which coarsely embeds into a Hilbert space but which is not coarsely amenable. 
In~\cite{AGS}, a wall structure on this graph is provided. We show in Example 3 that this wall structure indeed satisfies 
our $\beta$--condition  and, after a slight variation of the construction, our
$\delta$--condition (Definition~\ref{d:sep}), both required by the lacunary walling condition.

In quest of a sequence of graphs $(r_i)_{i\in \mathbb N}$ as above, we prove the following general result, of independent interest,
as it provides the first -- explicit -- small cancellation labeling of an appropriate subdivision of any given sequence of bounded degree graphs. 

\begin{tw}\label{t:sc}
Let $\ov \Gamma = (\Gamma_i)_{i\in \mathbb N}$ be a family of finite graphs with degree bounded by $d$.
For every $\ov \lambda = (\lambda_i)_{i\in \mathbb N}$ with $\lambda_i\in (0,1)$, there exists a sequence $(j_i)_{i\in \mathbb N}$ of natural numbers with the following property.
There exists an explicit labeling of the family $(\Gamma_i^{j_i})_{i\in \mathbb N}$ of subdivisions by $d+2$ letters satisfying the $C'(\ov \lambda)$--small cancellation condition.
\end{tw}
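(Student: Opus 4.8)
The plan is to write the labeling down essentially by hand. Subdivide each edge of each $\Gamma_i$ into $j_i$ edges; on the resulting path of length $j_i$ we put a word of the shape
\[
W(e)\;=\;a_{\sigma_v(e)}\;M(e)\;a_{\sigma_u(e)},
\]
where the two $a$-letters record \emph{directions} at the two original endpoints $v,u$ of $e$ and $M(e)$ is a long word in the two remaining letters $b,c$; so the alphabet is $\{a_1,\dots,a_d,b,c\}$, which has $d+2$ elements. Concretely, fix once and for all a linear order on $V(\Gamma_i)$ and, for each vertex $w$, an injection $\sigma_w$ from the edges at $w$ into $\{1,\dots,d\}$ (possible since $\deg w\le d$); for $e=\{v,u\}$ with $v<u$ orient the subdivided path from $v$ to $u$ and label it by $W(e)$ with $|M(e)|=j_i-2$. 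Because each $\sigma_w$ is injective, at every original vertex the half-edges pointing away from it carry pairwise distinct letters and likewise for those pointing towards it, and at a degree-two subdivision vertex the two half-edges point oppositely; hence in all cases the labeling of $\bigsqcup_i\Gamma_i^{j_i}$ is an immersion, i.e.\ the labeled graph is reduced. It remains only to choose the $M(e)$ and the $j_i$ so that pieces are short.

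For the words $M(e)$ we use \emph{staircases}: enumerate the pairs $\{(i,e):i\in\mathbb N,\ e\in E(\Gamma_i)\}$ injectively, and let $M(e)$ be the prefix of length $j_i-2$ of the infinite word $b\,c^{p}\,b\,c^{p+1}\,b\,c^{p+2}\cdots$, where the starting exponent $p=p(i,e)$ is chosen — by an explicit recursion on the enumeration, always reserving a large enough gap above everything used so far — so that (a) $M(e)$ contains at least three complete $c$-blocks, and (b) the exponents of the complete $c$-blocks of $M(e)$ form an interval, and these intervals are pairwise disjoint as $(i,e)$ varies; a last, incomplete block may be replaced by a run of $c$'s to make the length exactly $j_i-2$. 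Two features drive the argument. First, the longest run of a single letter inside $W(e)$ has length $O\big(p(i,e)+j_i/p(i,e)\big)$, a quantity that, thanks to the freedom in choosing both $p(i,e)$ and $j_i$ in the recursion, can be kept below any prescribed fraction of $j_i$. Second, any subword of $M(e)$ containing three $b$'s exhibits a \emph{complete} block $c^{k}$ between two consecutive $b$'s, and this value $k$ both identifies the pair $(i,e)$ — it is the unique pair whose exponent-interval contains $k$, and no $b c^{k}b$ pattern arises in any other component or from the $a$-letters or the padding — and pins down the position of the subword inside $W(e)$, since exponents occur only once along the staircase. Reading $W(e)$ backwards turns increasing exponents into decreasing ones, so the same conclusions hold verbatim for $W(e)^{-1}$, and a word cannot be simultaneously a chunk of an increasing and of a decreasing staircase unless it contains at most two $b$'s.

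Now fix $i$, put $\varepsilon_i=\lambda_i/3$, and let $p$ be any piece: a path in $\bigsqcup_{i'}\Gamma_{i'}^{j_{i'}}$ that reads the same word as a second path via a label-preserving correspondence not induced by an automorphism; say $p$ lies in $\Gamma_i^{j_i}$. If some window of $p$ of length $\ge\varepsilon_i j_i$ lies inside a single $W(e)^{\pm1}$ — which happens whenever $p$ traverses a full subdivided original edge, or covers at least $\varepsilon_i j_i$ consecutive letters of one original edge — then, for $j_i$ large (so that $\varepsilon_i j_i$ exceeds thrice the longest single-letter run and hence contains three $b$'s), this window occurs only once in the whole disjoint union; combined with the immersion property, which propagates the position of the path one edge at a time, this forces the two paths defining $p$ to coincide, contradicting that $p$ is a piece. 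The only remaining case is that $p$ meets at most two original edges and covers fewer than $\varepsilon_i j_i$ letters of each, whence $|p|<2\varepsilon_i j_i+2$. Choosing $j_i$ larger than $6/\lambda_i$ (and large relative to the already-fixed data $j_1,\dots,j_{i-1}$ entering the exponents $p(i,\cdot)$) we get
\[
|p|\;<\;2\varepsilon_i j_i+2\;<\;\lambda_i j_i\;\le\;\lambda_i\,\gi(\Gamma_i^{j_i})\;\le\;\lambda_i\,|r|
\]
for every relator $r$ of $\Gamma_i$ containing $p$, since $\gi(\Gamma_i^{j_i})=j_i\,\gi(\Gamma_i)\ge j_i$ whenever $\Gamma_i$ has a cycle (and $\Gamma_i$ contributes no relators otherwise). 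This is exactly the graphical $C'(\ov\lambda)$ condition, and the construction — the $\sigma_w$, the enumeration, the exponents $p(i,e)$, the words $M(e)$, and the recursively defined $j_i$ — is explicit.

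The main obstacle is the bookkeeping in the last two steps: one must make the slogan ``a long enough subword of a staircase pins down the edge and its position'' precise enough to cover every way a piece can straddle the original vertices, including windows containing the direction letters $a_k$ or the padding $c$-runs, and one must organize the recursive choice of the $j_i$ so that the three relevant scales — the maximal single-letter runs $O(p(i,e)+j_i/p(i,e))$ coming from the staircases of $\Gamma_i$, the piece threshold $\varepsilon_i j_i$, and the girth $j_i\,\gi(\Gamma_i)\ge j_i$ — fall into the order required above with no circular dependence between $p(i,e)$ and $j_i$.
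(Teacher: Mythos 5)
Your construction is essentially the paper's: subdivide each edge, spend $d$ of the $d+2$ letters marking directions at the original vertices so the labeling stays folded, fill the interiors with increasing-block words over the remaining two letters whose sufficiently long subwords occur only once in the whole family, and choose the new parameters above everything used so far as you run inductively through the $\Gamma_i$, ending with the same estimate $|p|<\lambda_i j_i\leqslant \lambda_i\,\gi\,\Gamma_i^{j_i}$. The paper's words $a^nba^nb^2\cdots a^nb^k$ (with $n$ identifying the edge, the $b$-exponent identifying the position, and a small numerical claim making all segment lengths on one graph equal) play exactly the role of your staircases $bc^pbc^{p+1}\cdots$ with disjoint exponent intervals and terminal padding, so the two arguments differ only in implementation details.
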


Moreover, in Theorem~\ref{p:scsubl}, for a given infinite family of graphs we describe a method of labeling it (modulo a subdivision) 
in such a way that a stronger \emph{lacunarity} condition (see Definition~\ref{d:cond}) is satisfied.
This is then used to provide examples of non-classical infinite graphical small cancellation presentations of groups with the Haagerup property.

%%%
\medskip 

\noindent
{\bf Organisation.} 
In \S~\ref{s:prelim}, we define small cancellation complexes and graphical presentations.
In \S~\ref{s:wall}, we describe the structure of a space with walls on small cancellation complexes and introduce
the ($\beta,\delta$)--separation condition, required for the definition of the lacunary walling condition.
In \S~\ref{s:properness}, we define the lacunary walling condition and prove that a complex with the lacunary walling condition satisfies the linear separation property, Theorem~\ref{t:linear}. In \S~\ref{s:Haa}, we deduce Main Theorem. 
In \S~\ref{s:app}, we prove Theorem~\ref{t:sc}.
Then we give concrete examples of graphical small cancellation presentations that satisfy the hypothesis of our Main Theorem
and that do not reduce to classical small cancellation presentations. This is done in \S~\ref{s:ex}, where
we also discuss the box space from~\cite{AGS}. Finally, we show that our lacunary walling condition, crucial for the proof of Theorem~\ref{t:linear}, cannot be removed, see~\S~\ref{optim}.\smallskip

\medskip

\noindent
{\bf Acknowledgments.}  We thank Rufus Willett for comments on the Baum-Connes conjectures and Jan \v{S}pakula for the discussion on the box spaces in Example 3.

%%%%%%%%%%%%%%%%%%%%%%%%
\section{Small cancellation complexes}
\label{s:prelim}

%%%%%%%%%%
\subsection{\texorpdfstring{$C'(\lambda)$}{C'(lambda)}--complexes.}\label{s:c'cpl}
Here we describe the spaces that we  work on throughout this paper. 
Let $X^{(1)}$ be a graph. Our graphs have no loops neither multiple edges, and moreover, all our graphs have uniformly bounded degree.
Let $(\varphi_i \colon r_i \to \xj)_{i\in \mathbb N}$ be a family of local isometries of finite graphs $r_i$.
We call these finite graphs \emph{relators}.
We assume that $\varphi_i \neq \varphi_j$, for $i\neq j$.
The \emph{cone} over the relator $r_i$ is the quotient space $\mr{cone}\, r_i:=(r_i \times [0,1])/\{ (x,1) \sim (y,1)\}$.
The main object of our study is the \emph{coned-off space}:
\begin{align*}
X:=X^{(1)}\cup_{(\varphi_i)} \bigcup_{i\in \mathbb N} \mr{cone}\,r_i,  
\end{align*}
where $\varphi_i$ is the map $r_i\times \{ 0 \} \to \xj$.
We assume that $X$ is simply connected.
The space $X$ has a natural structure of a CW complex, or even of a simplicial complex, however we will not specify it.
Nevertheless, we usually call $X$ a ``complex". Throughout the article, if not specified otherwise, we consider the \emph{path metric}, denoted by $d(\cdot,\cdot)$, defined on the $0$--skeleton $X^{(0)}$ of $X$ by (combinatorial) paths in the $1$--skeleton $X^{(1)}$. \emph{Geodesics} are the shortest paths in $X^{(1)}$ for this metric.

\medskip
A path $p \hookrightarrow X^{(1)}$ is a \emph{piece} if there are relators $r_i,r_j$ such that $p\hookrightarrow X$ factors as $p \hookrightarrow r_i \stackrel{\varphi_i}{\longrightarrow} X$ and as $p\hookrightarrow r_j \stackrel{\varphi_j}{\longrightarrow}  X$, but there is no isomorphism $r_i \to r_j$ that makes the following diagram commutative.
$$  \begindc{\commdiag}[2]
\obj(12,1)[a]{$r_i$}
\obj(35,2)[b]{$X$}
\obj(35,1)[b']{}
\obj(35,16)[c]{$r_j$}
\obj(35,17)[c']{}
\obj(12,16)[d]{$p$}
\obj(12,17)[d']{}
\mor{a}{b'}{}
\mor{c}{b}{}
\mor{a}{c}{}
\mor{d'}{c'}{}
\mor{d'}{a}{}
\enddc  $$

\noindent
 This means that $p$ occurs in $r_i$ and $r_j$ in two essentially distinct ways.

Let $\lambda \in (0,1)$.
We say that the complex $X$ satisfies the \emph{$C'(\lambda)$--small cancellation condition} (or, shortly, the \emph{$C'(\lambda)$--condition}; or we say that $X$ is a \emph{$C'(\lambda)$--complex}) if every piece $p\hookrightarrow X$ factorizing through $p\hookrightarrow r_i \stackrel{\varphi_i}{\longrightarrow} X$ has length strictly less than $\lambda \cdot \gi\,r_i$. 

In this paper, we use the following stronger variant of the 
$C'(\lambda)$--small cancellation condition. Let $\overline {\lambda}:=(\lambda (r_i))_{i\in \mathbb N}$ be a vector with $\lambda (r_i) \in (0,1)$.
We say that the complex $X$ satisfies the \emph{$C'(\overline{\lambda})$--small cancellation condition} if 
every piece $p\hookrightarrow X$ factorizing through $p\hookrightarrow r_i \stackrel{\varphi_i}{\longrightarrow} X$ has length strictly less than $\lambda (r_i) \cdot \gi\,r_i$.

\begin{lem}[Relators embed]
\label{l:remb}
If $X$ is a $C'(1/24)$--complex, then the maps $\varphi_i\colon r_i \to X$ are isometric embeddings. 
\end{lem}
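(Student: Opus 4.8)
The plan is to show that each $\varphi_i\colon r_i\to X$ fails to be an isometric embedding only if some geodesic bigon in $X$ forces a relator to contain an over-long piece, contradicting the $C'(1/24)$--condition. More precisely, suppose $\varphi_i$ is not an isometric embedding; since $\varphi_i$ is a local isometry, this means there exist vertices $x,y$ in $r_i$ with $d_X(\varphi_i(x),\varphi_i(y)) < d_{r_i}(x,y)$. Pick $x,y$ realizing such a failure with $d_{r_i}(x,y)$ minimal; then any $r_i$--geodesic $\alpha$ from $x$ to $y$ maps to a path in $X$ that is \emph{not} a geodesic, while a genuine $X$--geodesic $\beta$ from $\varphi_i(x)$ to $\varphi_i(y)$ is strictly shorter. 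By minimality, $\alpha$ and $\beta$ share no interior vertices, so $\alpha\cup\beta$ bounds an embedded cycle (after small perturbation, a disc diagram) in $X$.

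Next I would invoke the structure theory of $C'(\lambda)$--complexes — Greendlinger's lemma / Gauss--Bonnet for small cancellation disc diagrams — to analyze a minimal-area van Kampen diagram $D$ over the presentation with boundary word $\alpha\bar\beta$. Since $\alpha$ lies in a single relator $r_i$, one can take $D$ to consist of a single $2$--cell (the $r_i$--cell) if $\alpha\bar\beta$ already bounds one; otherwise $D$ has at least one other $2$--cell, and Greendlinger's lemma provides a $2$--cell $R$ (other than the $r_i$--cell, or with suitable care, the $r_i$--cell itself) meeting $\partial D$ in a single arc $s$ whose complement in $\partial R$ is a concatenation of at most three pieces; with $\lambda \le 1/24$ this forces $s$ to be longer than, say, $(1 - 3\lambda)\,\gi\, R \ge \tfrac{7}{8}\gi\,R$ of $\partial R$. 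Tracking which part of $\partial D = \alpha\bar\beta$ the arc $s$ lies on, and using that $\alpha$ is an $r_i$--geodesic while $\beta$ is an $X$--geodesic, yields a length inequality: the shortcut $\beta$ would have to be at least as long as $\alpha$, contradicting $d_X(\varphi_i(x),\varphi_i(y)) < d_{r_i}(x,y)$.

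The main obstacle, and the step requiring the most care, is handling the van Kampen diagram when it contains the $r_i$--cell itself together with other cells: one must argue that either the diagram reduces (contradicting minimal area, since $\varphi_i$ is already assumed to be a local isometry and $r_i$ embeds as a graph) or Greendlinger's lemma applies to a $2$--cell whose outer arc is forced to overlap $\alpha$ in more than $\lambda\cdot\gi\,r_i$ of $\partial r_i$ — i.e.\ to be a piece that is too long. Quantitatively one checks that the constant $1/24$ is what makes the three-pieces estimate incompatible with $\alpha$ being geodesic in $r_i$ and $\beta$ being strictly shorter in $X$; a looser constant like $1/6$ suffices to build disc diagram theory but here the more generous $1/24$ absorbs the combinatorial overhead of comparing two metrics simultaneously. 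The remaining steps — that a local isometry which is injective on each ball of radius $\tfrac12\gi\,r_i$ and does not decrease distances is an isometric embedding — are routine once the no-shortcut inequality is established, so I would state them briefly and conclude.
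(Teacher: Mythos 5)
Your overall strategy --- reduce a failure of the isometric embedding to a disc diagram bounded by an $r_i$--geodesic $\alpha$ and an $X$--geodesic $\beta$, then apply a Greendlinger-type shell argument --- is exactly the machinery that the paper's one-line proof outsources to Wise: the citations to ``short innerpaths'' (Lemma 3.46 of the quasiconvex-hierarchy preprint), to the nonpositive curvature condition (Theorem 3.31), and to Lemma 5.5 there are precisely the disc-diagram statements you are trying to rederive. So this is not a different route; it is an attempt to prove the black box from scratch, and as such it has a genuine gap at the point you yourself flag as ``the main obstacle''.

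The gap is that Greendlinger's lemma in the form you quote is a statement about classical presentations, where every $2$--cell is a polygon whose boundary is a simple relator cycle and every interior arc of a reduced diagram is automatically a piece. For a graphical presentation the faces of a van Kampen diagram are attached along closed paths in the relator graphs $r_j$; such a path has length at least $\mathrm{girth}\,r_j$ but need not be simple, the notion of ``piece'' is the non-commuting-diagram condition of Section 2, and an arc shared by two faces (or an arc of $\alpha$ shared with a face $R$) fails to be a piece exactly when the two occurrences of the label differ by an isomorphism of relators. This is the case you must handle when the shell's outer arc lies on $\alpha\subset r_i$: if $R$ carries the same occurrence of $r_i$, the piece bound gives no contradiction, and you must instead show the diagram is not reduced, or collapse the face into $\alpha$ --- a reduction argument specific to graphical presentations, and precisely the content of Ollivier's Theorem 1 and Wise's Lemma 3.46, which the paper cites for exactly this reason. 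You name this obstacle but do not resolve it, so the case analysis does not close as written. A smaller point: your claim that $1/24$ is what makes the three-pieces estimate work is backwards --- the paper remarks that $C'(1/6)$ already suffices in the graphical setting, and $1/24$ appears only because the authors quote Wise's more general cubical results verbatim.
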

\begin{proof}
This follows from results in~\cite{W-qch} (see also \cite{W-ln}*{Chapter 10} or \cite{Oll}*{Theorem 1}). Indeed, by the proof of Lemma 3.46 there, $X$ has \emph{short innerpaths}, and by Theorem 3.31 the nonpositive curvature Condition 5.1.(2) is satisfied (see also \cite{W-ln}*{Lemma 9.12}). Thus, the claim follows from Lemma 5.5.
\end{proof}

In our case, of graphical small cancellation, the preceding result can be strengthened to $C'(1/6)$--complexes. We do not elaborate on this
since anyway we will use $C'(\lambda)$--complexes as $\lambda \to 0$.

%%%%%%%%
\subsection{\texorpdfstring{$C'(\lambda)$}{C'(lambda)}--presentations}\label{s:c_prime_gp}
%%%%%%%%
Let $Y^{(1)}$ be a finite graph and let $(\varphi_i \colon r_i \to Y^{(1)})_{i\in \mathbb N}$ be a family of local isometries of graphs. They form a
\emph{graphical presentation}
\begin{align}\label{eq:gpres} 
\langle Y^{(1)}\; | \; (r_i)_{i\in \mathbb N} \rangle,
\end{align}
defining a group $G:=\pi_1(Y^{(1)})/\langle\langle \pi_1(r_i)_{i\in \mathbb N} \rangle\rangle$, being the fundamental group of a coned-off space:
\begin{align*}
Y:=Y^{(1)}\cup_{(\phi_i)} \bigcup_{i\in \mathbb N} \mr{cone}\,r_i.  
\end{align*}
We say that $\langle Y^{(1)}\; | \; (r_i)_{i\in \mathbb N} \rangle$ is a \emph{$C'(\lambda)$--small cancellation presentation} (respectively, 
a \emph{$C'(\overline{\lambda})$--small cancellation presentation}) if the universal cover $X$ of $Y$, with the induced maps $(\varphi_i \colon r_i \to X^{(1)})_{i\in \mathbb N}$, is so.

%%%%%%%%
\subsection{Local-to-global density principle}\label{s:prel}
%%%%%%%%%

Here we provide a simple tool that allows us to conclude global properties of complexes from the local ones. Its proof can be found in~\cite{AO}.

Let $\gamma$ be a simple path in $X^{(1)}$.
For a subcomplex $B$ of $\gamma$, by $E(B)$ we denote the set of edges of $B$.
Let $\mathcal U$ be a family of nontrivial subpaths of $\gamma$, and let $A$ be a subcomplex of $\bigcup \mathcal U$ (that is, of the union $\bigcup_{U\in \mathcal U}U$).

\begin{lem}[Local-to-global density principle]
\label{l:lgd}
Assume that there exists $C\geqslant 0$, such that
\begin{align*}
  \frac{|E(A)\cap E(U)|}{|E(U)|}\geqslant C,
\end{align*}
for every $U\in \mathcal U$.
Then $|E(A)|\geqslant (C/2)|E(\bigcup \mathcal U)|$.
\end{lem}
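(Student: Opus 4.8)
\textbf{Proof proposal for Theorem~\ref{l:lgd} (Local-to-global density principle).}

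The plan is to prove the contrapositive in spirit: I want to produce a subfamily of $\mathcal{U}$ that ``efficiently covers'' $\bigcup \mathcal U$ with bounded overlap, and then transfer the local density bound through that subfamily. First I would invoke a standard covering/Vitali-type selection argument on the family $\mathcal U$ of subpaths of the simple path $\gamma$. Since $\gamma$ is a simple path, its edge set is linearly ordered, and each $U\in\mathcal U$ is an interval in this order; this is precisely the one-dimensional interval setting where a greedy selection works cleanly. Concretely, I would order the elements of $\mathcal U$ by their left endpoint (breaking ties by taking a longest one), and greedily pick a subfamily $\mathcal U'=\{U_1,U_2,\dots\}\subseteq\mathcal U$ such that $\bigcup\mathcal U'=\bigcup\mathcal U$ (as subcomplexes of $\gamma$) while no point of $\gamma$ lies in more than two of the $U_k$. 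The key combinatorial fact is that for intervals on a line one can always extract a cover of multiplicity at most $2$: whenever three chosen intervals had a common point, the middle one would be contained in the union of the outer two and could be discarded.

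Next I would use this bounded-overlap cover to estimate $|E(A)|$ from below. On one hand, because $\mathcal U'$ covers $\bigcup\mathcal U$ we have
\begin{align*}
|E(\textstyle\bigcup\mathcal U)| = |E(\textstyle\bigcup\mathcal U')| \leqslant \sum_{k} |E(U_k)|,
\end{align*}
and on the other hand, the multiplicity-$2$ property gives
\begin{align*}
\sum_k |E(A)\cap E(U_k)| \leqslant 2\,|E(A)|,
\end{align*}
since each edge of $A$ is counted at most twice on the left. Applying the hypothesis $|E(A)\cap E(U_k)|\geqslant C\,|E(U_k)|$ to each $U_k\in\mathcal U'\subseteq\mathcal U$ and combining the two displays yields
\begin{align*}
2\,|E(A)| \geqslant \sum_k |E(A)\cap E(U_k)| \geqslant C\sum_k |E(U_k)| \geqslant C\,|E(\textstyle\bigcup\mathcal U)|,
\end{align*}
which is the claimed inequality $|E(A)|\geqslant (C/2)\,|E(\bigcup\mathcal U)|$.

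The main obstacle, and the step deserving the most care, is the selection of $\mathcal U'$ with multiplicity at most $2$ while still covering all of $\bigcup\mathcal U$ — in particular handling the possibility that $\mathcal U$ is infinite or that many subpaths share endpoints, so that the greedy procedure must be set up to terminate or exhaust $\gamma$ correctly. One clean way is: process the finitely many edges of $\gamma$ contained in $\bigcup\mathcal U$ from left to right, and at each not-yet-covered edge choose a $U\in\mathcal U$ containing it that extends as far to the right as possible; a short argument then shows any edge is covered by at most two of the chosen intervals. (If $\mathcal U$ is infinite there need not be a ``maximal'' extension, but one can take extensions that reach within one edge of the supremum, which suffices — or simply note $E(\bigcup\mathcal U)$ is finite so only finitely many distinct left endpoints and right-reaches occur among relevant elements.) Everything else is bookkeeping with finite edge counts. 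I would also remark that the constant $2$ is not optimized and is all that is needed downstream.
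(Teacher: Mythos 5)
Your proof is correct, and it is essentially the argument the paper relies on: the paper defers the proof to \cite{AO}, where one likewise extracts a subfamily of intervals covering $\bigcup\mathcal U$ with multiplicity at most $2$ (there via a minimal subcover rather than your greedy selection, which is an equivalent device) and then sums the local density bounds exactly as you do. The factor $2$ arises for the same reason in both arguments, so no further changes are needed.
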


%===================================%%%%%%%%%%%%%%%%%%%%%%%%%%%%%%%%%%%%%%%%%]
\section{Walls}\label{s:wall}
%%%%%%%%%%%%%%%%%%%%%%%%%%%%%

Let $X$ be a $C'(\lambda)$--complex.
In this section, we equip the $0$--skeleton $X^{(0)}$ of $X$ with the structure of a space with walls $(X^{(0)}, \mathcal{W})$. We use a method of constructing walls from~\cite{W-qch}.
\medskip

Recall, cf.\ e.g.\ \cite{ChMV}, that for a set $Y$ and a family $\mathcal W$ of partitions (called \emph{walls}) of $Y$ into two parts, the pair $(Y,\mathcal W)$ is called a \emph{space with walls} if the following holds. For every two distinct points $x,y\in Y$ the number of walls separating $x$ from $y$ (called the \emph{wall pseudo-metric}), denoted by $\dw(x,y)$, is finite.
\medskip

Now we define walls for $\xz$. For a tentative abuse of notation we denote by ``walls" some subsets of edges of $\xj$, then showing that they indeed define walls.
Roughly speaking, two edges are in the same wall if they are ``opposite" in a relator, and then this relation is transitively closed.
This general idea goes back to the definition of walls for the classical small cancellation theory; see \cites{W-sc,AO} for finite and infinite classical small cancellation complexes, respectively.
However, since there is no notion of ``oppositeness" in a general graph relator $r_i$, we require certain assumptions on each graph $r_i$.

%%%%%%%%%%
\subsection{Defining walls}
\label{s:relwall}

A \emph{wall} in a graph $\Gamma$ is a collection $w$ of edges such that removing all open edges of $w$ decomposes $\Gamma$ in
exactly two connected components. In particular, it requires $\Gamma$ to be connected. We call $\Gamma$ a \emph{graph with walls}, if every edge belongs to a wall. 

If not stated otherwise, we assume that for a $C'(1/24)$--complex $X$, with given relators $r_i$, each graph $r_i$ is  a graph with walls. 
Observe that every $r_i$ is in fact an isometrically embedded subgraph of $X$, by Lemma~\ref{l:remb}.
Following \cite[Section 5]{W-qch} (see also \cite[Chapter 10]{W-ln}), we define walls in $\xj$ as follows: Two edges are in the same wall if they are in the same wall in some relator $r_i$. This relation is then 
extended transitively for all relators.

In general, the above definition may not result in walls for $\xj$ --- they might not decompose $\xj$, etc. We require some further assumptions on
walls in relators, which are formulated in the next section. Then, in Section~\ref{s:walls}, we prove that our definition of walls in $\xj$ makes sense, and
we explore further properties of such a system of walls.

%%%%%%%%%
\subsection{Separation property}
\label{s:sep}

\begin{de}[($\beta,\delta$)--separation]
\label{d:sep}
For $\beta \in (0,1/2]$ and $\delta \in (0,1)$ a graph $r$ with walls satisfies the \emph{($\beta,\delta$)--separation property}
if the following two conditions hold:
\medskip

\noindent
\emph{\underline{$\beta$--condition}}: for every two edges $e,e'$ in $r$ belonging to the same wall we have
\begin{align*}
d(e,e')+1\geqslant \beta \cdot \gi\,r.
\end{align*}

\noindent
\emph{\underline{$\delta$--condition}}: for every geodesic $\gamma$ in $r$, the number of edges in $\gamma$ whose walls meet $\gamma$ at 
least twice is at most $\delta\cdot |\gamma|$.
\medskip

A complex $X$ satisfies the \emph{($\beta,\delta$)--separation property} if every its relator does so.
\end{de}

There are other ways of defining an analogue of the $\beta$--condition above, which would make the $\delta$--condition 
unnecessary. However, one requires then large $\beta$, which is not convenient for providing examples.

%%%%%%%%%%%%%%%%%%%%%%%%%%%%%%%%%%%%%%%%%%

\subsection{Walls in $X$.}
\label{s:walls}

Let us show that a $C'(\lambda)$--complex $X$ satisfying the \bds  property, 
does possess the wall structure given by the walls as defined in Subsection~\ref{s:relwall}, for sufficiently small $\lambda\leqslant 1/24$.
We use results of Wise \cite[Section 5]{W-qch} (see also \cite[Chapter 10]{W-ln}). In particular, we have to check that $X$ satisfies the assumptions from~\cite{W-qch}. 
%We do not go into details concerning various notions appearing there. In fact, in our $1$--dimensional case,
%most of those notions are much simpler than in the general case treated in Wise's paper.

\begin{lem}[Generalized $B(6)$]
\label{l:b6}
Let $X$ be a complex satisfying the $\beta$--condition from Definition~\ref{d:sep}.
Then there exists $\lambda\leqslant 1/24$ with the following property.
If $X$ satisfies the $C'(\lambda)$--condition then 
$X$ satisfies the \emph{generalized $B(6)$ condition} of \cite[Definition 5.1]{W-qch}.
\end{lem}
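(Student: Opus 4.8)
The plan is to verify, condition by condition, that the generalized $B(6)$ definition from \cite[Definition 5.1]{W-qch} holds once $\lambda$ is chosen small relative to $\beta$. Recall that the generalized $B(6)$ condition bundles several requirements: (1) the $C'(\lambda)$-type metric small cancellation control on pieces (which is a hypothesis of the lemma, for suitable $\lambda$), (2) a bound on how walls (``hypergraphs'' in Wise's language) interact with pieces — typically that a piece in a relator $r_i$ meets each wall of $r_i$ at most once, or crosses few walls — and (3) nonpositive-curvature style conditions on the relators and on wall-carriers. So first I would fix notation: recall from Subsection~\ref{s:relwall} that walls in $X^{(1)}$ are the transitive closure of the ``same wall in some relator'' relation, and that by Lemma~\ref{l:remb} each $r_i$ embeds isometrically once $\lambda\leqslant 1/24$.

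The main step is to use the $\beta$--condition to control pieces versus walls inside a single relator. If $e,e'$ are two edges of a piece $p\hookrightarrow r_i$ lying on the same wall $w$ of $r_i$, then $d(e,e')+1\geqslant \beta\cdot\gi\, r_i$ by the $\beta$--condition, so $p$ has length at least $\beta\cdot\gi\, r_i -1$; but the $C'(\lambda)$--condition forces $|p| < \lambda\cdot\gi\, r_i$. Choosing $\lambda < \beta$ (indeed $\lambda$ a small fixed fraction of $\beta$, and $\lambda\leqslant 1/24$) gives a contradiction for relators of girth above a bound, and after possibly absorbing finitely many small relators, one concludes that no piece meets any wall of $r_i$ twice. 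This ``pieces are wall-embedded'' statement is exactly the combinatorial input Wise needs: it guarantees that wall-carriers cannot be shortened through pieces, which translates into the $B(6)$ requirements on hypergraph segments and their interaction with the $1$--skeleton. I would then cite \cite[Section 5]{W-qch} (see also \cite[Chapter 10]{W-ln}) for the deduction that the remaining clauses of Definition 5.1 — the short innerpaths / nonpositive curvature clauses — follow, just as in Lemma~\ref{l:remb}, from the $C'(\lambda)$--hypothesis with $\lambda\leqslant 1/24$, now with the extra wall-control clause supplied by the argument above.

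Concretely, the order of steps is: (i) set $\lambda$ to be, say, $\min\{\beta/2,\,1/24\}$ or a similarly explicit fraction, and note that since the $\beta$--condition is scale-invariant in $\gi\, r_i$ no girth lower bound is actually needed — the inequality $|p|<\lambda\cdot\gi\, r_i$ versus $|p|\geqslant \beta\cdot\gi\, r_i-1$ fails whenever $\beta\cdot\gi\, r_i - 1 \geqslant \lambda\cdot\gi\, r_i$, which holds for all but boundedly many girths, and for those one argues directly or notes the edges coincide; (ii) deduce that within each relator no piece revisits a wall; (iii) deduce the analogous statement for walls of $X^{(1)}$ restricted to a relator, using that a wall of $X$ meets $r_i$ in a (possibly empty) union of walls of $r_i$; (iv) check Wise's wall/hypergraph clauses of Definition 5.1 from (ii)–(iii); (v) invoke \cite{W-qch} for the nonpositive-curvature clauses exactly as in the proof of Lemma~\ref{l:remb}.

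The hard part will be step (iv): matching the precise formulation of the ``generalized $B(6)$ condition'' in \cite[Definition 5.1]{W-qch} — which is stated for Wise's cubical/hypergraph framework — to the present graphical small cancellation setup, making sure that the $\beta$--condition really supplies every clause about hypergraph segments not being carried by pieces, and that no additional hypothesis (beyond $\beta$ and $C'(\lambda)$) is silently needed. I expect the write-up to be mostly a careful translation and bookkeeping exercise rather than a new idea, with the genuine mathematical content concentrated in the one-line clash between $\beta\cdot\gi\, r_i$ and $\lambda\cdot\gi\, r_i$.
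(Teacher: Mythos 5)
Your proposal is correct and follows essentially the same route as the paper's (much terser) proof: both reduce to checking Wise's clauses of \cite[Definition 5.1]{W-qch} one at a time, deferring the coned-off-space and nonpositive-curvature clauses to the setup and to the argument of Lemma~\ref{l:remb}, with the only substantive point being that the separation clauses follow from the clash between $d(e,e')+1\geqslant\beta\cdot\gi\,r$ and $|p|<\lambda\cdot\gi\,r$ for $\lambda$ small relative to $\beta$. Your extra bookkeeping in steps (i)--(iv) is just a more explicit version of the paper's one-line assertion that conditions (4) and (5) are implied by the $\beta$--condition together with the $C'(\lambda)$--condition for $\lambda$ sufficiently small.
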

\begin{proof}
Condition (1) of \cite[Definition 5.1]{W-qch} follows immediately from our definition of the coned-off space $X$. Condition
(2) follows from the $C'(1/24)$--condition (see the proof of Lemma~\ref{l:remb} above).
Conditions (3) and (6) follow from our definition of walls in graph relators.
For a given $\beta$ the conditions (4) and (5) are implied by the $\beta$--condition together with the $C'(\lambda)$--condition, provided $\lambda$ is sufficiently small.
\end{proof}

For the rest of this subsection we assume that $X$ satisfies the $\beta$--condition, for some $\beta\in (0,1)$, and 
the $C'(\lambda)$--small cancellation condition for $\lambda$ as in Lemma~\ref{l:b6}.
With this fact in hand we  use~\cite[Section 5]{W-qch} in our setting.

\begin{lem}[{\cite[Remark 5.24]{W-qch}} and {(\cite[Theorem 10.1]{W-ln})}]
  \label{l:wallsep}
  Removing all open edges from a given wall decomposes $\xj$ into exactly two connected components.
\end{lem}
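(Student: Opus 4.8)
The plan is to follow the strategy of Wise from \cite[Section 5]{W-qch}, once we have verified the generalized $B(6)$ condition via Lemma~\ref{l:b6}. The key observation is that our definition of walls in $\xj$ makes each wall the ``trace'' on $X^{(1)}$ of a genuine hyperplane-like object in the dual object built from the $B(6)$ structure; the content of the lemma is that this trace is a bona fide wall, i.e.\ that removing its open edges leaves exactly two connected components. So first I would recall the relevant machinery from \cite{W-qch}: under the generalized $B(6)$ condition, to each wall $w$ (an equivalence class of edges under the transitive closure of the ``same wall in some relator'' relation) one associates its \emph{carrier}, the union of the cones over relators $r_i$ containing an edge of $w$, together with the edges of $w$ themselves. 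One then analyzes how the wall sits inside each relator and how consecutive relators along the wall are glued.

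Second, I would establish that the wall does not self-cross, i.e.\ that within any single relator $r_i$ the edges of $w$ lying in $r_i$ all belong to a single wall of the \emph{graph} $r_i$ in the sense of Subsection~\ref{s:relwall} — this is essentially immediate from the definition, since two edges of $r_i$ are identified in $\xj$ only when they are already in a common wall of $r_i$, and a wall of a graph with walls by definition separates that graph into exactly two components. The transitive closure over different relators then glues these local pictures along pieces. Here the $C'(\lambda)$--small cancellation condition (with $\lambda$ as in Lemma~\ref{l:b6}) and the $\beta$--condition are what prevent a wall from ``wrapping around'' and reconnecting the two sides: a wall travelling from $r_i$ into $r_j$ via a piece $p$ has the property that, by the $\beta$--condition, the two edges of $w$ in $r_i$ are far apart (at least $\beta \cdot \gi\, r_i - 1$), so the piece $p$ cannot contain both of them, which rules out the degenerate gluings. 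This is exactly the point where \cite[Section 5]{W-qch} applies verbatim once $B(6)$ is known, so I would invoke \cite[Remark 5.24]{W-qch} (equivalently \cite[Theorem 10.1]{W-ln}) rather than rebuild the argument.

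Third, for the complement having \emph{at least} two components: fix an edge $e \in w$, lying in some relator $r_i$; removing the edges of $w$ from $r_i$ separates $r_i$ into two components by the definition of a graph with walls, and the simple connectedness of $X$ together with the $B(6)$ condition (specifically, that pieces are short relative to girth, so relators attach in a tree-like, nonpositively-curved fashion) propagates this separation globally — a path in $X^{(1)}$ from one side of $e$ to the other would have to cross some edge of $w$, because a ``disc diagram'' argument shows any such path can be homotoped to run along wall carriers and must traverse the wall an odd number of times. For \emph{at most} two components, one shows every edge of $\xj$ not in $w$ lies on one definite side: again a disc diagram / parity argument using that $X$ is simply connected and $B(6)$ holds. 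I would state both directions as consequences of the cited results, noting that all their hypotheses are secured by Lemma~\ref{l:b6}.

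The main obstacle — or rather, the only real work — is the honest verification that our $\beta$--condition on graph relators genuinely implies Wise's conditions (4) and (5) of \cite[Definition 5.1]{W-qch} for small $\lambda$, but that has already been dispatched in Lemma~\ref{l:b6}; given that lemma, the present statement is a direct citation of \cite[Remark 5.24]{W-qch} and \cite[Theorem 10.1]{W-ln}. Thus the proof is short:

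\begin{proof}
By Lemma~\ref{l:b6}, the complex $X$ satisfies the generalized $B(6)$ condition of \cite[Definition 5.1]{W-qch}. The walls in $\xj$ defined in Subsection~\ref{s:relwall} are precisely the walls associated to the $B(6)$ structure in \cite[Section 5]{W-qch}. The assertion is therefore \cite[Remark 5.24]{W-qch} (see also \cite[Theorem 10.1]{W-ln}).
\end{proof}
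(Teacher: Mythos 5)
Your proof takes exactly the route the paper does: the paper states this lemma as a direct citation of \cite[Remark 5.24]{W-qch} and \cite[Theorem 10.1]{W-ln}, with the hypotheses secured by the standing assumption (established via Lemma~\ref{l:b6}) that $X$ satisfies the generalized $B(6)$ condition. Your concluding three-line proof is precisely the paper's justification, and the surrounding discussion of Wise's machinery, while not required, is accurate.
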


Thus, we define the family $\mathcal W$ for $\xz$ as the partitions of $\xz$ into sets of vertices in the connected components described by the lemma above.

\begin{prop}
  \label{p:sww}
  With the system of walls defined as above, $(\xz,\mathcal W)$ is a space with walls.
\end{prop}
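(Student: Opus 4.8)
\textbf{Proof plan for Proposition~\ref{p:sww}.}
The plan is to verify the single nontrivial axiom of a space with walls: that for every pair of distinct vertices $x,y\in\xz$ the number $\dw(x,y)$ of walls separating them is finite. Since walls in $\xj$ are unions of edges closed under the ``oppositeness in a relator'' relation, and since removing the open edges of a wall disconnects $\xj$ into exactly two components by Lemma~\ref{l:wallsep}, a wall $w$ separates $x$ from $y$ if and only if every edge-path in $\xj$ from $x$ to $y$ crosses $w$. In particular, fixing one geodesic $\gamma$ from $x$ to $y$, every separating wall must contain at least one edge of $\gamma$. Hence $\dw(x,y)$ is bounded above by the number of distinct walls meeting $\gamma$, which is at most $|\gamma|=d(x,y)<\infty$. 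This already gives finiteness, and so $(\xz,\mathcal W)$ is a space with walls.

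The step that needs the most care is the justification that ``separating'' implies ``meets $\gamma$'': this is exactly where Lemma~\ref{l:wallsep} is used, since a priori the relation defining walls is transitively closed across infinitely many relators and one must know the resulting equivalence classes of edges genuinely partition $\xj$ into two sides. Under the standing hypotheses of this subsection ($\beta$--condition together with the $C'(\lambda)$--condition for $\lambda$ as in Lemma~\ref{l:b6}), Lemma~\ref{l:b6} places us inside Wise's generalized $B(6)$ framework of \cite[Section 5]{W-qch}, and Lemma~\ref{l:wallsep} is precisely the output of that framework; so the proof is essentially an invocation of these two lemmas plus the elementary geodesic-crossing count above. I would also remark that the partition associated to each wall is well defined on vertices (no vertex lies ``on'' a wall, as walls consist of edges), so $\mathcal W$ really is a family of partitions of $\xz$ into two parts, as required by the definition recalled in Section~\ref{s:wall}.

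I expect no serious obstacle here; the content of the proposition has been front-loaded into Lemmas~\ref{l:b6} and~\ref{l:wallsep}, and what remains is the short observation that a geodesic witnesses an upper bound on the number of separating walls. The only thing to be slightly careful about is not to conflate ``a wall meets $\gamma$'' with ``a wall separates the endpoints of $\gamma$'' in the other direction — we only need the easy implication (separating $\Rightarrow$ meeting), and for that the two-sided decomposition of Lemma~\ref{l:wallsep} is enough.
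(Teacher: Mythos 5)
Your argument is correct and is essentially the paper's own proof: the paper simply observes that any path in $\xj$ joining the two vertices must cross every separating wall (this uses Lemma~\ref{l:wallsep}), so the number of separating walls is bounded by the length of the path. Your version, using a geodesic and spelling out the role of Lemmas~\ref{l:b6} and~\ref{l:wallsep}, is just a more detailed rendering of the same one-line observation.
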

\begin{proof}
  Since, for any two vertices, there exists a path in $\xj$ connecting them, we get that the number of walls separating those two vertices is finite.
\end{proof}

We recall further results on walls that will be extensively used in Section~\ref{s:properness}.

For a wall $w$, its \emph{hypergraph} $\Gamma_w$ is a graph defined as follows (see \cite[Definition 5.18]{W-qch} and \cite{W-sc}). 
There are two types of vertices in $\Gamma_w$ (see e.g.\ Figure~\ref{f:C}):
\begin{itemize}
\item
\emph{edge-vertices} correspond to edges in $w$,
\item
\emph{relator-vertices} correspond to relators containing edges in $w$.  
\end{itemize}
An \emph{edge} in $\Gamma_w$ connects an edge-vertex with a relator-vertex whenever the corresponding
relator contains the given edge.

\begin{lem}[{\cite[Theorem 5.19]{W-qch}}]
\label{l:hypergraph}
Each hypergraph is a tree.
\end{lem}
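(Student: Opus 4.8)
This is Wise's theorem \cite[Theorem~5.19]{W-qch}, and the plan is to reproduce its proof in the slightly more general framework of Subsection~\ref{s:relwall}, where ``oppositeness'' in a relator is replaced by ``lying in a common wall of the relator''. The hypergraph $\Gamma_w$ is bipartite, with edge-vertices (the edges of $w$) and relator-vertices (the relators meeting $w$), so a tree means connected and acyclic. \emph{Connectedness} is immediate from the construction of walls: the wall $w$ is, by definition, an equivalence class for the transitive closure of the relation ``$e,e'$ lie in a common wall of some relator $r_i$'', so any two edges of $w$ are linked by a finite chain $e=f_0, r_{i_1}, f_1, \dots, f_m=e'$ with $f_{k-1},f_k$ in a common wall of $r_{i_k}$; such a chain is precisely an edge-path in $\Gamma_w$. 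It thus remains to show that $\Gamma_w$ contains no embedded cycle.

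Suppose for contradiction that $c\subset \Gamma_w$ is an embedded cycle, passing through relator-vertices $r_{i_1},\dots,r_{i_n}$ and, between consecutive ones, through edge-vertices $e_1,\dots,e_n$, where $e_k$ and $e_{k+1}$ lie in the same wall of $r_{i_k}$ (indices mod $n$). Since each $r_{i_k}$ is a graph with walls, this wall decomposes $r_{i_k}$ into exactly two components, and by Lemma~\ref{l:remb} each $r_{i_k}$ embeds isometrically in $X$; fix a combinatorial path $\gamma_k$ in $r_{i_k}$ joining an endpoint of $e_k$ to an endpoint of $e_{k+1}$. Concatenating the $\gamma_k$ (adjusting at the shared edges $e_k$) yields a closed edge-path $\hat c$ in $\xj$. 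As $X$ is simply connected, $\hat c$ is null-homotopic and hence bounds a disc diagram $D\to X$; choose one of minimal area, i.e.\ with the least number of cone-cells.

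The main step — and the main obstacle — is the standard minimal-disc-diagram argument run through the generalized $B(6)$ condition of Lemma~\ref{l:b6}. If $D$ has no cone-cells, then $\hat c$ bounds a disc in $\xj$; pulling this back into the relators forces the union of the wall-arcs of $w$ inside a single relator (or a short chain of relators) to close up, contradicting that each $w\cap r_{i_k}$ is a genuine wall and, by Lemma~\ref{l:wallsep}, contributes a forest rather than a cycle to $\Gamma_w$. Otherwise, $B(6)$ yields a shell cone-cell $r$ of $D$ whose intersection $\sigma$ with $\partial D=\hat c$ is a concatenation of at most three pieces. But $\sigma$ contains a subpath joining two edges of $w$ lying in the same wall of $r$, so by the $\beta$--condition its length is at least $\beta\cdot\gi\, r-1$, while each of the $\leqslant 3$ pieces has length $<\lambda\cdot\gi\, r$, giving $|\sigma|<3\lambda\cdot\gi\, r$. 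Since Lemma~\ref{l:b6} already forces $\lambda$ small enough that $3\lambda<\beta$, this is a contradiction. Hence no such $D$ exists, so $\Gamma_w$ has no embedded cycle and, being connected, is a tree.

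I expect the delicate point to be the bookkeeping in the last paragraph: correctly identifying the external arc of the shell cone-cell as the concatenation of boundedly many pieces in the \emph{generalized} (wall-)$B(6)$ setting, and verifying that the wall-arcs $\gamma_k$ inside the cones do not introduce spurious crossings in $D$ (so that the minimal diagram genuinely detects a contradiction), exactly as in \cite[Section~5]{W-qch} and \cite[Chapter~10]{W-ln}. Everything else is formal given Lemmas~\ref{l:remb},~\ref{l:b6} and~\ref{l:wallsep}.
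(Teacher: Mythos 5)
The paper offers no proof of this lemma: it is quoted verbatim from Wise, the point of the surrounding text being only that Lemma~\ref{l:b6} verifies the generalized $B(6)$ hypotheses under which \cite[Theorem 5.19]{W-qch} applies. Your proposal ultimately does the same thing --- your last paragraph defers the genuinely delicate verifications to \cite[Section 5]{W-qch} --- so strategically you and the authors agree. The problem is that the sketch you wrap around that citation does not hold up as written.

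Two concrete gaps. First, your model of a cycle in $\Gamma_w$ is too special: an edge of $\Gamma_w$ joins an edge-vertex $e$ to a relator-vertex $r$ whenever $r$ merely \emph{contains} $e$; it is not required that consecutive edge-vertices $e_k,e_{k+1}$ around $r_{i_k}$ lie in a common wall \emph{of} $r_{i_k}$ (the shortest possible cycle $e,r,e',r',e$ only says that $e,e'\in w$ both lie in $r\cap r'$). Consequently the step where you bound $|\sigma|$ below by $\beta\cdot\gi\,r-1$ via the $\beta$--condition is unjustified: the two edges of $w$ met by the shell need not be wall-related inside that shell. Second, the upper bound $|\sigma|<3\lambda\cdot\gi\,r$ is also unjustified: the boundary $\hat c$ of your diagram is assembled from arcs $\gamma_k$ lying \emph{inside} the relators $r_{i_k}$, so when the shell is a copy of some $r_{i_k}$ its outer path contains subpaths of $\gamma_k$ that are not pieces at all. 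Wise's Greendlinger-type lemma for generalized $B(6)$ complexes produces shells whose outer paths decompose into pieces \emph{and} prescribed boundary arcs, and the contradiction is extracted from conditions (4) and (5) of \cite[Definition 5.1]{W-qch} --- exactly the conditions that the $\beta$--condition is used to verify in Lemma~\ref{l:b6} --- not from a bare piece count. The cone-cell-free case is likewise only gestured at. Since the paper deliberately does not reprove Wise's theorem, the clean course is to state the lemma as a citation after checking the $B(6)$ hypotheses, as the authors do; a self-contained proof would require substantially more care at precisely the points you flagged.
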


The \emph{hypercarrier} of a wall $w$ is the $1$--skeleton of the subcomplex of $X$ consisting of all relators containing edges in $w$ or of a single edge $e$ if $w=\{ e \}$.

\begin{tw}[{\cite[Corollary 5.34]{W-qch}}]
  \label{l:carrconv}
  Each hypercarrier is a convex subcomplex of $\xj$, that is, any geodesic connecting vertices of a hypercarrier is contained in this hypercarrier.
\end{tw}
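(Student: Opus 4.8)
The statement to prove (Theorem~\ref{l:carrconv}, quoted from Wise) asserts that every hypercarrier is a convex subcomplex of $X^{(1)}$. The plan is to prove convexity by a contradiction argument that exploits the tree structure of hypergraphs (Lemma~\ref{l:hypergraph}) together with the small cancellation geometry, following the template of Wise's $B(6)$ theory.

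First I would set up the contradiction: suppose $\gamma$ is a geodesic in $X^{(1)}$ with endpoints in the hypercarrier $Y_w$ of a wall $w$, but $\gamma$ is not contained in $Y_w$. Choose $\gamma$ to be a minimal-length such geodesic, so that $\gamma$ leaves $Y_w$ immediately after its initial vertex and returns only at its terminal vertex; thus $\gamma$ together with a path inside $Y_w$ bounds a disc diagram $D$, which we may take to be reduced (no two relators of $D$ cancel along an edge), and then minimal among all such diagrams in the usual sense. The boundary $\partial D$ decomposes as $\gamma \cup \sigma$ where $\sigma$ lies in $Y_w$.

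Next I would run the standard $B(6)$ / Greendlinger-type analysis on $D$. By Lemma~\ref{l:b6} the complex satisfies Wise's generalized $B(6)$ condition, so the combinatorial Gauss--Bonnet argument applies to $D$: a reduced diagram over a $B(6)$ presentation has at least one ``shell'' (a relator meeting $\partial D$ in a path that is the concatenation of at most three pieces, hence a long outer arc) or a ``spur'', unless $D$ is very degenerate (a single relator or a single edge). A spur contradicts $\gamma$ being geodesic and $\sigma \subseteq Y_w$ being reduced. A shell $R$ with its outerpath along $\gamma$ would let us shorten $\gamma$ using the innerpath, since the outerpath is more than half the girth of $R$ while the innerpath is a concatenation of few pieces, each shorter than $\lambda \cdot \gi\, r$; this contradicts geodesy of $\gamma$ for $\lambda$ small. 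So the shell $R$ must have its outerpath along $\sigma$, i.e. along $Y_w$. But then $R$ shares a long arc (longer than $3$ pieces' worth, in particular containing an edge not a piece) with a relator of $Y_w$ --- and by the small cancellation condition a subpath longer than $\lambda \cdot \gi$ that occurs in two relators forces those relators to be identified, so $R$ is itself one of the relators of $Y_w$; then $R \subseteq Y_w$, and removing $R$ from $D$ contradicts minimality of the diagram (or of $\gamma$). Iterating, $D$ collapses, forcing $\gamma \subseteq Y_w$.

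The point where the tree structure of the hypergraph enters, and which I expect to be the main obstacle, is ensuring that ``$R$ is a relator of $Y_w$'' really follows rather than merely ``$R$ shares a long subpath with $Y_w$''. One has to argue that a relator glued along a long piece to a relator containing an edge of $w$ either contains an edge of $w$ itself (hence lies in $Y_w$ by definition), or else its edges carried along by the wall would create a cycle in the hypergraph $\Gamma_w$, contradicting Lemma~\ref{l:hypergraph} --- this is exactly the role the no-cycle property plays in Wise's Corollary 5.34. Making this dichotomy precise, and checking that the long shared arc is long enough (in terms of $\beta$ and $\lambda$) to both trigger the small cancellation identification and to meet the wall appropriately, is the delicate bookkeeping. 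Since all of the ingredients --- reduced diagrams over $B(6)$ presentations, short innerpaths, Greendlinger's lemma for $B(6)$, and the hypergraph-is-a-tree fact --- are available from \cite{W-qch} in exactly the generality set up in Lemma~\ref{l:b6}, the cleanest route is to simply invoke \cite[Corollary 5.34]{W-qch} verbatim: our Lemma~\ref{l:b6} places $X$ within the hypotheses of that section, and Lemma~\ref{l:hypergraph} is \cite[Theorem 5.19]{W-qch}, so convexity of hypercarriers is \cite[Corollary 5.34]{W-qch} applied to our $X$.

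\begin{proof}
By Lemma~\ref{l:b6}, the complex $X$ satisfies the generalized $B(6)$ condition of \cite[Definition~5.1]{W-qch}, so all the results of \cite[Section~5]{W-qch} apply to $X$. Hypercarriers in the sense of our definition coincide with those of \cite{W-qch}, and by Lemma~\ref{l:hypergraph} (which is \cite[Theorem~5.19]{W-qch}) each hypergraph is a tree. The statement is therefore \cite[Corollary~5.34]{W-qch}.
\end{proof}
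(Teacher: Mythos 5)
Your proof is correct and matches the paper's treatment: the paper states this result as a direct quotation of \cite[Corollary 5.34]{W-qch}, justified by having verified (via Lemma~\ref{l:b6}) that $X$ satisfies the generalized $B(6)$ condition so that all of \cite[Section 5]{W-qch} applies. The diagrammatic sketch you include is not needed; the citation argument in your final proof block is exactly what the paper does.
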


The following result is implicit in \cite[Section 5]{W-qch}, and formally it follows from Lemmas~\ref{l:hypergraph} \& \ref{l:carrconv} above, and from
\cite[Lemma 5.9]{W-qch}.

\begin{cor}
\label{l:relcon}
Relators are convex subcomplexes of $\xj$.
\end{cor}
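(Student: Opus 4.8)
The plan is to derive Corollary~\ref{l:relcon} from the tree structure of hypergraphs (Lemma~\ref{l:hypergraph}), the convexity of hypercarriers (Theorem~\ref{l:carrconv}), and \cite[Lemma 5.9]{W-qch}. First I would recall the relevant statement of \cite[Lemma 5.9]{W-qch}: it asserts (in Wise's terminology for $B(6)$-complexes) that the intersection of a hypercarrier with the relator it passes through behaves controllably, and more precisely that a relator $r_i$ together with all hypercarriers of walls it contains forms a configuration in which geodesics cannot ``escape'' $r_i$. Since by Lemma~\ref{l:remb} each $\varphi_i\colon r_i\to X$ is an isometric embedding, $r_i$ is already an isometrically embedded subgraph of $X^{(1)}$; what remains is to upgrade this to \emph{convexity}, i.e.\ that every geodesic of $X^{(1)}$ joining two vertices of $r_i$ lies entirely inside $r_i$.

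The key step is the following argument by contradiction. Suppose $\gamma$ is a geodesic in $X^{(1)}$ with endpoints in $r_i$ but with $\gamma\not\subseteq r_i$. Then $\gamma$ leaves and re-enters $r_i$, so there is a maximal subpath $\gamma'$ of $\gamma$ with interior disjoint from $r_i$ and both endpoints $x,y$ on $r_i$. Pick any edge $e$ of $r_i$ incident to $x$ lying ``between'' $\gamma'$ and $r_i$; it belongs to some wall $w$, and one considers the hypercarrier $N$ of $w$. The point is that $N$ is convex (Theorem~\ref{l:carrconv}) and contains a portion of $r_i$ near $x$; using Lemma~\ref{l:hypergraph} (the hypergraph of $w$ is a tree, so $w$ meets $r_i$ in a connected piece and each relator in the hypercarrier meets it in a single edge) together with \cite[Lemma 5.9]{W-qch}, one shows the hypercarrier $N$ actually contains the relevant arc of $\gamma$. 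Iterating, or choosing $w$ appropriately, one gets that the excursion $\gamma'$ is forced to stay in a hypercarrier whose intersection pattern with $r_i$ (again via \cite[Lemma 5.9]{W-qch}) is too small to allow $\gamma'$ to be geodesic while leaving $r_i$ — the ``shortcut'' through $r_i$ would be strictly shorter, contradicting that $\gamma$ is a geodesic. This is essentially Wise's proof of convexity of pieces/relators in the $B(6)$ setting, transported verbatim to our generalized $B(6)$ complexes via Lemma~\ref{l:b6}.

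Concretely, the steps in order are: (1) invoke Lemma~\ref{l:b6} so that all of Wise's Section~5 machinery applies to $X$; (2) recall that relators are isometrically embedded (Lemma~\ref{l:remb}); (3) quote \cite[Lemma 5.9]{W-qch} describing how a relator intersects the hypercarriers of its own walls; (4) given a geodesic $\gamma$ with endpoints in $r_i$, use the tree structure (Lemma~\ref{l:hypergraph}) and hypercarrier convexity (Theorem~\ref{l:carrconv}) to trap any excursion of $\gamma$ outside $r_i$ inside a single hypercarrier; (5) use the $C'(\lambda)$-condition (pieces are short relative to girth) plus \cite[Lemma 5.9]{W-qch} to see that such a trapped excursion admits a strictly shorter replacement inside $r_i$, contradicting geodesy. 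Hence $\gamma\subseteq r_i$, which is the claim.

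I expect the main obstacle to be purely expository: the statement is ``implicit in \cite[Section 5]{W-qch}'', so the real work is identifying the precise combination of Wise's Lemma~5.9, Theorem~5.19, and Corollary~5.34 that yields convexity of relators, and checking that the generalized $B(6)$ hypotheses granted by Lemma~\ref{l:b6} suffice to run Wise's argument unchanged. There is no new geometric idea required beyond what is already packaged in Theorem~\ref{l:carrconv} and Lemma~\ref{l:hypergraph}; in particular, since a single edge $e$ with $w=\{e\}$ has hypercarrier equal to $e$ itself and a relator is one of the ``large'' hypercarriers of each of its walls, the convexity of hypercarriers already contains the convexity of relators as a special case, and the formal deduction is then a one-line application of \cite[Lemma 5.9]{W-qch}.
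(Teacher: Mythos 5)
The paper offers no argument for this corollary beyond the attribution you also rely on: it is declared implicit in \cite[Section 5]{W-qch} and said to follow formally from Lemma~\ref{l:hypergraph}, Theorem~\ref{l:carrconv}, and \cite[Lemma 5.9]{W-qch}. Your middle paragraphs reconstruct a plausible such deduction --- trap any excursion of a geodesic outside $r_i$ inside a convex hypercarrier, then use the tree structure of the hypergraph and the shortness of pieces to replace the excursion by a strictly shorter path through $r_i$ --- and this matches the route the authors indicate.

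Your closing remark, however, is wrong and would, if true, make \cite[Lemma 5.9]{W-qch} superfluous: a relator is \emph{not} ``one of the large hypercarriers of each of its walls'', so convexity of relators is not a special case of Theorem~\ref{l:carrconv}. The hypercarrier of a wall $w$ is the union of \emph{all} relators containing edges of $w$, and since walls in $\xj$ are obtained by transitively extending the oppositeness relation across relators, the wall through an edge of $r_i$ generically runs through many other relators; hence $r_i$ is a proper subcomplex of every hypercarrier containing it. Theorem~\ref{l:carrconv} therefore only confines a geodesic between two vertices of $r_i$ to each such hypercarrier, not to $r_i$ itself. The passage from the hypercarrier down to the single relator is precisely where the tree structure of $\Gamma_w$ and the smallness of intersections of distinct relators (pieces, i.e.\ the $C'(\overline{\lambda})$--condition, packaged in \cite[Lemma 5.9]{W-qch}) must enter --- which is exactly what your earlier, more careful sketch does. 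Keep that version and delete the final ``one-line special case'' claim; as a smaller point, note also that a relator-vertex of $\Gamma_w$ may be adjacent to several edge-vertices, so a relator in the hypercarrier need not meet $w$ in a single edge.
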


\section{Linear separation property}
\label{s:properness}
\noindent
In this section, we perform the main step toward Main Theorem, namely, we prove Theorem~\ref{p:lsp} below (Theorem~\ref{t:linear} from Introduction). This implies the properness of the wall pseudo-metric and it is the most involved part of the paper (cf.\ also Remark after Definition~\ref{d:cond} and Section~\ref{optim} below). 
\medskip

\emph{From now on, unless stated otherwise, each complex $X$ considered in this paper has relators $(r_i)_{i\in \mathbb N}$ being graphs with walls, and satisfies the following lacunary walling condition}.

\begin{de}[{Lacunary walling}]
\label{d:cond}
Let $\beta \in (0,1/2], \delta \in (0,1), M\in (0,1), K >0$,  and let $k$ be a natural number larger than $1$. 
Let $\overline{\lambda}=(\lambda (r_i))_{i\in \mathbb N}$ be a vector with $\lambda (r_i)\leqslant \lambda$, where $\lambda< \beta/2$ is the constant
from Lemma~\ref{l:b6} (that is, such that $X$ satisfies the generalized $B(6)$ condition). 
We denote by $b_{r_i}(t)$ the maximal number of edges in a ball of radius
$t$ in the graph $r_i$. We say that $X$ satisfies the \emph{lacunary walling condition} if:
 
\begin{itemize}
\item
$X^{(1)}$ has degree bounded by $k$;
\item
(Small cancellation condition) $X$ satisfies the $C'(\overline{\lambda})$--condition;
\item
(Lacunarity) $b_{r_i}(\lambda (r_i) \cdot \gi\,r_i) \leqslant K \cdot \gi\,r_i$;
\item
(Separation condition) $X$ satisfies the \bds property;
\item
(Compatibility) $(1-\delta)(\beta - \lambda(r_i)) - 2K - 4\lambda(r_i) \geqslant M \cdot (\beta - \lambda(r_i))$.
\end{itemize}  
\end{de}

Observe that this definition makes sense, that is, there are choices of all the constants and functions above
satisfying the given constraints. 
To see this, note that (in the compatibility condition):
\begin{align*}
(1-\delta)(\beta - \lambda(r_i)) - 2K-4\lambda(r_i) = \left(1 - \delta-\frac{2K+4\lambda(r_i)}{\beta - \lambda(r_i)}\right)(\beta - \lambda(r_i)).
\end{align*}
Thus, after setting $\beta$ and $\delta$, one can choose small $K$ and $\ov \lambda$ so that the compatibility condition holds.
Then one may further decrease the function $\ov \lambda$ to satisfy the lacunarity condition.
\medskip

\rems
1) Our assumptions are not quantitatively optimal, they suit our general goal toward explicit examples (cf.\ Section~\ref{s:ex}).
However, in Section~\ref{optim}, we argue that the lacunary walling condition is in a sense necessary in our approach.
\smallskip

\noindent
2) In this paper, we follow (up to some notations) the construction of walls provided by Wise \cite{W-qch} in a much
more general case of small cancellation over CAT(0) cubical complexes. In fact, for graphical small cancellation --- as considered in our
paper --- one could adapt the proofs provided in \cite{W-sc} in the classical small cancellation case (cf.\ e.g.\ \cite{OllivierWise}).
We decided to follow the more general approach having in mind possible future extensions of our results.
\smallskip

\noindent
3) Whereas the construction of walls for (cubical) small cancellation complexes is entirely the idea of Wise, the 
properness of the wall pseudo-metric is proved only in some cases in \cites{W-sc,W-qch}. In particular, as we point out in 
\cite[Section 6]{AO}, there exist classical $B(6)$ small cancellation complexes whose wall pseudo-metric is not proper.
In \cite[Section 5.k]{W-qch} the linear separation property is proved in the case of graphical small cancellation under the additional
assumption on the presentation finiteness. The proof does not extend to our -- infinitely presented -- case.
In Section~\ref{optim}, we comment on relations between our approach and the one from  \cite[Section 5.k]{W-qch}.
 
\medskip

By Subsection~\ref{s:walls}, for a complex $X$ with the lacunary walling condition, there is a structure of space with walls $(X^{(0)},\mathcal W)$.
The rest of this section is devoted to proving that $(X^{(0)},\mathcal W)$ satisfies the \emph{linear separation property} (Theorem~\ref{t:linear} in the Introduction, and Theorem~\ref{p:lsp} below) stating that the wall pseudo-metric on $\xz$ is bi-Lipschitz equivalent to the path metric (cf.\ e.g.\ \cite[Section 5.11]{W-qch}). 
\medskip

Let $p,q$ be two distinct vertices in $X$.
It is clear that
\begin{align*}
  \dw(p,q) \leqslant d(p,q).
\end{align*}
For the rest of this section our aim is to prove an opposite (up to a scaling constant) inequality.
\medskip

Let $\gamma$ be a geodesic in $X$ (that is, in its $1$--skeleton $\xj$) with endpoints $p,q$. Let $A(\gamma)$ denote the set of edges in $\gamma$ whose walls meet $\gamma$ in only one edge (in particular such walls separate $p$ from $q$). Clearly $\dw(p,q)\geqslant |\ag|$.
We thus estimate $\dw(p,q)$ by closely studying the set $\ag$. The estimate is first provided locally (in Subsection~\ref{s:local} below) and then we use the local-to-global density principle (Lemma~\ref{l:lgd}) to obtain a global bound.
\medskip
%%%%%%%%%%%%%%%%%%%%%%%%%%%%%%%%%%%%%

We begin with an auxiliary lemma.
Let $r$ be a relator. Since $r$ is convex in $X$, its intersection with $\gamma$ is an interval $p'q'$, with $p'$ lying closer
to $p$ --- see Figure~\ref{f:C}. 
Consider the set $C$ of edges $e$ in $p'q'$, whose walls $w$ meet $\gamma$ at least twice and, moreover, have the following properties. Let $e'\in w$ (considered as an edge-vertex in the hypergraph $\Gamma_w$) be a closest vertex to $e$ in $\Gamma_w$, among edges of $w$ lying on $\gamma$. In the hypergraph $\Gamma_w$ of the wall $w$, which is a tree by Lemma~\ref{l:hypergraph}, consider the unique geodesic $\gamma_w$ between vertices $e$ and $e'$. We assume that there are at least two distinct relator-vertices on $\gamma_w$, one of them being~$r$.

\begin{figure}[h!]
\centering
\includegraphics[width=0.9\textwidth]{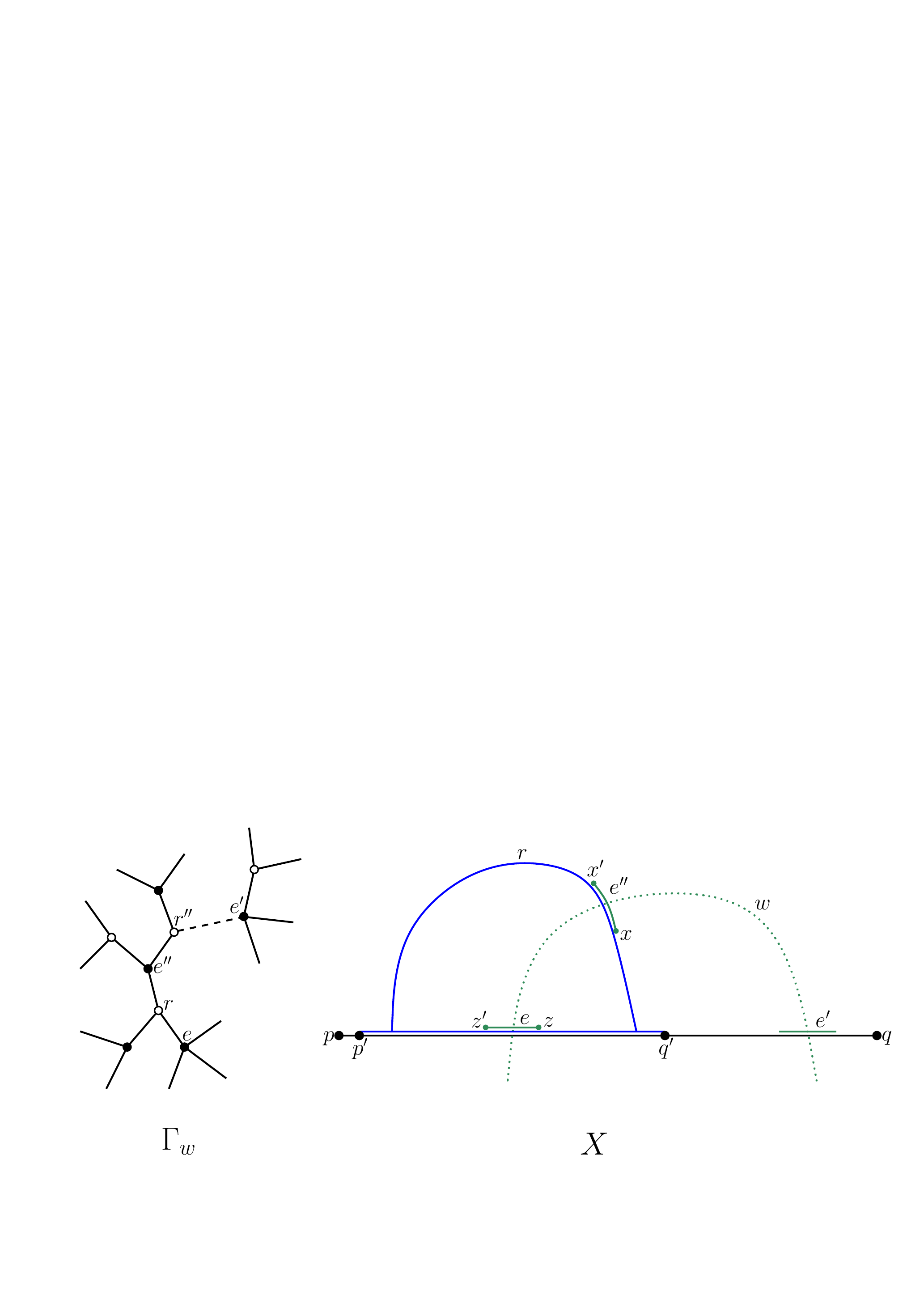}
\caption{Lemma~\ref{l:C}.}
\label{f:C}
\end{figure}

\begin{lem}
\label{l:C}
In the situation as above we have $|C|\leqslant 2\cdot b_r(\lambda(r)\cdot \mr{girth}\,r)$.
\end{lem}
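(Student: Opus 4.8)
The goal is to bound $|C|$, the set of edges $e$ on the interval $p'q' = r \cap \gamma$ whose walls are ``bad'' in the specified sense (the hypergraph geodesic $\gamma_w$ from $e$ to its nearest on-$\gamma$ wall-partner $e'$ passes through a second relator-vertex besides $r$). The key geometric observation I would exploit is that each such edge $e$, together with the relator $r$ and the second relator-vertex $r''$ on $\gamma_w$, forces $e$ to lie ``close'' (in $r$) to the piece $r \cap r''$: indeed, along $\gamma_w$ one passes from the relator-vertex $r$ to an edge-vertex, then to $r''$, so $e$ is an edge of $r$ that is contained in the same wall (inside $r$) as some edge belonging to both $r$ and $r''$. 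By the $\beta$--condition applied inside $r$, two edges in the same wall of $r$ are at distance at least $\beta \cdot \gi r - 1$; but we want an \emph{upper} bound, so the relevant direction is different — rather, the point is that the wall of $r$ containing $e$ also contains an edge of the piece $r \cap r''$, and that piece has length $< \lambda(r) \cdot \gi r$ by the $C'(\overline\lambda)$--condition.

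First I would fix such an edge $e \in C$ and unwind the definition: let $w$ be its wall, $e' \in w$ the closest on-$\gamma$ partner in $\Gamma_w$, and let $r'' \neq r$ be a relator-vertex on the hypergraph geodesic $\gamma_w$. Walking along $\gamma_w$ from $e$, the first relator-vertex encountered must be $r$ itself (since $e \in r$ and $\Gamma_w$ is a tree, the edge-vertex $e$ has neighbors exactly the relator-vertices containing $e$, and $r$ is one such that lies toward $r''$). Hence there is an edge $f$ with $f \in r$, $f \in w$, and $f$ adjacent in $\Gamma_w$ to $r''$ as well, so $f$ lies in the piece $r \cap r''$. Now $e$ and $f$ lie in the same wall of the graph $r$ (the wall structure on $r$ that induces $w$), and $f$ lies on a piece of $r$, hence within the ball of radius $\lambda(r)\cdot \gi r$ around any fixed endpoint of that piece. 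The crucial claim I would then establish is that $e$ itself lies within distance $\lambda(r)\cdot\gi r$ of the piece $r\cap r''$ — equivalently, within a ball of that radius — which would place all of $C$ inside (at most) two such balls, one for each ``end'' of $p'q'$ or each relevant piece, giving $|C| \le 2 \, b_r(\lambda(r)\cdot\gi r)$.

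The main obstacle — and where I expect the real work to be — is proving that last containment, i.e., that being in the same $r$-wall as an edge of the piece $r \cap r''$ forces $e$ to be \emph{geometrically} close to that piece in $r$. This is not automatic from the definition of walls in a single graph (walls can a priori be ``long''). The resolution should come from combining the $\beta$--condition with convexity of $r$ and of hypercarriers (Theorem~\ref{l:carrconv}, Corollary~\ref{l:relcon}): because $r$ is convex in $X$ and $\gamma$ is a geodesic, $p'q'$ is a geodesic of $r$; and because $e'$ is the \emph{closest} on-$\gamma$ partner of $e$ in $\Gamma_w$, the portion of $\gamma_w$ between $e$ and $e'$ cannot re-enter the relator $r$ after leaving it — so in fact $e'$ lies ``on the other side'' of $r$ along the wall, and the segment of the wall within $r$ connecting $e$ to the piece is short, controlled by how walls in $r$ propagate, ultimately by $\lambda(r)\cdot\gi r$ via the small cancellation bound on pieces. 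I would set this up carefully using the tree structure of $\Gamma_w$ to argue that the sub-hypergraph between $e$ and the first exit from $r$ determines a sub-wall of $r$ all of whose edges lie within a single piece-sized neighborhood, then conclude. Once this is in place, the bound $|C| \le 2\, b_r(\lambda(r)\cdot\gi r)$ follows by noting the two ``directions'' along $p'q'$ (toward $p'$ and toward $q'$) each contribute one such ball.
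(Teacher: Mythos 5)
Your setup is right: you correctly identify the edge $e''$ (your $f$) on $\gamma_w$ adjacent to both $r$ and $r''$, note that it lies in the piece $r\cap r''$, and invoke the small cancellation bound on pieces. But the step you flag as ``the crucial claim'' --- that $e$ itself lies within distance $\lambda(r)\cdot\gi\,r$ of the piece $r\cap r''$ --- is not just hard, it is false. The edges $e$ and $e''$ belong to the same wall of $r$, so the $\beta$--condition gives $d(e,e'')+1\geqslant \beta\cdot\gi\,r$, while the piece containing $e''$ has diameter less than $\lambda(r)\cdot\gi\,r$ with $\lambda(r)<\beta/2$; hence $e$ is \emph{far} from the piece, and $C$ cannot be covered by two balls of radius $\lambda(r)\cdot\gi\,r$. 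The actual proof never places $C$ inside a ball. It counts differently: the defining condition on $e\in C$ (that $e'$ is the \emph{closest} on-$\gamma$ wall-partner of $e$ in $\Gamma_w$, and that $\gamma_w$ passes through at least two relator-vertices) forces the wall of $e$ to meet $p'q'$ in exactly one edge --- a second edge of $w$ in $p'q'\subseteq r$ would sit at hypergraph distance $2$ from $e$ through the single relator-vertex $r$, contradicting minimality. So the edges of $C$ (in each of the two directions along $\gamma$) inject into their walls, each such wall contributes an edge $e''$ lying in a fixed ball of radius $\lambda(r)\cdot\gi\,r$, and the ball bound $b_r(\lambda(r)\cdot\gi\,r)$ is applied to the $e''$'s, not to the $e$'s.

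There is a second gap: you say $C$ would lie in ``two such balls, one for each end of $p'q'$ or each relevant piece'', but the relator $r''$, and hence the piece $r\cap r''$, varies with $e$, so a priori you get many balls, not two. The paper's anchoring step is to show that $q'\in r''$ (when $q'$ lies between $e$ and $e'$ on $\gamma$), using convexity of the hypercarrier of $w$ (Theorem~\ref{l:carrconv}) together with the tree structure of $\Gamma_w$ (Lemma~\ref{l:hypergraph}). Then $r\cap r''$ is convex and contains both $e''$ and $q'$, so $d(e'',q')+1\leqslant\lambda(r)\cdot\gi\,r$: all the $e''$'s, for all the varying $r''$, lie in the single ball of radius $\lambda(r)\cdot\gi\,r$ centred at $q'$ (resp.\ at $p'$ in the other direction). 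This is where the factor $2\cdot b_r(\lambda(r)\cdot\gi\,r)$ comes from, and it is the piece of the argument your outline is missing.
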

\begin{proof}
Suppose that $q'$ lies between $e$ and $e'$ (on $\gamma$). 
Let $e''\neq e$ be the edge-vertex on $\gamma_w$ adjacent to $r$ and, consequently, let $r''$ be the relator-vertex on $\gamma_w$ 
adjacent to $e''$ --- see Figure~\ref{f:C}.
By convexity (Lemma~\ref{l:carrconv}) and the tree-like structure (Lemma~\ref{l:hypergraph}) of the hypercarrier of $w$, containing $e$ and $e'$, we have that $q'\in r''$. Since $r\cap r''$ is convex and contains both
$e''$ and $q'$, by the small cancellation condition we have
\begin{align*}
d(e'',q')+1\leqslant \lambda(r)\cdot \mr{girth}\, r.
\end{align*}
Therefore, the number of edges $e''$ as above is at most $b_r(\lambda(r)\cdot \mr{girth}\,r)$.
The same number bounds the quantity of the corresponding walls.
By our assumptions, every such wall contains only one edge in $p'q'$.
Thus, the number of edges $e$ as above is at most $b_r(\lambda(r)\cdot \mr{girth}\,r)$.
Taking into account the situation when $p'$ lies between $e$ and $e'$ we have
\begin{align*}
|C|\leqslant 2\cdot b_r(\lambda(r)\cdot \mr{girth}\,r).
\end{align*} \end{proof}

%%%%%%%%%%%%%%%%%%%%%%%%%%%%%%%%%%%%%%%%

\subsection{Local estimate on \texorpdfstring{$|\ag|$}{|A(gamma)|}.}
\label{s:local}
For a local estimate we need to define neighborhoods $N_e$ -- \emph{relator neighborhoods in $\gamma$} -- one for every edge $e$ in $\gamma$, for which the number $|E(N_e)\cap \ag|$ can be bounded from below.

For a given edge $e$ of $\gamma$ we define a corresponding relator neighborhood $N_e$ as follows.
If $e\in \ag$ then $N_e=\{ e \}$. Otherwise, we proceed in the way described below.
\medskip

Since $e$ is not in $\ag$, its wall $w$ crosses $\gamma$ in at least one more edge. In the wall $w$, choose an edge $e'\subseteq \gamma$ being  a closest edge-vertex to $e\neq e'$ in the hypergraph $\Gamma_w$ of the wall $w$. 
We consider separately the two following cases, see Subsection~\ref{s:c1} and Subsection~\ref{s:c2} below.
\subsubsection{(\emph{Case I.)} The edges $e$ and $e'$ do not lie in common relator.}
\label{s:c1}
In the hypergraph $\Gamma_w$ of the wall $w$, which is a tree by Lemma~\ref{l:hypergraph}, consider the geodesic $\gamma_w$ between vertices $e$ and $e'$. Let $r$ be the relator-vertex in $\gamma_w$ adjacent to $e$. Let $e''$ be an edge-vertex in $\gamma_w$ adjacent to $r$. Consequently, let $r''$ be the other relator-vertex in $\gamma_w$ adjacent to $e''$.
The intersection of $r$ with $\gamma$ is an interval $p'q'$.
Assume without loss of generality, that $q'$ lies between $e$ and $e'$ ---  see Figure~\ref{f:C}. 
\medskip
 
We define the relator neighborhood $N_e$ as the interval $p'q'=r\cap \gamma$.

\begin{lem}
\label{l:Ne}
\begin{align*}
|E(N_e)|> (\beta - \lambda(r))\cdot \gi\,r.
\end{align*}  
\end{lem}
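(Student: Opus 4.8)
The goal is a lower bound on the number of edges in the interval $N_e = p'q' = r\cap\gamma$, where $r$ is the relator-vertex of $\gamma_w$ adjacent to $e$, under the Case I hypothesis that $e$ and $e'$ share no common relator. The plan is to produce this bound by exhibiting a long geodesic subpath of $p'q'$ and applying the $\beta$-condition to the wall $w$ inside the relator $r$. First I would observe that $e$ lies in $r$ (it is on $\gamma$ and $r$ is adjacent to $e$ in $\Gamma_w$), and that there is an edge $e''$ of $w$ lying in $r$ as well, coming from the next edge-vertex along $\gamma_w$ toward $e'$; by the Case I assumption, $e''\ne e$, so $e$ and $e''$ are two distinct edges of $r$ belonging to the same wall $w$ (they are adjacent in $\Gamma_w$ to the common relator-vertex $r$, hence lie in a common wall of the graph $r$). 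Applying the $\beta$-condition (Definition~\ref{d:sep}) inside $r$ gives $d(e,e'') + 1 \geqslant \beta\cdot\gi\,r$, where the distance is measured in $r$; since $r$ is convex in $\xj$ by Corollary~\ref{l:relcon}, this is the same as the distance in $X$.

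The next step is to locate $e$ and $e''$ relative to the interval $p'q'$ so that $d(e,e'')$ is essentially realized by a subpath of $p'q'$. Since $q'$ lies between $e$ and $e'$ on $\gamma$ (by the WLOG normalization), and $e$ lies on $\gamma\cap r = p'q'$, the edge $e$ sits in $p'q'$ on the $p'$-side. For $e''$: by convexity of the hypercarrier of $w$ and the tree structure of $\Gamma_w$ (Lemmas~\ref{l:carrconv} and~\ref{l:hypergraph}), the relevant portion of $\gamma$ between $e$ and $e'$ that stays in $r$ is exactly $p'q'$, and $e''$ together with $q'$ both lie in $r\cap r''$, which is convex; so $d(e'',q')+1\leqslant \lambda(r)\cdot\gi\,r$ by the small cancellation condition — this is precisely the estimate already extracted in the proof of Lemma~\ref{l:C}. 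Hence $e''$ is within $\lambda(r)\cdot\gi\,r$ of the endpoint $q'$ of $N_e$. Combining, the length of $p'q'$ is at least $d(e, q') \geqslant d(e,e'') - d(e'',q') \geqslant (\beta\cdot\gi\,r - 1) - (\lambda(r)\cdot\gi\,r - 1) = (\beta - \lambda(r))\cdot\gi\,r$, and since $e$ is not the edge at $q'$ one gets the strict inequality $|E(N_e)| > (\beta - \lambda(r))\cdot\gi\,r$.

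The step I expect to be the main obstacle is the careful geometric bookkeeping in the middle paragraph: verifying that $e''$ really can be taken inside $r$ and that $d(e'',q')$ is controlled by the small cancellation constant. This requires using the tree structure of the hypergraph $\Gamma_w$ to see that along the geodesic $\gamma_w$ from $e$ to $e'$, the relator $r$ and the next relator $r''$ share the edge-vertex $e''$, and then using convexity of hypercarriers (Theorem~\ref{l:carrconv}) to pin down that the trace of this combinatorics on $\gamma$ forces $q' \in r''$, so that $e''$ and $q'$ both lie in the convex piece $r\cap r''$. Once $q'\in r\cap r''$ is established, the bound $d(e'',q')+1\leqslant\lambda(r)\cdot\gi\,r$ is immediate from the $C'(\overline\lambda)$--condition applied to the piece $r\cap r''$ in $r$. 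Everything else is arithmetic with the triangle inequality. I would also remark that the asymmetric case where $p'$ (rather than $q'$) lies between $e$ and $e'$ is handled by the same argument with the roles of the two endpoints of $N_e$ interchanged, exactly as in Lemma~\ref{l:C}.
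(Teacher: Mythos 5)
Your proof is correct and follows essentially the same route as the paper: apply the $\beta$--condition to the pair $e,e''$ inside $r$, bound $d(e'',q')$ via the small cancellation condition on the piece $r\cap r''$ (using $q'\in r''$, exactly as in Lemma~\ref{l:C}), and combine by the triangle inequality through $q'$. The paper phrases the same computation with the vertices $x\in e''$ and $z\in e$ closest to $q'$, but the estimates and the source of the strict inequality are the same.
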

\begin{proof}
Let $xq'$ be the geodesic between $e''$ and $q'$.  Let $z$ be the vertex in $e$ closest to $q'$ ---  see Figure~\ref{f:C}.
By the \bds property we have
\begin{align}
\label{e:110}
|xq'|+|q'z|+1\geqslant \beta \cdot \gi\,r.
\end{align}  
On the other hand, by the small cancellation condition, we have
\begin{align}
\label{e:115}
|xq'|+1\leqslant \lambda(r)\cdot \gi\,r.
\end{align}  
Combining (\ref{e:110}) with (\ref{e:115}), we obtain 
\begin{align*}
|p'q'|&\geqslant |q'z|+1\geqslant \beta \cdot \gi\, r -|xq'| > \beta \cdot \gi\, r - \lambda(r)\cdot \gi\,r \\
& \geqslant (\beta - \lambda(r))\cdot \gi\,r.
\end{align*}  
\end{proof}

We are now ready to state the main result in Case I.

\begin{lem}[Local density of $\ag$ --- Case I]
  \label{l:prop2}
The number of edges in $N_e$, whose walls separate $p$ from $q$ is estimated as follows:
\begin{align*}
|E(N_e)\cap \ag| \geqslant \frac{(1-\delta)\cdot (\beta - \lambda (r))-2K -4\lambda (r)}{\beta - \lambda (r)}|E(N_e)|.
\end{align*}
\end{lem}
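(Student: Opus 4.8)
The goal is to bound $|E(N_e)\cap A(\gamma)|$ from below by a definite fraction of $|E(N_e)|$. The neighborhood $N_e=p'q'$ is the intersection of the relator $r$ with $\gamma$, so its edge-set is a geodesic in $r$. I would partition $E(N_e)$ into three types of ``bad'' edges together with the ``good'' edges of $A(\gamma)$, and estimate each bad type: first, edges whose walls meet $\gamma$ at least twice but whose hypergraph geodesic to the nearest wall-edge on $\gamma$ passes through at least two distinct relator-vertices (one of them $r$) — these are exactly the edges in the set $C$ from Lemma~\ref{l:C}, so their number is at most $2\cdot b_r(\lambda(r)\cdot\gi\,r)\leqslant 2K\cdot\gi\,r$ by the lacunarity hypothesis; second, edges whose walls meet $\gamma$ at least twice but whose relevant hypergraph geodesic stays ``within essentially one relator'' — these should be confined to a short subinterval of $p'q'$ near an endpoint, of length at most a small multiple of $\lambda(r)\cdot\gi\,r$ (this is where I would extract a bound like $4\lambda(r)\cdot\gi\,r$ by a small-cancellation/convexity argument analogous to the proof of Lemma~\ref{l:C}); and third, edges whose walls meet $\gamma$ at least twice \emph{inside $r$ itself}, i.e. the ``$\delta$-bad'' edges counted by the $\delta$-condition applied to the geodesic $p'q'$ in $r$, numbering at most $\delta\cdot|E(N_e)|$.

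Combining these, I would write
\begin{align*}
|E(N_e)\cap A(\gamma)| \geqslant |E(N_e)| - \delta\,|E(N_e)| - 2K\cdot\gi\,r - 4\lambda(r)\cdot\gi\,r,
\end{align*}
and then convert the additive error terms in $\gi\,r$ into multiplicative ones using Lemma~\ref{l:Ne}, which gives $|E(N_e)|>(\beta-\lambda(r))\cdot\gi\,r$, hence $\gi\,r < |E(N_e)|/(\beta-\lambda(r))$. Substituting yields
\begin{align*}
|E(N_e)\cap A(\gamma)| \geqslant \left((1-\delta) - \frac{2K+4\lambda(r)}{\beta-\lambda(r)}\right)|E(N_e)| = \frac{(1-\delta)(\beta-\lambda(r)) - 2K - 4\lambda(r)}{\beta-\lambda(r)}\,|E(N_e)|,
\end{align*}
which is exactly the claimed inequality. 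The compatibility condition in the lacunary walling definition is precisely what makes the resulting coefficient positive (bounded below by $M$), so it is not needed for this lemma per se but explains why the estimate is useful.

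\textbf{Main obstacle.} The delicate point is the careful case analysis of \emph{why} an edge $e_0$ of $N_e$ that is not in $A(\gamma)$ must fall into one of the three controlled categories, and in particular pinning down the second category: when the hypergraph geodesic from $e_0$ to its nearest companion wall-edge on $\gamma$ does \emph{not} leave the relator $r$ in the relevant direction, one must argue — via convexity of hypercarriers (Lemma~\ref{l:carrconv}), the tree structure of hypergraphs (Lemma~\ref{l:hypergraph}), and the small cancellation bound on $d(e'',q')$ — that $e_0$ lies within distance $O(\lambda(r)\cdot\gi\,r)$ of the endpoint $q'$ (or $p'$). Getting the constant ``$4$'' here, rather than a worse one, requires being attentive to how the companion edge $e'$ sits relative to $q'$ and to the interplay between the wall $w$ of $e_0$ and the wall $w$ defining $N_e$. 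Once this geometric confinement is established, the rest is bookkeeping: summing the three bounds and invoking Lemma~\ref{l:Ne}.
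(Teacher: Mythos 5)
Your proposal follows essentially the same route as the paper: the same three-way partition of the bad edges of $N_e$ (the paper's Cases B, C, D), the same bounds $\delta|E(N_e)|$, $2b_r(\lambda(r)\cdot\gi r)\leqslant 2K\cdot\gi r$ and $4\lambda(r)\cdot\gi r$, and the identical final conversion via Lemma~\ref{l:Ne} and the lacunarity condition. The one step you describe but do not execute --- your ``second category'' --- is exactly the paper's Case D, which it defines not as the hypergraph geodesic staying in one relator but as the first relator-vertex $r_f$ adjacent to $f$ being different from $r$; the paper then splits into two subcases (either $r_f$ is adjacent to both $f$ and $f'$, or a further relator-vertex follows), uses convexity of $r_f$ plus the small cancellation bound on $|r\cap r_f|$ to confine $f$ within $\lambda(r)\cdot\gi r$ of $p'$ or $q'$ in each, and rules out the remaining configuration $p''q''\subseteq p'q'$ by combining the small cancellation bound $|p''q''|\leqslant\lambda(r)\cdot\gi r$ with Lemma~\ref{l:Ne} applied to $r_f$, which contradicts $\lambda<\beta/2$. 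With that confinement argument supplied, your outline matches the paper's proof.
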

\begin{proof}
  To estimate $|E(N_e)\cap \ag|$, that is,
  the number of edges in $N_e$ that belong to $\ag$, we explore the set of edges $f$ in $N_e$ 
outside $\ag$.
We consider separately the three ways in which an edge $f$  of $N_e$ may fail to belong to $\ag$ --- these are studied in Cases: B, C and D below.

Since $f\notin \ag$ there exists another edge of the same wall $w_f$ in $\gamma$. Let $f'$ be a closest to $f$ such edge-vertex in
the hypergraph $\Gamma_{w_f}$. Denote by $\gamma_{w_f}$ the geodesic in $\Gamma_{w_f}$ between $f$ and $f'$.
Let $r_f$ be the relator-vertex on $\gamma_{w_f}$ adjacent to $f$.
\medskip

\noindent
\emph{Case B: There is only one relator-vertex  between $f$ and $f'$ on $\gamma_{w_f}$, and $r_f=r$.}
By convexity of relators, the segment $p'q'$ is geodesic in $r$. Thus, by the  \bds property, 
the cardinality of the set $B$ of such edges $f$ is bounded by
\begin{align}
\label{e:200}
|B|\leqslant \delta \cdot |E(N_e)|.
\end{align}  

\begin{figure}[h!]
\centering
\includegraphics[width=0.9\textwidth]{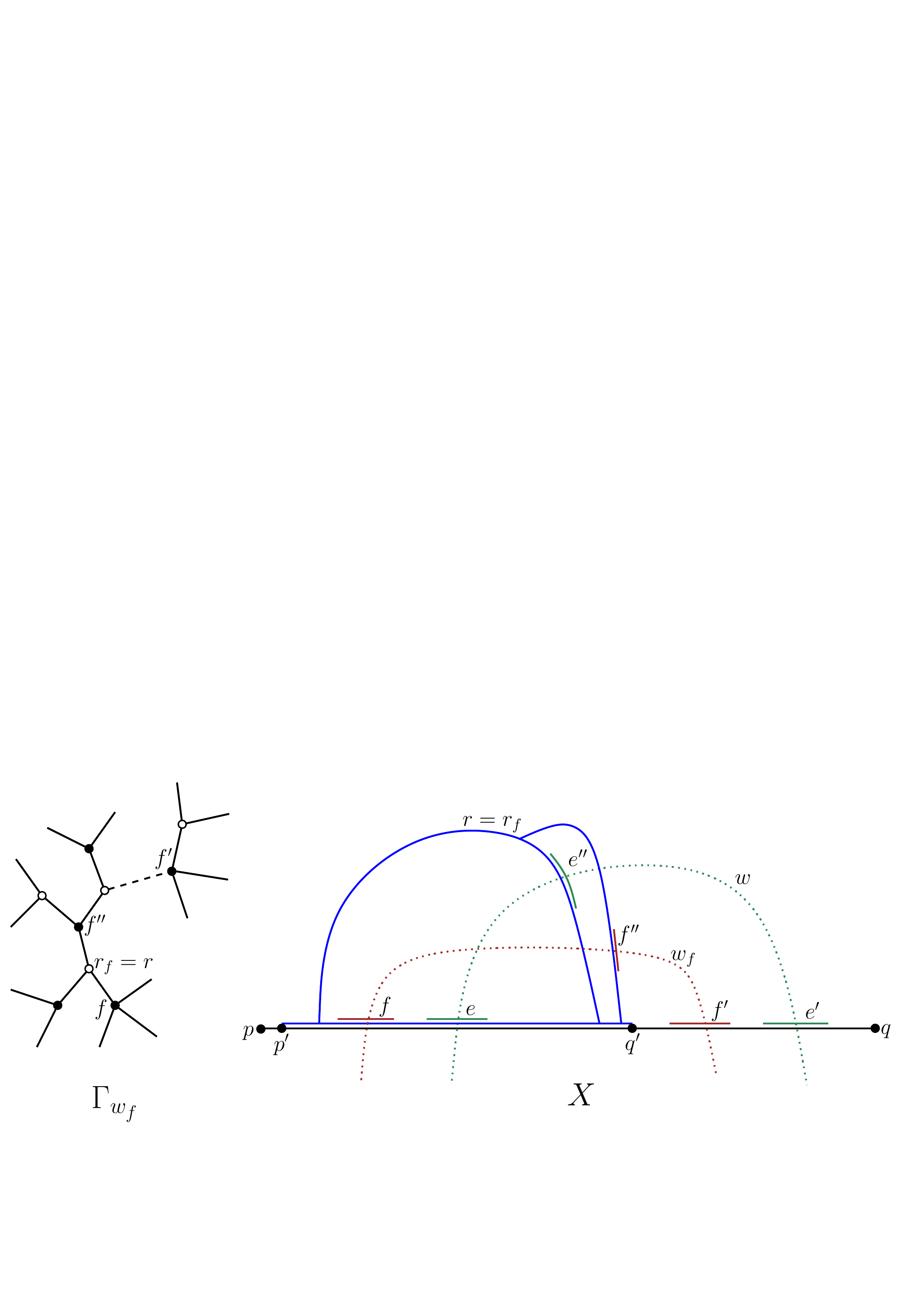}
\caption{Lemma~\ref{l:prop2}, Case I(C).}
\label{f:CC}
\end{figure}

\medskip

\noindent
\emph{Case C: There are at least two distinct relator-vertices between $f$ and $f'$ on $\gamma_{w_f}$, and $r_f=r$} ---  see Figure~\ref{f:CC}.
The cardinality of the set $C$ of such edges $f$ is bounded, by Lemma~\ref{l:C}, as follows:
\begin{align}
\label{e:205}
|C|\leqslant 2\cdot b_r(\lambda(r)\cdot \mr{girth}\,r).
\end{align}

\medskip

\noindent
\emph{Case D: $r_f\neq r$.}
Let the set of such edges $f$ be denoted by $D$. 
Let $p''q'':=r_f \cap \gamma$, with $p''$ closer to $p$.
\medskip

\begin{figure}[h!]
\centering
\includegraphics[width=0.9\textwidth]{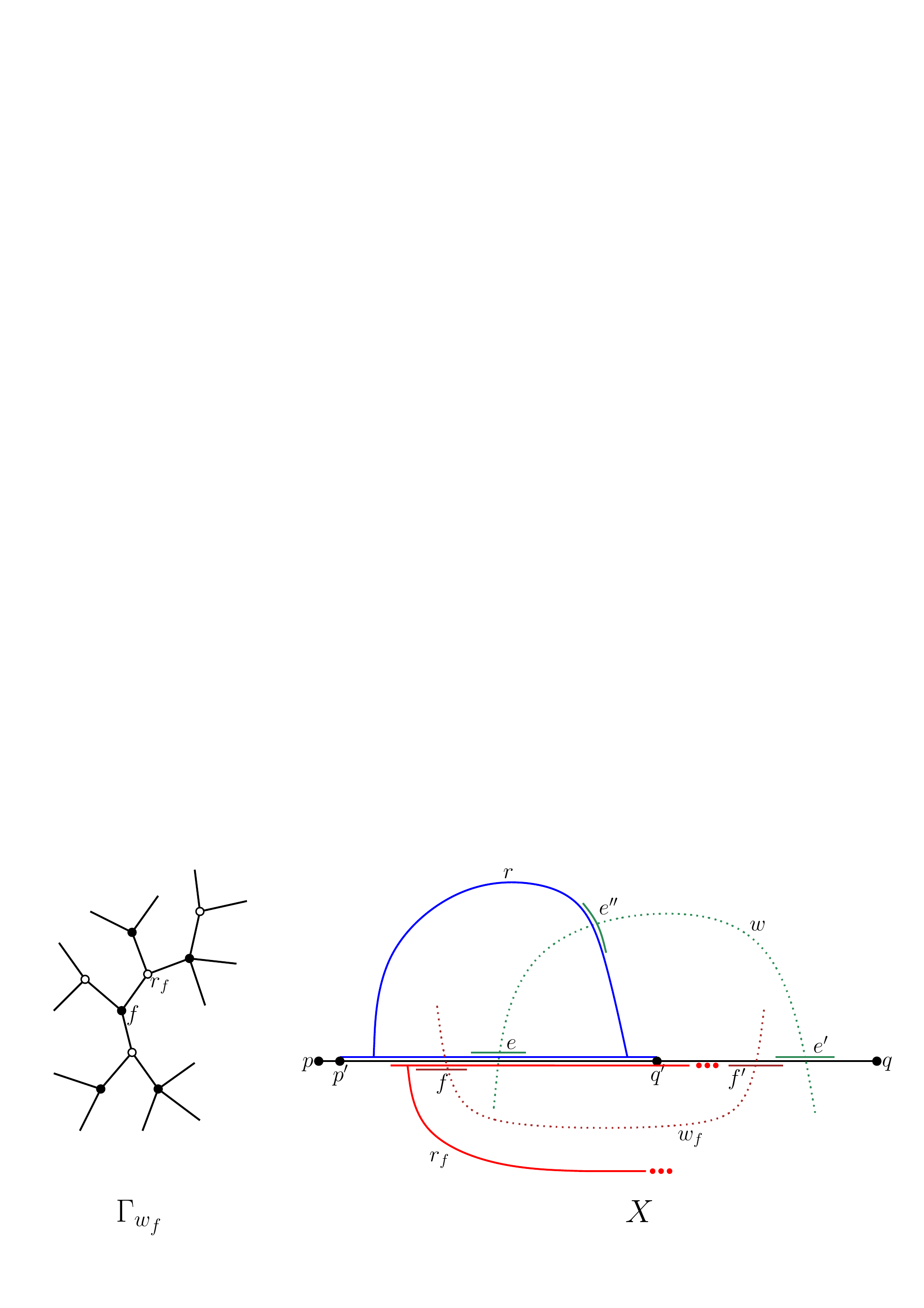}
\caption{Lemma~\ref{l:prop2}, the possible Case I(D).}
\label{f:Dpos}
\end{figure}

First consider the subcase when the relator-vertex $r_f$ is adjacent to both $f$ and $f'$.
Observe that, since $r_f\neq r$, the edge $f'$ does not belong to  $r$.
Without loss of generality, we may assume that $q'$ lies (on $\gamma$) between
$f$ and $f'$. Since $r_f$ is convex, it follows that the interval on $\gamma$ between $f$ and $q'$
is contained in $r_f$ --- see Figure~\ref{f:Dpos}. Thus, by the small cancellation condition, the number of such edges $f$ is bounded
by (taking into account the symmetric situation after exchanging $p'$ and $q'$):
\begin{align}
\label{e:270}
2\cdot \lambda (r)\cdot \gi\,r.
\end{align}  
\medskip

\begin{figure}[h!]
\centering
\includegraphics[width=0.9\textwidth]{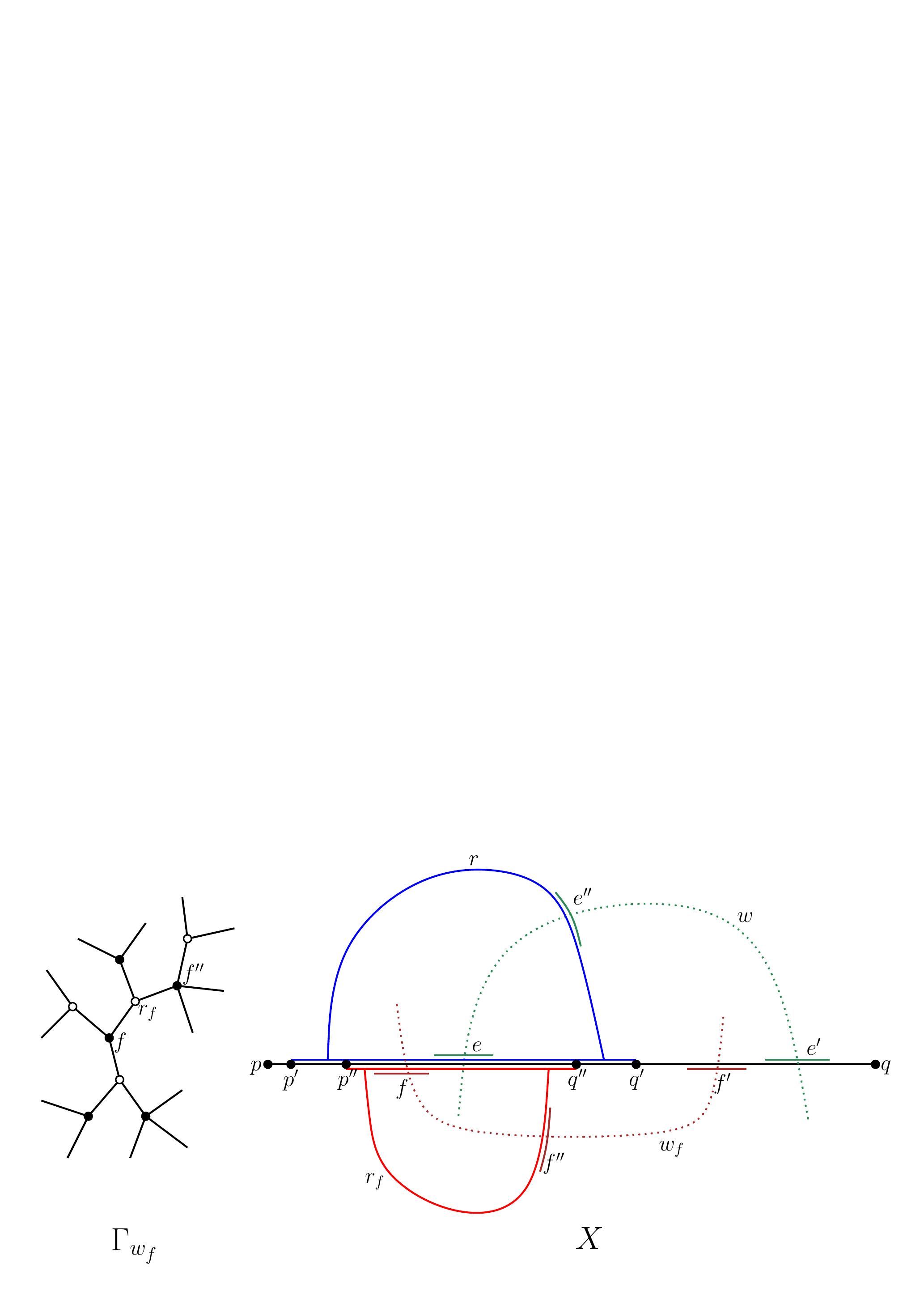}
\caption{Lemma~\ref{l:prop2}, the impossible Case I(D).}
\label{f:Dimpos}
\end{figure}

The other subcase to consider is when there is another relator-vertex $r_f'\neq r_f$ on $\gamma_{w_f}$ adjacent to 
$f''$, that is itself adjacent to $r_f$.
The number of edges $f$ for which $r_f$ contains $p'$ or $q'$ is, again by the small cancellation condition --- see Figure~\ref{f:Dpos}, bounded by:
\begin{align}
\label{e:275}
2\cdot \lambda (r)\cdot \gi\,r.
\end{align}  
Thus, further we assume that $p''q'' \subseteq p'q'$ --- see Figure~\ref{f:Dimpos}. We will show that this is impossible.
By the small cancellation condition, we have 
\begin{align}
\label{e:280}
|p''q''|\leqslant \lambda(r) \cdot \gi\,r.
\end{align}  
On the other hand, by Lemma~\ref{l:Ne} we obtain
\begin{align}
\label{e:285}
|p''q''|\geqslant (\beta -\lambda (r))\cdot \gi\,r.
\end{align}  
Combining (\ref{e:280}) and (\ref{e:285}), we get 
\begin{align*}
\lambda (r) \geqslant \beta - \lambda (r).
\end{align*}   
This however contradicts our choice of $\beta$ and $\ov {\lambda}$ (see Definition~\ref{d:cond}).
\medskip

Combining quantities (\ref{e:270}) and (\ref{e:275}) above we obtain the following bound on the number of
edges in $D$:
\begin{align}
\label{e:290}
|D|\leqslant 4 \cdot \lambda (r)\cdot \gi\,r.
\end{align}  
\medskip

Now we combine all the cases: B, C, and D, to obtain the following bound in Case I, see estimates (\ref{e:200}), (\ref{e:205}), and (\ref{e:290}) above.
\begin{align*}
|E(N_e) \setminus A(\gamma)|&\leqslant  |B|+|C|+|D|\\ & \leqslant \delta \cdot |E(N_e)|+ 2\cdot b_r(\lambda(r)\cdot \mr{girth}\,r) + 4 \cdot \lambda (r)\cdot \gi\,r. 
\end{align*}  
By lacunarity (see Definition~\ref{d:cond}), we have $b_r(\lambda(r)\cdot \mr{girth}\,r) \leqslant K\cdot \gi\,r$, and by Lemma~\ref{l:Ne}, we get $\gi\,r < \frac{|E(N_e)|}{\beta - \lambda (r)}$.
Therefore,
\begin{align}
|E(N_e) \setminus A(\gamma)| \leqslant \frac{\delta\cdot (\beta - \lambda (r))+2K +4\lambda (r)}{\beta - \lambda (r)}|E(N_e)|,
\end{align}  
and hence
\begin{align}
|E(N_e)\cap \ag| \geqslant \frac{(1-\delta)\cdot (\beta - \lambda (r))-2K -4\lambda (r)}{\beta - \lambda (r)}|E(N_e)|.
\end{align}  
\end{proof}

\subsubsection{\emph{(Case II.)} The edges $e$ and $e'$ lie in common relator $r$.} 
\label{s:c2}
We may assume (exchanging $e'$ if necessary) that $e'$ is closest to $e$ (in $X$) among edges in $w$ lying in $r \cap \gamma$.
\medskip

The relator neighborhood $N_e$ is now defined as the interval $p'q'=r\cap \gamma$ --- see Figure~\ref{f:c2}.

\begin{figure}[h!]
\centering
\includegraphics[width=0.9\textwidth]{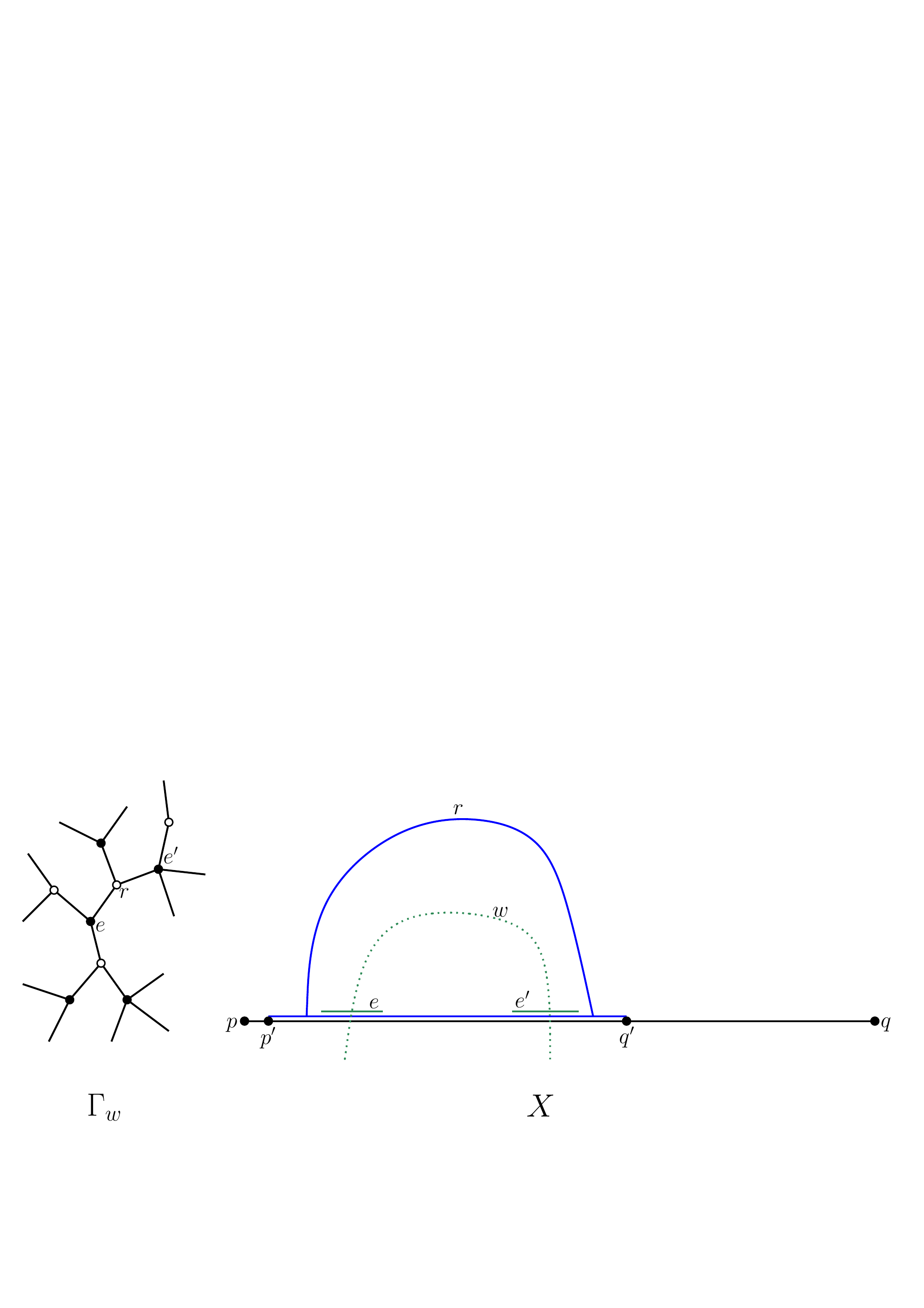}
\caption{Lemma~\ref{l:prop2}, Case II.}
\label{f:c2}
\end{figure}

\begin{lem}[Local density of $\ag$ --- Case II]
  \label{l:prop3}
The number of edges in $N_e$, whose walls separate $p$ from $q$ is estimated as follows:
\begin{align*}
|E(N_e)\cap \ag| \geqslant \frac{(1-\delta)\cdot \beta-2K -4\lambda (r)}{\beta}|E(N_e)|.
\end{align*}
\end{lem}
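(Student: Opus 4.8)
The plan is to mimic the proof of Lemma~\ref{l:prop2} (Case I) almost verbatim, adjusting only the bookkeeping that changes when $e$ and $e'$ share the relator $r$. As before I set $N_e=p'q'=r\cap\gamma$ and decompose the set $E(N_e)\setminus A(\gamma)$ of ``bad'' edges $f$ in $N_e$ according to the structure of the geodesic $\gamma_{w_f}$ in the hypergraph $\Gamma_{w_f}$ of the wall $w_f$ of $f$, with $f'$ the closest edge-vertex to $f$ lying on $\gamma$ and $r_f$ the relator-vertex adjacent to $f$ on $\gamma_{w_f}$. The three cases are the same: Case~B ($r_f=r$ and only one relator-vertex between $f$ and $f'$), Case~C ($r_f=r$ with at least two relator-vertices between $f$ and $f'$), and Case~D ($r_f\neq r$).

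Case~B is handled exactly as before: $p'q'$ is a geodesic in $r$ (convexity of relators, Corollary~\ref{l:relcon}), so by the $\delta$--condition $|B|\leqslant \delta\cdot|E(N_e)|$. Case~C is bounded by Lemma~\ref{l:C}, giving $|C|\leqslant 2\cdot b_r(\lambda(r)\cdot\gi r)\leqslant 2K\cdot\gi r$ by lacunarity; note that Lemma~\ref{l:C} applies here because its hypothesis is precisely that $\gamma_w$ carries at least two relator-vertices one of which is $r$. Case~D is the small-cancellation estimate: any $f$ with $r_f\neq r$ forces $r_f\cap\gamma=p''q''$ to overlap $p'q'$ in a piece, and the same two subcases as in Lemma~\ref{l:prop2} — $r_f$ adjacent to both $f$ and $f'$, versus an intermediate relator-vertex $r_f'$ — yield $|D|\leqslant 4\lambda(r)\cdot\gi r$, the ``impossible'' subcase being ruled out exactly as before by combining $|p''q''|\leqslant\lambda(r)\cdot\gi r$ with a lower bound on $|p''q''|$ and invoking $\lambda(r)<\beta/2$.

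The only genuine difference is the lower bound on $\gi r$ in terms of $|E(N_e)|$. In Case~I one uses Lemma~\ref{l:Ne}, which gives $|E(N_e)|>(\beta-\lambda(r))\cdot\gi r$ using that $e$ lies outside $r$ so the geodesic $e''q'$ in $r$ can be subtracted off. In Case~II both $e$ and $e'$ lie in $r$, so the argument is cleaner: $e$ and $e'$ are edges of $r$ in the same wall, so the $\beta$--condition applied directly inside $r$ gives $d(e,e')+1\geqslant\beta\cdot\gi r$; since $e,e'\subseteq p'q'=N_e$ and $p'q'$ is a geodesic (hence an isometrically embedded interval), the sub-geodesic of $p'q'$ between $e$ and $e'$ has at least $\beta\cdot\gi r$ edges, whence $|E(N_e)|\geqslant\beta\cdot\gi r$, i.e. $\gi r\leqslant |E(N_e)|/\beta$. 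This explains why the denominator is $\beta$ rather than $\beta-\lambda(r)$ in the statement of Lemma~\ref{l:prop3}. Assembling the pieces,
\begin{align*}
|E(N_e)\setminus A(\gamma)|\leqslant |B|+|C|+|D|\leqslant \delta\cdot|E(N_e)|+2K\cdot\gi r+4\lambda(r)\cdot\gi r\leqslant \frac{\delta\beta+2K+4\lambda(r)}{\beta}\,|E(N_e)|,
\end{align*}
and subtracting from $|E(N_e)|$ gives the asserted bound
\begin{align*}
|E(N_e)\cap A(\gamma)|\geqslant \frac{(1-\delta)\beta-2K-4\lambda(r)}{\beta}\,|E(N_e)|.
\end{align*}

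The main obstacle, such as it is, is making sure the case analysis in Case~II is genuinely exhaustive and that the ``closest edge in $r\cap\gamma$'' choice for $e'$ interacts correctly with the hypergraph geodesic $\gamma_w$ — in particular that choosing $e'$ within $r$ does not create a configuration where $\gamma_w$ fails to pass through $r$, which would break the Case~B/C dichotomy. Once one checks (using convexity of hypercarriers, Theorem~\ref{l:carrconv}, and the tree structure, Lemma~\ref{l:hypergraph}) that $r$ indeed sits on $\gamma_w$ as the relator-vertex adjacent to $e$, the rest is the routine re-run described above.
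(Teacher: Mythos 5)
Your proposal is correct and follows essentially the same route as the paper: the same B/C/D decomposition with the identical bounds $|B|\leqslant\delta|E(N_e)|$, $|C|\leqslant 2b_r(\lambda(r)\cdot\gi r)$, $|D|\leqslant 4\lambda(r)\cdot\gi r$, combined with the lower bound $|E(N_e)|\geqslant\beta\cdot\gi r$ obtained by applying the $\beta$--condition directly to $e,e'\subseteq r\cap\gamma$. You correctly isolate the only substantive change from Case~I (the denominator $\beta$ in place of $\beta-\lambda(r)$), and your extra remarks on why the case analysis remains exhaustive are consistent with, indeed slightly more careful than, the paper's one-line appeal to ``the same considerations as in Case~I.''
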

\begin{proof}
We consider again the set of edges $f$ in $E(N_e)\setminus \ag$. As in Case I (Lemma~\ref{l:prop2}), we consider separately three possibilities: B, C, D
for such an edge $f$ to fail belonging to $\ag$.
The same considerations as in Case I lead to the estimates:
\begin{align*}
|B|&\leqslant \delta \cdot|E(N_e)|,\\
|C|&\leqslant 2\cdot b_r(\lambda(r)\cdot \mr{girth}\,r),\\
|D|&\leqslant 4 \cdot \lambda (r)\cdot \gi\,r.
\end{align*}  
By the \bds property, we have
\begin{align*}
|E(N_e)|\geqslant \beta \cdot \gi\,r. 
\end{align*}   
Combining all the inequalities above we get
\begin{align*}
|E(N_e) \setminus A(\gamma)| \leqslant |B| +|C| +|D|\leqslant \frac{\delta\cdot \beta+2K +4\lambda (r)}{\beta}|E(N_e)|,
\end{align*}  
and hence
\begin{align*}
|E(N_e)\cap \ag| \geqslant \frac{(1-\delta)\cdot \beta-2K-4\lambda (r)}{\beta}|E(N_e)|.
\end{align*}  
\end{proof}

\subsubsection{Final local estimate.}
We are ready to combine all the previous estimates.

\begin{prop}[Local density of $\ag$]
  \label{l:local}
The number of edges in $N_e$, whose walls separate $p$ from $q$ is estimated as follows:
\begin{align*}
|E(N_e)\cap \ag| \geqslant {M}\cdot |E(N_e)|.
\end{align*}
\end{prop}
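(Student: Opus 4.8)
The plan is to combine the two local density estimates from Lemma~\ref{l:prop2} (Case I) and Lemma~\ref{l:prop3} (Case II) with the compatibility hypothesis of the lacunary walling condition (Definition~\ref{d:cond}). Recall that the relator neighborhood $N_e$ is defined case by case: if $e\in\ag$ then $N_e=\{e\}$ and the inequality $|E(N_e)\cap\ag|=1=|E(N_e)|\geqslant M\cdot|E(N_e)|$ is immediate since $M<1$. Otherwise $e\notin\ag$, the wall $w$ of $e$ crosses $\gamma$ in at least one more edge, and we are precisely in Case I or Case II depending on whether $e$ and a nearest such edge $e'$ share a common relator.

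First I would treat Case II. By Lemma~\ref{l:prop3}, $|E(N_e)\cap\ag|\geqslant \frac{(1-\delta)\beta-2K-4\lambda(r)}{\beta}|E(N_e)|$ where $r$ is the relator with $N_e=r\cap\gamma$. Since $\lambda(r)\leqslant\lambda(r_i)$ for the relevant index, and more to the point since $\lambda(r)>0$, the numerator $(1-\delta)\beta-2K-4\lambda(r)$ is bounded below by $(1-\delta)(\beta-\lambda(r))-2K-4\lambda(r)$; hence the Case II coefficient dominates the Case I coefficient $\frac{(1-\delta)(\beta-\lambda(r))-2K-4\lambda(r)}{\beta-\lambda(r)}$ once one checks the elementary inequality $\frac{a-c}{b}\geqslant\frac{a'-c}{b'}$ with $a=(1-\delta)\beta$, $a'=(1-\delta)(\beta-\lambda(r))$, $b=\beta$, $b'=\beta-\lambda(r)$, $c=2K+4\lambda(r)$ — both have the same structure and $b>b'$, $a/b=a'/b'=1-\delta$. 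So in both cases it suffices to bound the Case I coefficient from below.

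Next I would invoke the compatibility condition of Definition~\ref{d:cond}: $(1-\delta)(\beta-\lambda(r_i))-2K-4\lambda(r_i)\geqslant M\cdot(\beta-\lambda(r_i))$ for every $i$. Applied to the relator $r$ at hand (with its index $i$ so that $\lambda(r)=\lambda(r_i)$), this gives exactly
\begin{align*}
\frac{(1-\delta)(\beta-\lambda(r))-2K-4\lambda(r)}{\beta-\lambda(r)}\geqslant M,
\end{align*}
using that $\beta-\lambda(r)>0$ (since $\lambda(r)\leqslant\lambda<\beta/2$). Combining with Lemma~\ref{l:prop2} in Case I and with the previous paragraph's reduction in Case II yields $|E(N_e)\cap\ag|\geqslant M\cdot|E(N_e)|$ in all cases.

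The proof is short and essentially bookkeeping; there is no serious obstacle, the only mild subtlety being the verification that the Case II coefficient is at least the Case I coefficient so that a single inequality — the compatibility condition, which is stated in the Case I form — covers both. An alternative, cleaner route avoiding even that comparison is to observe directly from Lemma~\ref{l:prop3} that in Case II the numerator exceeds $(1-\delta)(\beta-\lambda(r))-2K-4\lambda(r)\geqslant M(\beta-\lambda(r))$ and the denominator $\beta$ exceeds $\beta-\lambda(r)$, so dividing we still get something $\geqslant M$; I would present it this way to keep the argument transparent.
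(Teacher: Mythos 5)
Your main argument is correct and is exactly what the paper's one-line proof leaves implicit: the case $e\in\ag$ is trivial, Case I is the compatibility condition of Definition~\ref{d:cond} verbatim, and the Case II coefficient dominates the Case I coefficient because both can be written as $(1-\delta)-\frac{2K+4\lambda(r)}{\mathrm{denominator}}$ and the Case II denominator $\beta$ is the larger one (this is the content of your ``same structure, $a/b=a'/b'=1-\delta$'' remark, and it is valid since $\beta-\lambda(r)>0$). However, the ``alternative, cleaner route'' that you say you would actually present is fallacious: from a lower bound $M(\beta-\lambda(r))$ on the numerator and the fact that the denominator $\beta$ exceeds $\beta-\lambda(r)$, you may only conclude that the quotient is at least $M\cdot\frac{\beta-\lambda(r)}{\beta}$, which is strictly \emph{less} than $M$ --- enlarging the denominator weakens the bound rather than strengthening it. Keep the ratio comparison from your second paragraph and drop the last one.
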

\begin{proof}
If $e\in \ag$ then the assertion is clear. If $e\notin \ag$ then we use Lemma~\ref{l:prop2} or Lemma~\ref{l:prop3}.
\end{proof}

%%%%%%%%%%%%%
\subsection{Linear separation property}
\label{s:global}
Using the local estimate on the density of $\ag$ (see Proposition~\ref{l:local}) and the local-to-global density principle (Lemma~\ref{l:lgd}) we now estimate the overall density of edges with walls separating $p$ and $q$, thus obtaining the linear separation property.

\begin{tw}[Linear separation property]
  \label{p:lsp}
  For any two vertices $p,q$ in $X$ we have
  \begin{align*}
    d(p,q)\geqslant \dw(p,q)\geqslant \frac{M}{2}\cdot
    d(p,q),
  \end{align*}
  that is, the path metric and the wall pseudo-metric are bi-Lipschitz equivalent.
\end{tw}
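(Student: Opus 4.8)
\textbf{Proof plan for Theorem~\ref{p:lsp}.}

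The plan is to upgrade the local density estimate of Proposition~\ref{l:local} to a global one via the Local-to-global density principle (Lemma~\ref{l:lgd}). First I would recall that the left inequality $d(p,q)\geqslant \dw(p,q)$ is immediate, since every wall separating $p$ from $q$ must contain at least one edge of a fixed geodesic $\gamma$ from $p$ to $q$, and distinct such walls account for distinct edges; this was already observed before the statement, so only the right inequality requires work. Fix a geodesic $\gamma$ in $\xj$ with endpoints $p,q$. Since $\dw(p,q)\geqslant |\ag|$, it suffices to show $|\ag|\geqslant \tfrac{M}{2}\,|E(\gamma)| = \tfrac{M}{2}\,d(p,q)$.

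Next I would set up the input to Lemma~\ref{l:lgd}. Take $\mathcal U$ to be the family of relator neighborhoods $\{N_e : e\in E(\gamma)\}$ defined in Subsection~\ref{s:local}, viewed as nontrivial subpaths of the simple path $\gamma$ (each $N_e$ is either the single edge $e$, when $e\in\ag$, or the interval $r\cap\gamma$ for the appropriate relator $r$, which is a subpath of $\gamma$ by convexity, Corollary~\ref{l:relcon}). Take $A$ to be the subcomplex of $\bigcup\mathcal U$ spanned by the edges in $\ag$ that lie in $\bigcup_e N_e$; note $E(A)\cap E(N_e) = E(N_e)\cap\ag$. One then needs two observations: (i) every edge $e$ of $\gamma$ lies in $\bigcup\mathcal U$, since $e\in N_e$, hence $\bigcup\mathcal U=\gamma$ and $|E(\bigcup\mathcal U)|=d(p,q)$; (ii) by Proposition~\ref{l:local}, for every $U=N_e\in\mathcal U$ we have $|E(A)\cap E(U)|/|E(U)|\geqslant M$, so the hypothesis of Lemma~\ref{l:lgd} holds with $C=M$. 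The conclusion of that lemma gives $|E(A)|\geqslant (M/2)\,|E(\gamma)|$, and since $A$ consists of edges in $\ag$ we get $|\ag|\geqslant |E(A)|\geqslant (M/2)\,d(p,q)$. Combining with $\dw(p,q)\geqslant|\ag|$ finishes the proof, and the two inequalities together give the bi-Lipschitz equivalence.

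The genuinely hard part of the whole argument is not this final assembly — which is a short application of Lemmas~\ref{l:lgd} and Proposition~\ref{l:local} — but rather Proposition~\ref{l:local} itself, i.e.\ the case analysis (Cases I and II, with sub-cases B, C, D) establishing the uniform local lower bound $M$ on the density of separating edges inside each relator neighborhood. The only subtlety remaining at the global stage is bookkeeping: one must make sure the family $\mathcal U$ genuinely covers $\gamma$ (which it does, edge by edge) and that each $N_e$ is a legitimate nontrivial subpath so that Lemma~\ref{l:lgd} applies verbatim; the factor $1/2$ loss in that lemma is harmless, being absorbed into the statement. I would also remark that the compatibility condition in Definition~\ref{d:cond} is exactly what guarantees the constant produced in Proposition~\ref{l:local} is the positive number $M$, so no further constraint on the constants is needed here.
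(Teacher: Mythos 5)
Your proposal is correct and follows essentially the same route as the paper: fix a geodesic $\gamma$, apply the local density estimate (Proposition~\ref{l:local}) to the covering family $\mathcal U=\{N_e : e\in E(\gamma)\}$, and invoke the local-to-global density principle (Lemma~\ref{l:lgd}) with $C=M$ to get $|\ag|\geqslant (M/2)\,|E(\gamma)|$. Your additional bookkeeping remarks (that $\bigcup\mathcal U=\gamma$ and that each $N_e$ is a genuine subpath by convexity of relators) are accurate and consistent with the paper's argument.
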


\begin{proof}
  The left inequality is clear. Now we prove the right one.
  Let $\gamma$ be a geodesic joining $p$ and $q$.
  The number $|E(\gamma)|$ of edges in $\gamma$ is equal to $d(p,q)$. On the other hand, the number $|\ag|$ of edges in $\gamma$ whose walls meet $\gamma$ in only one edge is at most $\dw(p,q)$. We will thus bound $|\ag|$ from below.

  For any edge $e$ of $\gamma$, let $N_e$ be its relator neighborhood. The collection $\mathcal U=\{N_e\;|\; e\in E(\gamma)\}$ forms a covering family of subpaths of $\gamma$.
  By the local estimate (Proposition~\ref{l:local}) we have that
  \begin{align*}
    \frac{|\ag\cap E(N_e)|}{|E(N_e)|}\geqslant {M}.
  \end{align*}
  Thus, by the local-to-global density principle (Lemma~\ref{l:lgd}), we have
  \begin{align*}
    |\ag|\geqslant \frac{M}{2}\cdot|E(\gamma)|,
  \end{align*}
  that finishes the proof.
\end{proof}

%%%%%%%%%%%%%%%%%%%%%%%%
\section{Main result: the Haagerup property}
\label{s:Haa}

%The following result is the main result of our paper. It is a  consequence of the linear separation property (Theorem~\ref{p:lsp}).

A consequence of the linear separation property (Theorem~\ref{p:lsp}) is the following main result of the paper.

\begin{tw}
  \label{t:haag}
  Let $G$ be a group acting properly on a simply connected $C'(\ov \lambda)$--complex $X$ satisfying the lacunary walling condition. Then $G$ acts properly on a space with 
walls. In particular, $G$ has the Haagerup property.
\end{tw}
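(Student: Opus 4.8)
The plan is to combine the linear separation property (Theorem~\ref{p:lsp}) with the standard construction that turns a proper action on a space with walls into a proper affine isometric action on a Hilbert space, and then invoke the fact that properness of the wall pseudo-metric relative to the path metric transfers properness of the action. First I would observe that $G$ acts on $X$ by automorphisms preserving the $C'(\ov\lambda)$--structure, hence it permutes the relators $r_i$ and preserves the notion of ``being opposite in a relator''; therefore $G$ acts on the edge set of $\xj$ preserving the equivalence relation defining walls, so $G$ acts on the wall set $\mathcal W$ and the action on $(\xz,\mathcal W)$ is an action by automorphisms of the space with walls. This uses only Subsection~\ref{s:walls}: the walls are defined canonically from the combinatorics, so any automorphism of $X$ respects them.

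Next I would check properness of this wall action. By Theorem~\ref{p:lsp} we have $\dw(p,q)\geqslant \tfrac{M}{2}\,d(p,q)$ for all vertices $p,q$, so for a fixed basepoint $x_0\in\xz$ and any $n$, the set $\{g\in G : \dw(gx_0,x_0)\leqslant n\}$ is contained in $\{g\in G : d(gx_0,x_0)\leqslant 2n/M\}$, which is finite because the action of $G$ on $X$ is proper and $\xj$ has bounded degree (so balls in the path metric are finite and stabilizers of vertices are finite). Hence the function $g\mapsto \dw(gx_0,x_0)$ is proper on $G$, which is exactly the statement that $G$ acts properly on the space with walls $(\xz,\mathcal W)$. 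This is the core content of the theorem's first assertion.

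For the ``in particular'' clause I would invoke the well-known passage from spaces with walls to Hilbert spaces, due to Haglund--Paulin and exploited by Cherix--Martin--Valette (cf.\ \cite{ChMV}, and the framework of \cite{Cor} under the name Property PW): the wall pseudo-metric $\dw$ on $\xz$ embeds isometrically, up to the usual $\ell^1$--versus--$\ell^2$ square-root adjustment, into a Hilbert space on which $G$ acts by affine isometries via the associated cocycle $g\mapsto \mathds{1}_{\mathcal W(gx_0\mid x_0)}$, where $\mathcal W(p\mid q)$ is the set of walls separating $p$ from $q$; the norm of this cocycle is $\sqrt{\dw(gx_0,x_0)}$, which is proper on $G$ by the previous paragraph, so the action is metrically proper. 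A second countable locally compact group admitting a proper continuous affine isometric action on a Hilbert space has the Haagerup property by definition, and here $G$ is finitely generated (hence countable discrete), so continuity is automatic; this gives the Haagerup property and completes the proof.

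The only genuine obstacle is the first paragraph, namely being scrupulous that the $G$--action on $X$ really does induce a well-defined action on $\mathcal W$: one must know that the $G$--action permutes the relators together with their wall structures, i.e.\ that it does not merely preserve $\xj$ but the whole coned-off complex and the chosen graphs-with-walls $r_i$. For a presentation-induced action (the setting of the Main Theorem, where $X$ is the universal cover of $Y$) this is immediate since $G=\pi_1(Y^{(1)})/\langle\langle\cdots\rangle\rangle$ acts as deck transformations permuting lifts of the cones; in the stated generality one simply includes it in the hypothesis that $G$ acts on the $C'(\ov\lambda)$--complex, so there is nothing deep here, only a point to state cleanly. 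Everything else is a routine invocation of Theorem~\ref{p:lsp}, properness of the action on $X$, bounded degree, and the standard wall-space-to-Hilbert-space machinery.
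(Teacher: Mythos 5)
Your proposal is correct and follows essentially the same route as the paper: establish that the canonically defined wall structure is $G$--invariant (Proposition~\ref{p:sww}), transfer properness from the path metric to the wall pseudo-metric via the linear separation property (Theorem~\ref{p:lsp}), and conclude the Haagerup property by the standard wall-space-to-Hilbert-space construction of Bo\.zejko--Januszkiewicz--Spatzier and Haglund--Paulin--Valette (cf.\ \cite{ChMV}). The paper's proof is a four-line version of exactly this argument; your extra care about why the $G$--action permutes the walls is a reasonable elaboration rather than a deviation.
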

\begin{proof}
The group $G$ acts properly on the set of vertices $\xz$ of $X$ equipped with the path metric $d(\cdot,\cdot)$.
By Proposition~\ref{p:sww}, this action gives rise to the action by automorphisms on the space with walls $(\xz,\mathcal W)$.
By the linear separation property (Theorem~\ref{p:lsp}), we conclude that $G$ acts properly on $(\xz,\mathcal W)$.
By an observation of Bo\. zejko-Januszkiewicz-Spatzier \cite{BoJaS} and Haglund-Paulin-Valette (cf.\ \cite{ChMV}), the group $G$ has the Haagerup property.
\end{proof}

\begin{de}[Lacunary walling of a presentation]\label{d:lwpres}
A graphical presentation $$\langle Y^{(1)}\; | \; (r_i)_{i\geqslant 1} \rangle$$
is said to satisfy the \emph{lacunary walling condition} if the universal cover $X$ of the 
coned-off space
%\begin{align*}
$Y^{(1)}\cup_{(\phi_i)} \bigcup_i \mr{cone}\,r_i$  
%\end{align*}
satisfies the lacunary walling condition introduced in Definition~\ref{d:cond}. 
\end{de}

Observe that Main Theorem follows immediately from the above, since the given group $G$ acts properly on the corresponding universal
cover, as described in Section~\ref{s:prelim}.

%=======================================================================

\section{An explicit small cancellation labeling of a subdivided family of graphs}
\label{s:app}
The aim of this section is to show that for any (infinite) family of graphs there exists a small cancellation labeling of them,
after subdividing edges in a non-uniform way (Theorem~\ref{p:scsub}). Furthermore, one may enhance the labeling (up to taking a subsequence) to the one satisfying the lacunarity condition (Theorem~\ref{p:scsubl}).
This result allows us (in Section~\ref{s:ex}) to give many -- non-classical -- examples of presentations
satisfying the lacunary walling condition.

All graphs considered in this section are oriented.
For a (oriented) graph $\Gamma$ and $j\in \mathbb N$, by $\Gamma^j$ we denote the \emph{$j$--subdivision} of $\Gamma$, that is
the (oriented) graph obtained by subdividing every edge of $\Gamma$ into $j$ edges, all directed toward the endpoint of the original orientation.

\begin{tw}[Small cancellation labeling of subdivisions]
\label{p:scsub}
Let $\ov \Gamma = (\Gamma_i)_{i\in \mathbb N}$ be a (possibly infinite) family of finite graphs with degree bounded by $d$.
For every $\ov \lambda = (\lambda_i)_{i\in \mathbb N}$ with $\lambda_i\in (0,1)$, there exists a sequence $(j_i)_{i\in \mathbb N}$ of natural numbers with the following property.
There exists a labeling of the family of subdivisions $(\Gamma_i^{j_i})_{i\in \mathbb N}$ by $d+2$ letters satisfying the $C'(\ov \lambda)$--small cancellation condition.
\end{tw}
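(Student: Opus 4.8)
The plan is to label each subdivided graph $\Gamma_i^{j_i}$ one at a time, choosing the subdivision parameter $j_i$ large enough (depending on $\lambda_i$ and the chosen labellings of $\Gamma_1^{j_1},\dots,\Gamma_{i-1}^{j_{i-1}}$) so that every piece is automatically short compared to the girth. The labelling alphabet should be $\{a_1,\dots,a_d, x, y\}$, with $d+2$ letters: the first $d$ letters are used to mark, at each vertex $v$ of $\Gamma_i$ of degree $\le d$, the edges incident to $v$ by distinct ``colours'' so that locally the edge-labelling around a vertex encodes which edge one is on; the two extra letters $x,y$ are then used to write, along the $j_i$ edges of each subdivided edge, a long block that is essentially a ``counter'' identifying that particular edge of that particular graph and distinguishing the two orientations. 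Concretely, one can fix a bijective enumeration of all (edge, index $i$) pairs by positive integers and, on the $m$-th such edge in graph $\Gamma_i$, write along its subdivision the word $x^{a_1}y x^{a_2} y \cdots$ (a self-overlap-free binary-counter-type word) of total length $j_i$, preceded and followed by the vertex-colour letters; the key point is that two different subdivided edges carry words that agree on no long common subword, and a word does not overlap a translate of itself on a long subword.

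First I would make precise the ``distinguishing word'' construction: for each edge one needs a word $w$ over $\{x,y\}$ of prescribed length $L$ such that (a) $w$ has no nontrivial self-overlap of length $\ge \varepsilon L$ (so reading a long piece inside a single relator in two different positions forces an isomorphism), and (b) for $w\ne w'$ coming from different edges, no common subword of length $\ge \varepsilon L$ exists. Both are standard: e.g. use prefixes of a fixed Sturmian or Thue–Morse-type sequence, or simply use for the $m$-th edge the word obtained by writing $m$ in unary-separated binary, padded to length $L$ — any construction whose factor complexity and overlap structure are controlled will do, and the quantitative bound improves as $L\to\infty$. Then I would assemble the labelling of $\Gamma_i^{j_i}$: on the subdivision of each original edge put the corresponding distinguishing word of length $j_i$, and at the original vertices insert the colour letters recording incidences. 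One checks that any path that is a piece — i.e. that maps into two relators $r_k=\Gamma_k^{j_k}$ and $r_\ell=\Gamma_\ell^{j_\ell}$ in two essentially distinct ways — must, if long enough, contain a long subword common either to two distinct edge-words (impossible by (b)) or to a single edge-word read in two incompatible ways (which by (a) forces the overlap to be short, or forces the two embeddings to be compatible via a graph isomorphism, contradicting ``essentially distinct''); the colour letters handle the short pieces that cross a vertex, pinning down the local structure. Hence every piece has length $O(\varepsilon j_i)$ while $\gi(\Gamma_i^{j_i}) = j_i \cdot \gi(\Gamma_i) \ge j_i$, so choosing $j_i$ (equivalently $\varepsilon = \varepsilon(j_i)\to 0$) large enough gives piece length $< \lambda_i\cdot\gi\,r_i$, which is exactly $C'(\ov\lambda)$.

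The one genuinely delicate point, and the main obstacle, is controlling pieces \emph{between different relators} $\Gamma_k^{j_k}$ and $\Gamma_\ell^{j_\ell}$ with $k\ne\ell$: since the relators are defined with respect to a common graph $X^{(1)}$, a long piece shared by $r_k$ and $r_\ell$ would have to read the same label sequence, but because different graphs use disjoint blocks of the edge-enumeration their edge-words never share a long subword — so this reduces to property (b) above, provided the subdivision lengths $j_k$ are chosen so that $\lambda_k j_k$ exceeds the (bounded) length of any vertex-crossing configuration; this is why the choice of $j_i$ must be made inductively, large relative to everything already committed. A secondary subtlety is that the girth bound $\gi(\Gamma_i^{j_i}) \ge j_i\,\gi(\Gamma_i)$ is only useful once $j_i$ is large, and one must make sure the short pieces forced by the colouring near a vertex (length at most a constant times the number of edge-word ``syllables'' crossed, i.e. $O(1)$, independent of $j_i$) are indeed dominated by $\lambda_i j_i$; this holds as soon as $j_i \ge C/\lambda_i$ for the absolute constant $C$ coming from the colouring scheme. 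Assembling these, the required sequence $(j_i)$ is obtained, and the labelling is explicit in the sense that each step is an effective combinatorial procedure.
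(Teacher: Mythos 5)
Your overall strategy coincides with the paper's: reserve $d$ letters to colour the edges around each original vertex, spend the remaining two letters on a long ``identifier'' word along each subdivided edge, and choose the $j_i$ inductively so large that any repeated subword is a vanishing fraction of $j_i\leqslant \gi\,\Gamma_i^{j_i}$. The paper's words $a^{n}ba^{n}b^{2}\cdots a^{n}b^{k}$ are precisely an instance of your ``unary-separated counter'', with the quantitative content that any subword occurring twice in the family has length $O(n+k)$ while the word itself has length $\approx nk+k^{2}/2$. However, two of your concrete suggestions do not work as stated. First, prefixes of a fixed Sturmian or Thue--Morse sequence fail your property (b) completely: the shorter prefix is literally a subword of the longer one, and more generally low factor complexity forces any two long windows of such a sequence to share all short factors and indeed long ones. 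What you need is the opposite of controlled (low) factor complexity, namely words that are far apart in the sense of sharing no long factor; the binary/unary counter version is the one that survives.

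Second, and this is the genuine gap, you never address the constraint that $\Gamma_i^{j_i}$ is the \emph{uniform} $j_i$--subdivision: every edge of $\Gamma_i$ must be cut into exactly $j_i$ pieces, so the identifier words assigned to the distinct edges of a single $\Gamma_i$ must all have exactly the same length $j_i-2$. Your fix of ``padding to length $L$'' is dangerous: padding with a constant block (say $x^{L-|w|}$) reintroduces arbitrarily long common subwords between different edge-words, destroying property (b). The paper resolves this with a small counting claim: for any $E$ and $N$ one can find $E$ pairwise distinct parameters $n_1,\dots,n_E\geqslant N$ and $k_1,\dots,k_E\geqslant N$ with $|I_{n_i,k_i}|$ all equal, so that equal length is achieved without padding and the ``distinct $n$'' hypothesis still caps common subwords at $\min\{2n_i+k_i,2n_j+k_j\}$. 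Some such device is indispensable in your write-up; once it is supplied (together with the observation that a long piece must put at least half of its length inside a single subdivided edge, since it cannot contain a whole identifier word), the rest of your argument matches the paper's.
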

\begin{proof}
For each $n,k \in \mathbb N$, let $I_{n,k}$ denote the labeling of the segment with all edges directed toward common end-vertex
of length $|I_{n,k}|=kn+k^2/2+k/2$ defined as
(here $a^i$ denotes $a$ labeling $i$ consecutive edges, similarly for $b^j$; orientation from left to right):
\begin{align*}
a^nba^nb^2a^nb^3\ldots a^nb^{k-1}a^nb^{k}.
\end{align*}
Observe that if a labeling of a subsegment $I$ appears in $I_{n,k}$ twice (in different places) then the length 
of $I$ is at most 
\begin{align}
\label{f:1}
(k-2)+n+(k-1),
\end{align}
 which is the length of the sequence $b^{k-2}a^nb^{k-1}$.
%This is the maximal length of an \emph{inner-piece} in $I_{n,k}$.
If a labeling of a segment $I$ appears in two labelings $I_{n,k}$ and $I_{n',k'}$, with $n\neq n'$, then
its length is at most  
\begin{align}
\label{f:2}
\min \{2n+k-1, 2n'+k'-1\},
\end{align}
that corresponds to the sequence $a^nb^{k-1}a^n$ or $a^{n'}b^{k'-1}a^{n'}$.
Combining (\ref{f:1}) and (\ref{f:2}) we have that if a subsegment $I$ of $I_{n,k}$ appears in two different
places in the family $\{ I_{n,k}\; | \; n\neq n' \; \rm{if}\; (n,k)\neq (n',k') \}$ then its length is less than
\begin{align}
\label{f:3}
2(k+n).
\end{align}

The following technical claim follows from an elementary calculation.

\medskip

\noindent 
{\bf Claim.} $\forall E,N\in \mathbb N$ there exist $E$ pairwise distinct numbers 
$n_1,n_2,\ldots, n_E\geqslant N$ and  
$E$ numbers  $k_1,k_2,\ldots,k_E\geqslant N$ such that $|I_{n_i,k_i}|=|I_{n_j,k_j}|$, for all $i,j$.
\medskip

\begin{figure}[h!]
\centering
\includegraphics[width=0.6\textwidth]{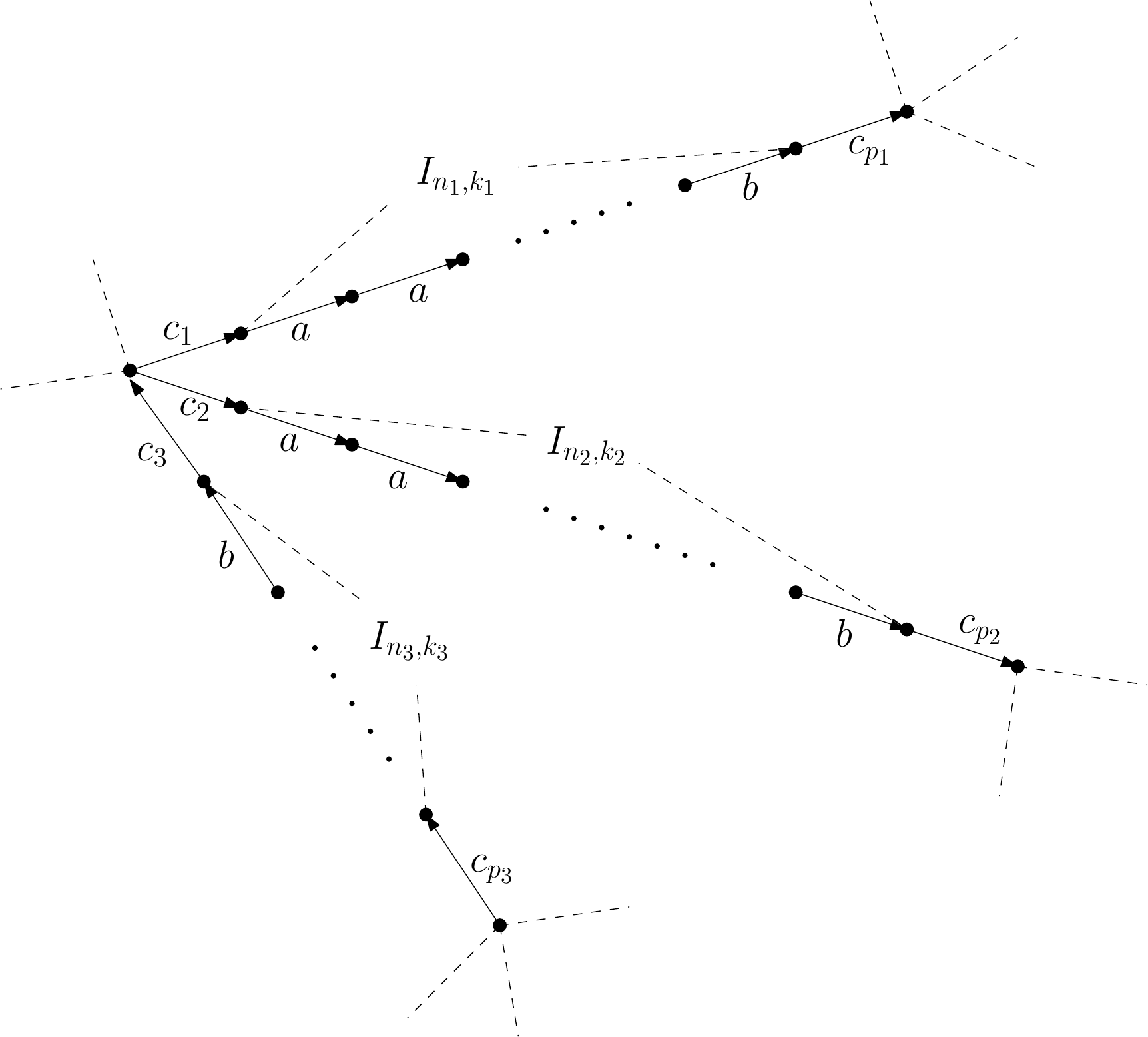}
\caption{Labeling of $\Gamma_i^{j_i}$.}
\label{f:app}
\end{figure}

Now we come to the actual subdivision and labeling of $(\Gamma_i)_{i\in \mathbb N}$. We proceed inductively.
Let $c_1,\ldots, c_d$ be pairwise different and different from $a,b$, letters.
We label $\Gamma_i^{j_i}$ such that edges in the subdivision adjacent to an original vertex of $\Gamma_i$ are all labeled
by different letters among $c_1,\ldots, c_d$ (this is to prevent foldings). Then we use labelings
$I_{n,k}$ to label further subdivided edges of each $\Gamma_i$ --- see Figure~\ref{f:app}. 
Assume that
we defined the required numbers $j_1,\ldots,j_M$, and we found the required labeling of the subfamily $(\Gamma_i^{j_i})_{i\leqslant M}$.
%Moreover we assume that the numbers $n,k$ for $I_{n,k}$ used to label $\Gamma_i$ satisfy ...;
%
%
We find now $j_{m+1}$ and the required labeling of $(\Gamma_i^{j_i})_{i\leqslant M+1}$, that is, we show
an appropriate labeling of $\Gamma_{M+1}^{j_{M+1}}$.
Let $N$ be greater than any index $n$ appearing in $I_{n,k}$ used for labeling the family $(\Gamma_i^{j_i})_{i\leqslant M}$, and greater
than $8/\lambda_{M+1}$.

Let $E$ be the number of edges in $\Gamma_{M+1}$. Let $n_1,\ldots,n_E$, and $k_1,\ldots,k_E$ be as in Claim (for $E,N$ as specified
above).
Then we set $j_{M+1}:=|I_{n_i,k_i}|+2$ and we label each of $E$ subdivided edges of $\Gamma_{M+1}$, using one
of $I_{n_i,k_i}$ as:
\begin{align*}
c_ma^{n_i}ba^{n_i}b^2a^{n_i}b^3\ldots a^{n_i}b^{k_i-1}a^{n_i}b^{k_i}c_p.
\end{align*}

We check now that $(\Gamma_i^{j_i})_{i\leqslant M+1}$ satisfies the small cancellation condition. Let $p$ be a path in some
$\Gamma_i^{j_i}$ such that $p$ appears also elsewhere --- in the same graph or in another one. 
%It is enough to show that then $|p|\leqslant \lambda_i \cdot j_i$. By contradiction --- assume that $|p|> \lambda_i \cdot j_i$.
Then there is an edge of $\Gamma_i$ such that the intersection of $p$ with the $j_i$--subdivision $e$ of this edge has length at least
$|p|/2$. If for the labeling of $e$ we used the labeling $I_{n,k}$ (with $j_i=|I_{n,k}|+2$) then we have a subsegment 
of $I_{n,k}$ of length
at least $|p|/2-1$ appearing also in another 
%place in the family 
%$(\Gamma_i^{j_i})_{i\leqslant M+1}$. It follows that some subsegment $I'$ of $I$ of length at least 
place in the family $\{ I_{n,k}\; | \; n\neq n' \; \rm{if}\; (n,k)\neq (n',k') \}$.
By the formula (\ref{f:3}), it means that
\begin{align*}
%\label{e:6.4}
|p|/2-1 < 2(k+n).
\end{align*}
Since $j_i=|I_{n,k}|+2=kn+k^2/2+k/2+2$ we obtain
\begin{align}
\label{e:6.5}
\begin{split}
|p|&< 4(k+n)+2=\frac{4(k+n)+2}{j_i}\cdot j_i=\frac{4(k+n)+2}{kn+k^2/2+k/2+2}\cdot j_i\\&
\\&=\frac{8}{k}\cdot\frac{(k^2+nk)/2+k/4}{kn+k^2/2+k/2+2}\cdot j_i < \frac{8}{k}\cdot j_i<\lambda_i \cdot j_i<\lambda_i \cdot \gi\,\Gamma_i^{j_i},
\end{split}
\end{align}
since $k>8/\lambda_i$. This proves the small cancellation condition for $(\Gamma_i^{j_i})_{i\leqslant M+1}$ and, by induction, finishes the proof of the theorem.
\end{proof}

Using the same method one can prove the following stronger result that will be used for providing examples
in Section~\ref{s:ex}.

\begin{tw}[Lacunary labeling of subdivisions]
\label{p:scsubl}
Let $\ov \Gamma = (\Gamma_i)_{i\in \mathbb N}$ be an infinite family of finite graphs with degree bounded by $d$ and
girth tending to infinity as $i\to\infty$.
For every $\ov \lambda = (\lambda_i)_{i\in \mathbb N}$ with $\lambda_i\in (0,1)$, 
there is $n_0>0$ and there exists a sequence $(j_i)_{i \geqslant n_0}$ of natural numbers with the following property.
There exists a labeling of $(\Gamma_i^{j_i})_{i \geqslant n_0}$ by $d+2$ letters satisfying the $C'(\ov \lambda)$--small cancellation condition and, moreover,  satisfying the lacunarity condition of Definition~\ref{d:cond}, for a given $K>0$.
\end{tw}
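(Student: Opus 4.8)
The plan is to mimic the proof of Theorem~\ref{p:scsub} essentially verbatim, keeping the same inductive labeling by the segments $I_{n,k}$, and then to arrange the extra lacunarity bookkeeping on top of it. Recall that lacunarity (Definition~\ref{d:cond}) requires $b_{r_i}(\lambda(r_i)\cdot\gi\,r_i)\leqslant K\cdot\gi\,r_i$, i.e.\ the number of edges in a ball of radius $\lambda_i\cdot\gi\,\Gamma_i^{j_i}$ inside $\Gamma_i^{j_i}$ must be at most $K\cdot\gi\,\Gamma_i^{j_i}$. Since the degree of $\Gamma_i^{j_i}$ is bounded by $d$, a ball of radius $t$ in $\Gamma_i^{j_i}$ has at most $d(d-1)^{t-1}$ edges if the ball ``sees'' a branch vertex, but as long as the radius $t$ is smaller than roughly $j_i$ (the subdivision parameter, which is also a lower bound for, and in fact essentially equal to, the distance between consecutive original vertices of $\Gamma_i$), the ball is contained in a single subdivided edge, hence is an interval, hence has at most $2t+1$ edges. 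So the whole point is to guarantee $\lambda_i\cdot\gi\,\Gamma_i^{j_i}\leqslant j_i/2$, say; then $b_{r_i}(\lambda_i\cdot\gi\,\Gamma_i^{j_i})\leqslant 2\lambda_i\cdot\gi\,\Gamma_i^{j_i}+1\leqslant (2\lambda_i+1)\gi\,\Gamma_i^{j_i}$, and after possibly shrinking $\lambda_i$ we get the bound with constant $K$ (indeed for any prescribed $K>0$ one just needs $\lambda_i$ small, which we are free to do since decreasing $\lambda$ only strengthens small cancellation).

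First I would observe that $\gi\,\Gamma_i^{j_i} = j_i\cdot\gi\,\Gamma_i$ (subdividing each edge into $j_i$ pieces multiplies the girth by $j_i$), so $\lambda_i\cdot\gi\,\Gamma_i^{j_i}\leqslant j_i/2$ is equivalent to $\lambda_i\cdot\gi\,\Gamma_i\leqslant 1/2$, i.e.\ $\gi\,\Gamma_i\leqslant 1/(2\lambda_i)$. This fails for large $i$ since $\gi\,\Gamma_i\to\infty$ — and that is exactly why the statement only claims a labeling of the \emph{tail} $(\Gamma_i^{j_i})_{i\geqslant n_0}$ and why one replaces the given $\ov\lambda$ by a faster-decreasing sequence. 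The correct move is: given $\ov\lambda$ and $K$, define a new sequence $\lambda_i' := \min\{\lambda_i,\ \varepsilon,\ 1/(2\,\gi\,\Gamma_i)\}$ for a small fixed $\varepsilon=\varepsilon(K)$, and feed $\ov\lambda'$ into Theorem~\ref{p:scsub}. Since $\lambda_i'\leqslant\lambda_i$, a $C'(\ov\lambda')$--labeling is in particular a $C'(\ov\lambda)$--labeling, so no generality is lost. Discard finitely many indices if some $\lambda_i'$ is not positive or the construction needs $\gi\,\Gamma_i$ large enough to start; this produces the threshold $n_0$.

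The key steps, in order, are: (1) reduce to producing a $C'(\ov\lambda')$--labeling for the replacement sequence $\ov\lambda'$ just described, citing Theorem~\ref{p:scsub} for its existence and for the numbers $(j_i)$; (2) compute $\gi\,\Gamma_i^{j_i}=j_i\gi\,\Gamma_i$ and note $j_i=|I_{n_i,k_i}|+2$ is the number of edges the construction puts along a subdivided edge, so any two consecutive original vertices of $\Gamma_i$ are at distance $j_i$ in $\Gamma_i^{j_i}$; (3) deduce that a ball of radius $t\leqslant j_i/2$ centered anywhere in $\Gamma_i^{j_i}$ contains no branch vertex in its interior, hence is an interval and has $\leqslant 2t+1$ edges; (4) plug in $t=\lambda_i'\cdot\gi\,\Gamma_i^{j_i}=\lambda_i'\cdot j_i\cdot\gi\,\Gamma_i\leqslant j_i/2$ by the choice $\lambda_i'\leqslant 1/(2\gi\,\Gamma_i)$, obtaining $b_{\Gamma_i^{j_i}}(\lambda_i'\cdot\gi\,\Gamma_i^{j_i})\leqslant 2\lambda_i'\cdot\gi\,\Gamma_i^{j_i}+1\leqslant K\cdot\gi\,\Gamma_i^{j_i}$ once $\varepsilon$ (hence $\lambda_i'$) is chosen with $2\varepsilon+1/\gi\,\Gamma_i^{j_i}\leqslant K$, which holds for $i\geqslant n_0$ after possibly enlarging $n_0$; (5) conclude the lacunarity condition and, since $\lambda_i'\leqslant\lambda_i$, the $C'(\ov\lambda)$--condition. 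The main obstacle — really the only subtlety — is step (3)/(4): being careful that the radius is measured in the \emph{subdivided} graph and that the ``no branch vertex'' argument genuinely forces the ball to be an interval (one must check both endpoints of the interval and handle the degenerate case where $\Gamma_i$ has degree-$1$ or degree-$2$ vertices, but since degrees are bounded by $d$ and a ball not reaching an original vertex lies in a single edge, this is routine). Everything else is a verbatim repetition of the proof of Theorem~\ref{p:scsub}, as the statement itself advertises.
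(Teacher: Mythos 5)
Your proposal follows essentially the same route as the paper's proof: both take the labeling produced by Theorem~\ref{p:scsub}, drive the effective small cancellation constant down to order $1/\gi\,\Gamma_i$ by choosing $j_i$ large (equivalently, by feeding a smaller vector $\ov{\lambda}'$ into Theorem~\ref{p:scsub}), and then verify lacunarity because a ball of radius comparable to $j_i$ in $\Gamma_i^{j_i}$ has only linearly many edges, while $\gi\,\Gamma_i\to\infty$ supplies the threshold $n_0$. The only presentational difference is that the paper packages the ball estimate as $b_{\Gamma_i^{j_i}}(t)\leqslant j_i\cdot b_{\Gamma_i}(t/j_i)$ and finishes the computation in the unsubdivided graph, whereas you estimate directly in the subdivision; these are interchangeable.

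One step of yours is wrong as stated, although easily repaired --- and it is precisely the step you single out as the main subtlety. A ball of radius $t\leqslant j_i/2$ in $\Gamma_i^{j_i}$ need not be an interval: if it is centered at, or within distance less than $t$ of, an original vertex of $\Gamma_i$ of degree at least $3$, it contains that branch vertex and consists of up to $d$ segments emanating from it, so it can have on the order of $d\cdot t$ edges rather than $2t+1$. This only changes the constant in your step (4): one gets $b_{\Gamma_i^{j_i}}(t)\leqslant d\,t+O(d)$ for $t\leqslant j_i/2$, and the desired inequality $b_{\Gamma_i^{j_i}}(\lambda_i'\cdot\gi\,\Gamma_i^{j_i})\leqslant K\cdot\gi\,\Gamma_i^{j_i}$ still follows provided you choose $\varepsilon$ of size about $K/d$ rather than $K/2$ (and enlarge $n_0$ accordingly). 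With that correction the argument is complete and agrees with the paper's.
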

\begin{proof}
We use the same labeling of subdivisions as in the proof of Theorem~\ref{p:scsub}. We follow the notations of that proof.
For simplicity, by $\Gamma$ we denote $\Gamma_i$, for a given $i$, and by $\Gamma'$ we denote its $j_i$--subdivision $\Gamma_i^{j_i}$.
By the formula (\ref{e:6.5}) the labeling of $\Gamma'$ satisfies the $C'(\lambda)$--small cancellation condition for $\lambda=\frac{4(k+n)+2}{kn+k^2/2+k/2+2}$.
Observe that 
\begin{align*}
%\label{e:6.8}
\gi\,\Gamma'=j_i\cdot \gi\, \Gamma \;\;\;\;\;\;\mr{and} \;\;\;\;\;\; b_{\Gamma'}(t)\leqslant j_i \cdot b_{\Gamma}(t/j_i),
\end{align*}
where $b_{\Gamma},b_{\Gamma'}$ are the functions introduced in Definition~\ref{d:cond}. Therefore, we obtain the following
\begin{align*}
b_{\Gamma'}(\lambda\cdot \gi\,\Gamma')&\leqslant j_i \cdot b_{\Gamma}(\lambda \cdot j_i \cdot \gi\, \Gamma /j_i)\\&=
j_i\cdot \gi\, \Gamma \cdot \frac{b_{\Gamma}(\lambda \cdot \gi\, \Gamma)}{\gi\, \Gamma}=\frac{b_{\Gamma}(\lambda \cdot \gi\, \Gamma)}{\gi\, \Gamma}\cdot \gi\, \Gamma'.
\end{align*}
For sufficiently large $j_i$, that is, for large $k,n$, we have that $\lambda$ is small. For large $i$ the girth is arbitrarily big.
Thus, setting $n_0$ and $j_i$ (for $i\geqslant n_0$) large enough we can obtain
$\frac{b_{\Gamma}(\lambda \cdot \gi\, \Gamma)}{\gi\, \Gamma}\leqslant K$. Hence the lacunarity condition is satisfied.
\end{proof}
\medskip

In \cite{OllivierWise} it is shown that for a finite family of graphs, random labeling leads to a small cancellation
 labeling of some subdivided graph (up to folding). Their method does not extend to infinite families. 
 A random labeling of an infinite expander family, used by Gromov~\cite{Gro,AD} in his construction of Gromov's monster,
satisfies the so-called \emph{geometric} small cancellation condition (which is of rather different nature).
Thus, it is very interesting to know
whether a random labeling of some subdivisions of an infinite family of bounded degree graphs does satisfy the $C'(\ov \lambda)$--small cancellation condition.

%%%%%%%%%%%%%%%%%%
\section{Examples}
\label{s:ex}
%%%%%%%%%%%%%%%%%%%

In this section, we first give examples of infinite graphical small cancellation presentations that do not reduce to classical
small cancellation presentations treated in \cites{W-sc, AO}. Then we show that the construction of $\mathbb{Z}/2$--homology covers 
from \cite{AGS} produces a sequence of regular graphs satisfying the $(\beta,\delta)$--separation property required
by the lacunary walling condition. Such a sequence is a natural candidate for a sequence $(r_i)_{i\in \mathbb N}$  satisfying the hypothesis of Theorem~\ref{t:nota}, and, hence,
for the construction of a group which is coarsely embeddable into a Hilbert space but not coarsely amenable.
 
\medskip

\noindent
{\bf Example 1.} For any natural number $k$, let $\Theta_k$ be the graph being the union of three segments: $I_a,I_b,I_c$ of length $3k$ each, with three start-points identified and three end-points identified --- see Figure~\ref{f:ex1}.
We label edges of $I_a,I_b,I_c$ in order as, respectively:
\begin{align*}
a_1a_2\ldots a_ke_1e_2\ldots e_ka_1'a_2'\ldots a_k',\\
b_1b_2\ldots b_ke_1'e_2'\ldots e_k'b_1'b_2'\ldots b_k',\\
c_1c_2\ldots c_ke_1''e_2''\ldots e_k''c_1'c_2'\ldots c_k'.
\end{align*}  
Using this labeling we define walls. There are four kinds of them, for every $i=1,\ldots,k$ --- see Figure~\ref{f:ex1}:
\begin{itemize}
\item
the wall $w_{e_i}$ consists of edges: $e_i,e_i',e_i''$;
\item
the wall $w_{a_i}$ consists of edges: $a_i,a_i'$;
\item
the wall $w_{b_i}$ consists of edges: $b_i,b_i'$;
\item
the wall $w_{c_i}$ consists of edges: $c_i,c_i'$.
\end{itemize}   
It is easy to observe that the 
following inequalities hold:
\begin{align*}
d(e_i,e_i')+1 = d(e_i',e_i'')+1 = d(e_i'',e_i)+1\geqslant 2k+1>\frac{1}{3}\cdot 6k=\frac{1}{3}\cdot \gi\,\Theta_k  ,\\
d(a_i,a_i')+1=d(b_i,b_i')+1=d(c_i,c_i')+1=2k =\frac{1}{3}\cdot \gi\,\Theta_k.
\end{align*}  
Moreover, for every geodesic the fraction of edges meeting this geodesic twice is at most $2/3$.
Thus, the system of walls satisfies the \bds property for $\beta = 1/3$ and $\delta =2/3$.
\medskip

Using Theorem~\ref{p:scsubl}, one can now find a sequence $(k_j)_{j\in \mathbb N}$ of natural numbers 
and an appropriate explicit labeling
(different from the one used just for defining walls)
of graphs $\Theta_{k_j}$ that defines an infinite graphical small cancellation presentation of a group
satisfying the lacunary walling condition.

\begin{figure}[h!]
\centering
\includegraphics[width=0.5\textwidth]{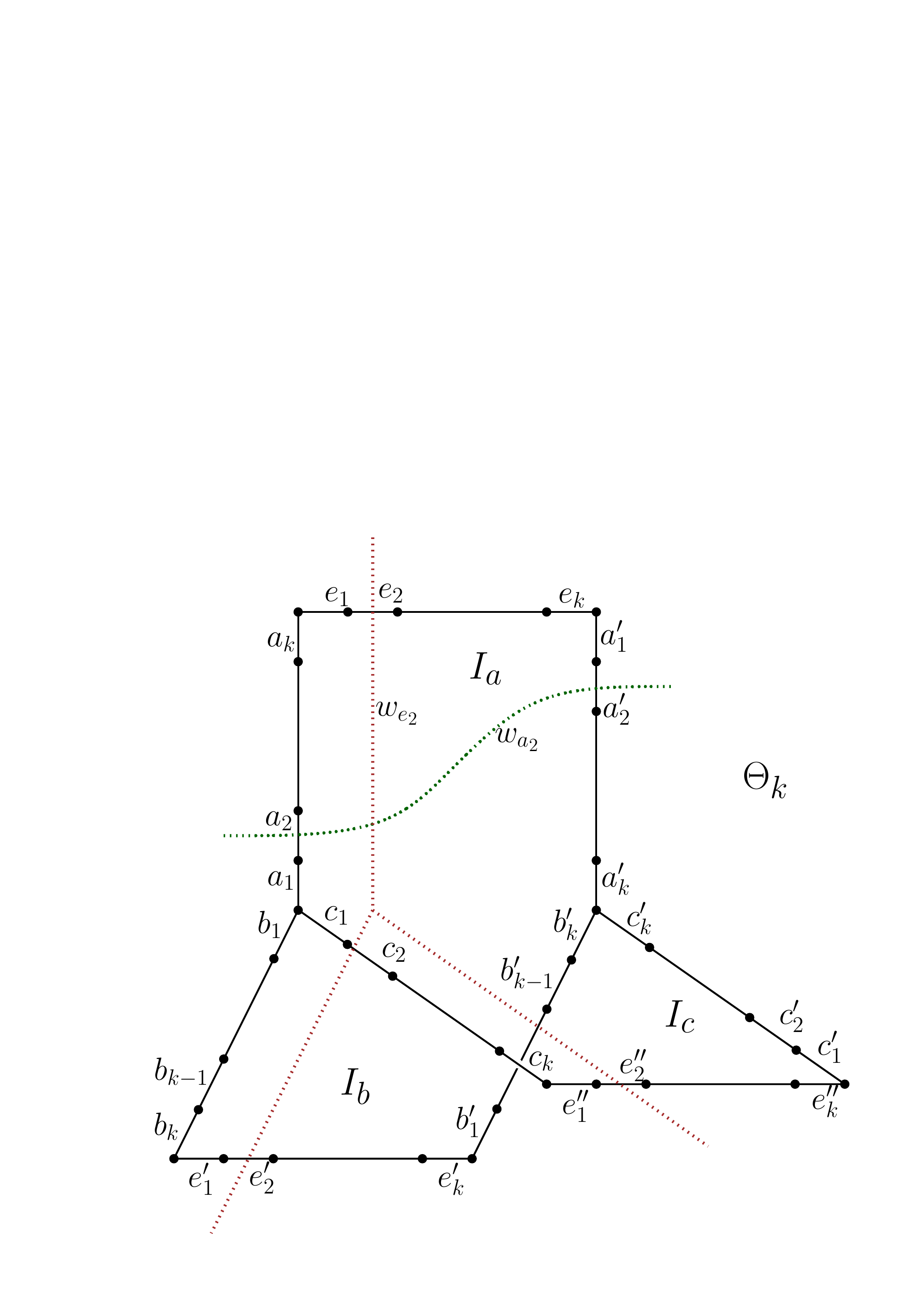}
\caption{Example 1.}
\label{f:ex1}
\end{figure}

\medskip

\noindent
{\bf Example 2.} Let $I^n$ be the $1$--skeleton of an $n$--cube. Subdividing every its edge into $k$ edges we obtain
the graph $I^n_k$ --- see Figure~\ref{f:ex2}. This graph possesses a natural wall structure --- opposite edges in every $4k$ cycle
belong to the same wall (edges $e,e',e'',e'''$ form the wall $w$ in Figure~\ref{f:ex2}).
For any median graph $r$ (i.e.\ the $1$--skeleton of a CAT(0) cube complex) one can equip each $k$--subdivision of $r$ (every edge subdivided into $k$ edges) with a wall system,
applying the above rule to each cube (or simpler -- just to each square) of $r$.
Observe that for any two edges $e,e'$ of the same wall, we have
\begin{align*}
d(e,e')+1\geqslant 2k,
\end{align*}  
which means that the $\beta$--condition from Definition~\ref{d:sep} is satisfied for $\beta = 1/2$.
Furthermore, one easily sees that the
$\delta$--condition from Definition~\ref{d:sep} is satisfied for $\delta = 1/3$.

\begin{figure}[h!]
\centering
\includegraphics[width=0.4\textwidth]{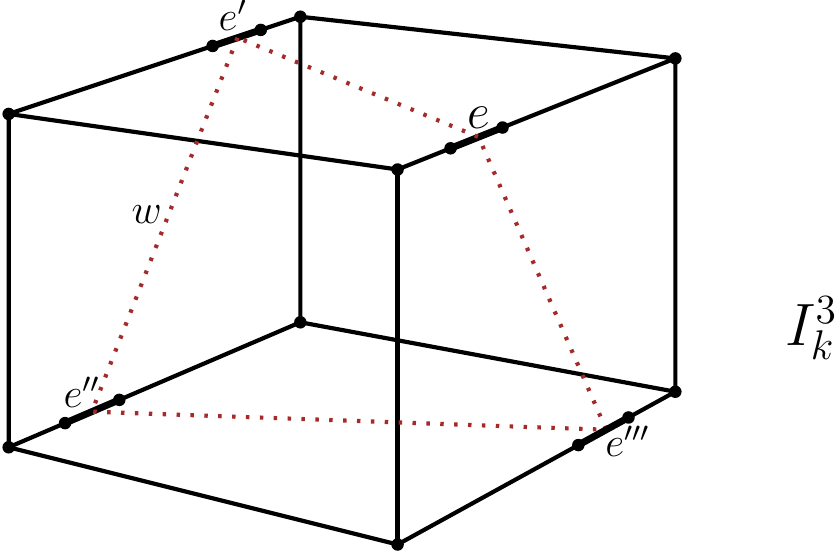}
\caption{Example 2.}
\label{f:ex2}
\end{figure}

Using Theorem~\ref{p:scsubl}, any sequence of median graphs $(r_i)_{i\in \mathbb N}$ may be transformed (up to taking a subsequence) into a sequence of their subdivisions $(r_i')_{i\in \mathbb N}$ with a small
cancellation labeling, satisfying the lacunary walling condition.
The resulting group acts then properly on the corresponding space with walls and hence, possesses the Haagerup property.\medskip

\noindent
{\bf Example 3.}  We recall the construction of the box space which is coarsely embeddable into a Hilbert space but not coarsely amenable~\cite{AGS}.
Let $\mathbb F_m$ be the free group of rank $m\geqslant 2$. Arzhantseva-Guentner-\v{S}pakula's box space is a regular graph 
$$\Theta= \bigsqcup^{\infty}_{n=1}\Theta_n:=\bigsqcup^{\infty}_{n=1} \mathbb{F}_m/((\mathbb{F}_m^{(2)})^{(2)})^{\ldots (2)},$$
which is the disjoint union of the Cayley graphs $\Theta_n$ of quotients of $\mathbb F_m$  by the subgroups generated iteratively, over $n\geqslant 1$, 
by the squares of the group elements. That is, $\mathbb{F}_m^{(2)}$ is the normal subgroup of $\mathbb{F}_m$ generated by all the squares of elements
of $\mathbb{F}_m$, then $\displaystyle{(\mathbb{F}_m^{(2)})^{(2)}}$ is such a subgroup of $\mathbb{F}_m^{(2)}$, and so on. The corresponding Cayley graphs are viewed with respect to the 
canonical image of the free generators of $\mathbb F_m$.

The graph 
$\Theta_n$ is the $\mathbb{Z}/2$--homology cover of  $\Theta_{n-1}$ (with  $\Theta_0$ being the bouquet of $m$ circuits), that is, a regular cover of $\Theta_{n-1}$ whose the group of deck transformations is the\
${\rm rank}(\pi_1(\Theta_{n-1}))$--fold direct sum of $\mathbb{Z}/2\mathbb{Z}$'s.  

The graph $\Theta$ is the graph with walls~\cite{AGS}*{Section 3}. For each edge $e\in E(\Theta_{n-1})$ and the covering map $\pi_n\colon \Theta_n\to \Theta_{n-1},$
 the wall $w_e$ is defined by $w_e:=\pi^{-1}_n(e)\subseteq E(\Theta_n)$ and  $\{ w_e \mid e \in E(\Theta_{n-1})\}$ provides the wall structure on $\Theta_n$, for all $n\geqslant 1$.

The following general observation shows that this wall structure on $\Theta$ does satisfy the $\beta$--condition from Definition~\ref{d:sep}. 

\begin{lem}\label{lem:bbox}
Let $\pi\colon \wt \Gamma \to \Gamma$ be a $\mathbb Z/2$--homology cover endowed with the wall structure as above.
Then the $\beta$--condition holds, with $\beta=1/2.$
\end{lem}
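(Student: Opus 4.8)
The plan is to show that in a $\mathbb{Z}/2$--homology cover $\pi\colon \wt\Gamma\to\Gamma$, any two edges $e,e'$ lying in the same wall $w_f = \pi^{-1}(f)$ for some edge $f\in E(\Gamma)$ are far apart, precisely because identifying them would create a short cycle in $\wt\Gamma$ that projects to a cycle in $\Gamma$ carrying a $\mathbb{Z}/2$--homology class which, by construction of the cover, cannot lift to a closed loop. First I would fix two distinct lifts $e,e'$ of an edge $f$ and let $\alpha$ be a shortest path in $\wt\Gamma$ from (an endpoint of) $e$ to (an endpoint of) $e'$, so that $|\alpha| = d(e,e')$. Concatenating $\alpha$ with the two edges $e,e'$ produces (after choosing endpoints compatibly) a closed path $\hat\alpha$ in $\wt\Gamma$ of length at most $d(e,e') + 2$, and its projection $\pi(\hat\alpha)$ is a closed path in $\Gamma$ of the same length that traverses the edge $f$ an \emph{odd} number of times (once from each of $e,e'$, plus possibly more from the interior of $\alpha$, but the parity is controlled because the two endpoints of $\hat\alpha$ lie in distinct fibers differing exactly by the deck transformation dual to $f$).

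The key algebraic input is the description of the deck group: $\wt\Gamma\to\Gamma$ is the cover associated to the subgroup $\{[\sigma]\in H_1(\Gamma;\mathbb{Z}/2) : \sigma \text{ lifts to a loop}\}$, which is exactly the kernel of $H_1(\Gamma;\mathbb{Z}/2)\to \bigoplus_{E(\Gamma)}\mathbb{Z}/2$ \emph{modulo the image of nothing} — more precisely, since $\Theta_n$ is the \emph{full} $\mathbb{Z}/2$--homology cover, a closed path in $\Gamma$ lifts to a closed path in $\wt\Gamma$ if and only if its $\mathbb{Z}/2$--homology class vanishes, i.e.\ if and only if it traverses every edge of $\Gamma$ an even number of times. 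Therefore the loop $\pi(\hat\alpha)$, which traverses $f$ an odd number of times, has nonzero $\mathbb{Z}/2$--homology class and hence \emph{cannot} be null-homotopic in a way detected only by short relations; concretely it must contain, as a sub-loop, a cycle of $\Gamma$ through $f$, and the shortest such cycle has length $\geqslant \gi\,\Gamma$. This forces $d(e,e') + 2 \geqslant |\hat\alpha| \geqslant \gi\,\Gamma$, which is the desired inequality $d(e,e')+1\geqslant \tfrac12\gi\,\Gamma$ once one observes $\gi\,\wt\Gamma \geqslant \gi\,\Gamma$ and that the relevant girth in the $\beta$--condition is that of the relator $\wt\Gamma$ itself; in fact since the cover is a regular graph one gets the cleaner bound $d(e,e')+1 \geqslant \gi\,\wt\Gamma / 2 \geqslant \gi\,\Gamma/2$, wait — I should instead argue directly that any path in $\wt\Gamma$ joining the two endpoints of $e$ and $e'$ together with $e, e'$ forms a cycle of length $\geqslant \gi\,\wt\Gamma$, giving $d(e,e') + 2 \geqslant \gi\,\wt\Gamma$, hence $d(e,e')+1 \geqslant \tfrac12\gi\,\wt\Gamma$ whenever $\gi\,\wt\Gamma \geqslant 2$.

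The main obstacle I anticipate is making the parity/homology argument airtight: one must verify that the closed path $\pi(\hat\alpha)$ genuinely has nontrivial $\mathbb{Z}/2$--homology class and is not merely a path that backtracks over $f$ an odd number of times in a degenerate way. The clean way around this is to avoid projecting altogether: work entirely upstairs in $\wt\Gamma$, noting that $e$ and $e'$ are distinct edges, and that $e\cup\alpha\cup e'$ (suitably oriented) is a closed edge-path which, after removing backtracks, contains an embedded cycle; since $\wt\Gamma$ is a graph with no loops or multiple edges this cycle has length at least $\gi\,\wt\Gamma$, and because $e$ and $e'$ lie in different fibers the cycle is forced to contain at least one of $e, e'$, so its length is at most $d(e,e')+2$. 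Combining, $d(e,e')+2 \geqslant \gi\,\wt\Gamma$. The only remaining point is to connect $\gi\,\wt\Gamma$ to the girth appearing in the $\beta$--condition of Definition~\ref{d:sep}, where the relator graph $r$ \emph{is} $\wt\Gamma$, so $\beta = 1/2$ follows directly from $d(e,e') + 1 \geqslant \tfrac12\bigl(d(e,e')+2\bigr) \geqslant \tfrac12\gi\,\wt\Gamma = \tfrac12 \gi\, r$ — here the first inequality uses $d(e,e') \geqslant 0$. This completes the proof sketch; the verification that the embedded cycle must hit $e$ or $e'$ (equivalently, that $e$ and $e'$ are not joined by a path in $\wt\Gamma$ avoiding both) is where the homology-cover hypothesis is genuinely used, since two lifts of the same edge always lie in distinct fibers of the (free) deck action.
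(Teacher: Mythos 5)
Your final ``clean'' argument is based on a false premise, and the inequality it produces is actually false. Two distinct edges $e,e'$ joined by a geodesic $\alpha$ do \emph{not} form a closed edge-path: if $e=\{u,v\}$, $e'=\{u',v'\}$ and $\alpha$ runs from $v$ to $u'$, then $e\cdot\alpha\cdot e'$ is a path from $u$ to $v'$, and there is no reason for $u=v'$. Consequently there is no cycle of length at most $d(e,e')+2$ in sight, and the claimed bound $d(e,e')+2\geqslant \gi\,\wt\Gamma$ fails in the simplest example: for $\Gamma$ a cycle of length $g$, the $\mathbb Z/2$--homology cover is the double cover $C_{2g}$, the two lifts of an edge are antipodal, so $d(e,e')=g-1$ and $d(e,e')+2=g+1<2g=\gi\,\wt\Gamma$. (Note $d(e,e')+1=g=\tfrac12\gi\,\wt\Gamma$ here, so the bound $\beta=1/2$ is sharp and cannot be reached by any argument purporting to find a short cycle upstairs.)

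Your first paragraph is closer to the right idea --- project down and use that $\pi(e)=\pi(e')$ forces $\pi$ of the geodesic (possibly extended by one edge of $e$) to contain a closed path in $\Gamma$, whence $d(e,e')+1\geqslant \gi\,\Gamma$ --- but this only bounds $d(e,e')+1$ from below by $\gi\,\Gamma$, whereas the $\beta$--condition requires a bound in terms of the girth of the relator $\wt\Gamma$ itself. You only record the inequality $\gi\,\wt\Gamma\geqslant\gi\,\Gamma$, which points in the useless direction. The missing ingredient is the identity $\gi\,\wt\Gamma=2\cdot\gi\,\Gamma$: the paper obtains it by taking a shortest cycle $\gamma$ in $\Gamma$, removing one edge, extending the remaining path to a spanning tree, and observing that the $\mathbb Z/2$--homology cover (which is independent of the choice of spanning tree) doubles $\gamma$. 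With that identity, $d(e,e')+1\geqslant\gi\,\Gamma=\tfrac12\gi\,\wt\Gamma$ and the lemma follows; without it, neither of your two lines of attack closes.
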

\begin{proof}
Observe that  $\gi \, \wt \Gamma = 2 \cdot \gi \, \Gamma$.
Indeed, let $\gamma$ be a cycle in $\Gamma$ of length $\gi \, \Gamma$. We remove an arbitrary edge from $\gamma$, include the remaining path
into a spanning tree of $\Gamma$, and take the $\mathbb Z/2$--homology cover of $\Gamma$ with respect to this spanning tree. 
The resulting cover coincides with $\wt \Gamma$, as it does not depend on the choice of a spanning tree, and we have $\gi \, \wt \Gamma = 2 \cdot \gi \, \Gamma$. 

Let $e,e'$ belong to a common wall in $\wt \Gamma$. Then $\pi(e)=\pi(e')$, by our definition of walls~\cite{AGS}. Let $\gamma \subseteq \wt \Gamma$ be a geodesic between
$e$ and $e'$, with endpoints $v\in e$ and $v'\in e'$. Let $w\neq v$ be another vertex of $e$. If
$\pi(v)=\pi(v')$ then $\pi(\gamma)$ contains a closed path in $\Gamma$. If $\pi(w)=\pi(v')$ then there is a closed path in $\Gamma$ of length
at most $d(v,v')+1$.
In both cases we obtain
\begin{align*}
d(e,e')+1=d(v,v')+1\geqslant \gi \, \Gamma = (1/2)\gi \, \wt \Gamma.
\end{align*}
\end{proof}

In order to guarantee the $\delta$--condition from Definition~\ref{d:sep}, we take  the $\mathbb{Z}/2$--homology cover of an appropriately chosen sequence of graphs
(instead of the above tower of successive covers starting with the bouquet of $m$ circuits).

Let $\Lambda = (\Lambda_i)_{i\in \mathbb N}$ be an infinite family of finite 2-connected graphs, with 
$\gi\, \Lambda_i\to\infty$ as $i\to\infty$, and such that $\di\, \Lambda_i/\gi\, \Lambda_i\leqslant M$ for some $M>0$, uniformly over $i\in \mathbb N$.
Let $\wt\Lambda = (\wt\Lambda_i)_{i\in \mathbb N}$ be the corresponding infinite family of the $\mathbb Z/2$--homology covers:
$\wt\Lambda_i$ is the  $\mathbb Z/2$--homology cover of $\Lambda_i$, for each $i\in \mathbb N$. Observe that $\gi\, \wt\Lambda_i\to\infty$ as $i\to\infty$.
We endow $\wt\Lambda$ with our wall structure as above.

\begin{lem}\label{lem:dbox}
Let $\gamma$ be a geodesic in $\wt\Lambda$.  Then there exists $\delta\in (0,1)$ such that
 the number of edges in $\gamma$ whose walls meet $\gamma$ at
least twice is at most $\delta |\gamma|$. 
\end{lem}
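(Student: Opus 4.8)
The plan is to bound, for a geodesic $\gamma$ in $\wt\Lambda_i$, the number of edges whose walls meet $\gamma$ twice, using the hypothesis $\di\,\Lambda_i / \gi\,\Lambda_i \leqslant M$ together with the structure of $\mathbb Z/2$--homology covers. First I would observe that if an edge $e$ of $\gamma$ lies in a wall $w$ that meets $\gamma$ in another edge $e'$, then, as in the proof of Lemma~\ref{lem:bbox}, the projection $\pi$ satisfies $\pi(e)=\pi(e')$, and the sub-path of $\gamma$ between $e$ and $e'$ projects to a closed path in $\Lambda_i$ (or a closed path after adding one edge). In particular the two edges $e,e'$ are at distance at least $\gi\,\Lambda_i - 1$ apart along $\gamma$; combined with the fact that $\gamma$ is a geodesic in $\wt\Lambda_i$, this forces $\pi\circ\gamma$ to ``wrap around'' a short cycle.

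The key step is a counting argument: I would partition the edges of $\gamma$ whose walls meet $\gamma$ twice according to which edge-vertex of the hypergraph tree $\Gamma_w$ is the closest partner, and show that each such edge $e$ can be charged to a closed path in $\Lambda_i$ of bounded length. Concretely, if $e$ and its closest partner $e'$ on $\gamma$ both lie in $w=\pi^{-1}(\pi(e))$, the sub-geodesic $\gamma[e,e']$ of $\wt\Lambda_i$ projects to a closed loop $\ell$ in $\Lambda_i$; since $\gamma[e,e']$ is a geodesic in the cover, its length is at most $\di\,\wt\Lambda_i$, but more usefully its projection, being a loop in $\Lambda_i$ that lifts non-trivially (the lift is a path, not a loop), must be non-null-homologous mod $2$ and hence has length at least $\gi\,\Lambda_i$. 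The point is then to bound how many edges of $\gamma$ can fail to be ``$A(\gamma)$--edges'': I expect to show that the edges with walls meeting $\gamma$ twice occur in a controlled number of clusters, and within each cluster the relevant sub-path of $\gamma$ projects essentially injectively onto a path in $\Lambda_i$ of length at most $\di\,\Lambda_i$, so each cluster contributes at most $O(\di\,\Lambda_i)$ bad edges while forcing $\gamma$ to traverse at least $\Omega(\gi\,\Lambda_i)$ edges total in a ``good'' complementary stretch. This yields $\delta = c M/(c M + \text{const})$ or a similar explicit bound in $(0,1)$ depending only on $M$.

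The main obstacle will be controlling the interaction between distinct walls that meet $\gamma$ twice — a single stretch of $\gamma$ could be simultaneously covered by many such walls, and one must avoid over-counting or under-counting. I would handle this by arguing that along any maximal sub-path of $\gamma$ all of whose edges have walls meeting $\gamma$ twice, the projection $\pi$ restricted to that sub-path is still a local geodesic in $\Lambda_i$ (because $\gamma$ is a geodesic upstairs and $\pi$ is a covering map, hence a local isometry), so such a sub-path has length at most $\di\,\Lambda_i$; between two consecutive such ``bad stretches'' lies an edge of $A(\gamma)$, but more to the point, the total geodesic $\gamma$ decomposes so that each bad stretch of length $\leqslant \di\,\Lambda_i$ is separated from the next by the obstruction that its endpoints force a short loop, hence a jump of at least $\gi\,\Lambda_i$. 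Summing over the decomposition and using $\di\,\Lambda_i \leqslant M\cdot\gi\,\Lambda_i$ gives the claimed linear bound with an explicit $\delta < 1$. I would also need to verify that $\gi\,\wt\Lambda_i \to \infty$ (already noted) so that the $\beta$--condition and $\delta$--condition can be used simultaneously in the lacunary walling condition; this is immediate from $\gi\,\wt\Lambda_i = 2\gi\,\Lambda_i$.
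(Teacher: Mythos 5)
Your overall strategy is the same as the paper's: push $\gamma$ down to $\Lambda_i$, decompose the projection into loops of length at least $\gi\,\Lambda_i$ and doubly-traversed stretches of length at most $\di\,\Lambda_i$, and conclude via $\di\,\Lambda_i\leqslant M\cdot\gi\,\Lambda_i$. However, two steps in your execution do not hold as stated. First, you bound a maximal ``bad stretch'' of $\gamma$ by $\di\,\Lambda_i$ on the grounds that its projection is a local geodesic in $\Lambda_i$. A local geodesic in a finite graph is not bounded by the diameter (it can wind around a cycle arbitrarily often), and the projection of a geodesic of the cover is in general not a global geodesic downstairs. What actually makes the bound work is the structure theory of shortest admissible paths from \cite{AGS}: $\pi(\gamma)$ has the same edge-length as $\gamma$, traverses each edge at most twice, traverses every edge on each of its loops exactly once, and its doubly-traversed part decomposes into genuine geodesics $t_1,\ldots,t_l$ of $\Lambda_i$ (these are the pieces of length at most $\di\,\Lambda_i$). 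None of this is established by the covering-map/local-isometry remark alone, and it is exactly the content you would have to supply.

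Second, your claim that consecutive bad stretches are ``separated by a jump of at least $\gi\,\Lambda_i$'' is false: in the loop--tree (``flower'') decomposition, the return traversal of one doubly-traversed geodesic can be immediately followed by the outgoing traversal of the next one, with no long singly-traversed segment in between. So a local charging of each bad stretch to an adjacent good stretch does not close. The correct accounting is global: if $\pi(\gamma)$ contains $L$ loops, then the number of doubly-traversed geodesic pieces satisfies $l\leqslant 2L$ (proved by induction on $L$ using the tree structure), whence
\begin{align*}
\frac{\sum_k|c_k|+2\sum_j|t_j|+\sum_q|s_q|}{\sum_k|c_k|+\sum_q|s_q|}\leqslant 2+\frac{4L\cdot\di\,\Lambda_i}{L\cdot\gi\,\Lambda_i}\leqslant 2+4M,
\end{align*}
giving $\delta=\frac{1+4M}{2+4M}$. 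Your identification of when two edges of $\gamma$ share a wall (via $\pi(e)=\pi(e')$, as in Lemma~\ref{lem:bbox}) is correct, and the final shape of $\delta$ you predict is right, but without the admissible-path structure and the global $l\leqslant 2L$ count the argument has a genuine gap. (Also, the hypergraph tree $\Gamma_w$ belongs to the wall machinery for the complex $X$, not to the graphs $\wt\Lambda_i$ themselves; here the walls are just the fibers $\pi^{-1}(e)$.)
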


\begin{proof}
 The image of $\gamma$ under the covering projection $\pi\colon \wt\Lambda \to\Lambda$ is a so-called admissible path $\pi(\gamma)$ in $\Lambda$, 
 see~\cite{AGS}*{Definition 3.5}. We have $|\gamma|=|\pi(\gamma)|$ for the edge-length, see~\cite{AGS}*{Lemma 3.6 and Proposition 3.8}, and the path $\pi(\gamma)$ 
 either does not contain any loop, or else every edge on any loop it contains is traversed exactly once, see~\cite{AGS}*{Lemma 3.12}. 
 Note that $\pi(\gamma)$ has no any backtrack since $\gamma$ is geodesic, see~\cite{AGS}*{Remark 3.9}.
 Therefore, if it does not contain any loop, then no walls meet $\gamma$ at least twice, and the $\delta$--condition
 is satisfied for all $\delta\in (0,1)$. 
 
 Suppose now that $\pi(\gamma)$ contains $L\geqslant 1$ loops. Then $\pi(\gamma)$ decomposes
 into $L$ loops $c_1,\ldots, c_L$ traversed exactly once, $l$ geodesics $t_1,\ldots, t_{l}$ which are followed twice (toward and backward each loop as well as between such ``flowers''), and
 finitely many geodesics $s_1, s_2, \ldots$, traversed exactly once, joining several trees  formed by $t_j$'s, see Figure~\ref{f:adm}. 
 
\begin{figure}[h!]
\centering
\includegraphics[width=1\textwidth]{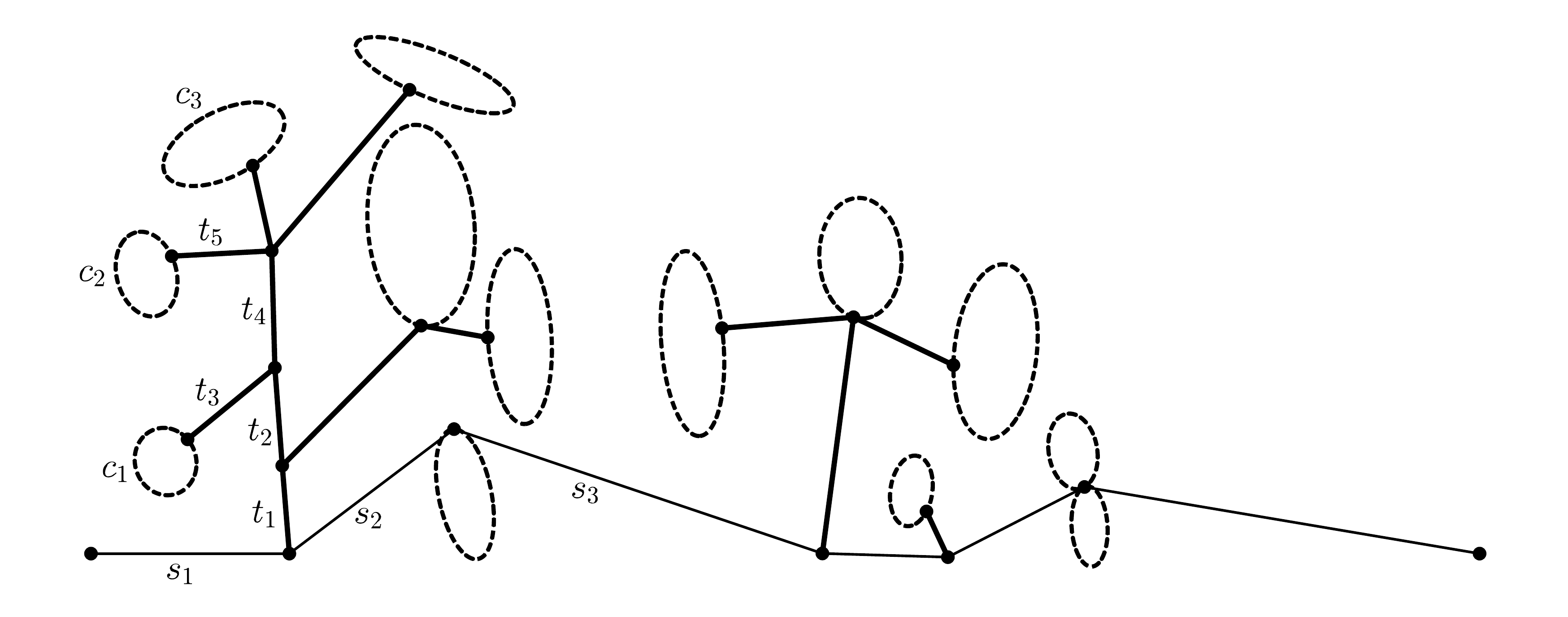}
\caption{Decomposition of a shortest admissible path $\pi(\gamma)$ in $\Lambda$.}
\label{f:adm}
\end{figure}

 For all $k$ and $j$, we have $|c_k|\geqslant \gi\, \Lambda_i$ and $|t_j|\leqslant \di\, \Lambda_i$, for some $i$.  
 Using the induction on $L$, taking into account the tree structures formed by $t_j$'s, one checks that $l\leqslant 2L$. Thus, we have
 $$
 |\gamma|/|\gamma|_{\mathcal W}=\frac{\sum_k|c_k|+2\sum_j|t_j| +\sum_q|s_q|}{\sum_k|c_k|+\sum_q|s_q|}\leqslant 2+ \frac{2(2L)\di\,\Lambda_i}{L\,\gi\,\Lambda_i}\leqslant 2+4M,
 $$ 
 where $|\gamma|_{\mathcal W}$ is the  length of $\gamma$ with respect to the wall pseudo-metric, 
 that is, the number of edges in $\gamma$ whose walls meet $\gamma$ exactly once~\cite{AGS}*{Proposition 3.10}.
Therefore, $\wt\Lambda$ satisfies the $\delta$--condition with $\delta=1-\frac{1}{2+4M}=\frac{1+4M}{2+4M}$.
\end{proof}

We have just checked, using the results of~\cite{AGS}, that the graph metric and 
the wall pseudo-metric on $\wt\Lambda$ are bi-Lipschitz equivalent.  This is the main result of~\cite{Ostrov} 
(also giving $\delta$, see inequality (3) in that paper), where
such a choice of $\Lambda$ was explored, in the context of metric embeddings into the Banach space $\ell_1$.

The existence of infinite families $\Lambda = (\Lambda_i)_{i\in \mathbb N}$ as above, consisting of finite regular graphs of vertex degree at least 3, is well-known. For instance, the famous Ramanujan
sequence of Lubotzky-Phillips-Sarnak provides such a family of finite $(p+1)$--regular graphs, where $p$ is an odd prime.

Thus,  $\wt\Lambda = (\wt\Lambda_i)_{i\in \mathbb N}$ is coarsely embeddable into a Hilbert space,
has a bi-Lipschitz embedding into the Banach space $\ell_1$ (as every wall space has an isometric embedding into $\ell_1$),
 but is not coarsely amenable since $\gi\, \wt\Lambda_i\to\infty$ as $i\to\infty$ and the graphs are regular of vertex degree at least 3~\cite{Willett}.
 By Lemmas~\ref{lem:bbox} and~\ref{lem:dbox}, $\wt\Lambda = (\wt\Lambda_i)_{i\in \mathbb N}$ satisfies the $(\beta,\delta)$--condition required
 by the lacunary walling condition.

%=================================================
\section{Discussion on the optimality}
\label{optim}

In this section, we argue that our lacunary walling condition is essential for obtaining the main result
--- linear separation property for small cancellation complexes; see Theorem~\ref{t:linear} in Introduction and
Theorem~\ref{p:lsp}. We focus on the \bds property and on the lacunarity (together with the compatibility)
from Definition~\ref{d:cond}. We provide examples showing that if any of them fails then the wall pseudo-metric
may be non-comparable with the $1$--skeleton metric. 

\subsection{\texorpdfstring{\bds}{(beta, delta)--separation} property.}
\label{s:bds}

The $\beta$--condition from Definition~\ref{d:sep} is essential for the definition of walls in $X$; see Section~\ref{s:walls}.
Without it one cannot usually extend the walls in relators to the whole $X$. It corresponds to the \emph{$\frac{\pi}{2}$--strong separation property} from \cite[Section 5.k]{W-qch}, and 
is essential also in the finitely presented case. For the rest of this subsection
we therefore focus on the $\delta$--condition from Definition~\ref{d:sep}. This condition is required in the infinitely presented case
and may be easily omitted in the finitely presented one.
\medskip  

Consider the $(4k+2)$--gon $C_k=(v_1,v_2,\ldots,v_{4k+2},v_{4k+3}=v_1)$.
Assume that for every $i\in \{1,2,\ldots,k\} \cup \{2k+2,2k+3,\ldots,3k+1 \}$
the edges $v_iv_{i+1}$ and $v_{i+k+1}v_{i+k+2}$
are dual to common wall $w_i$. Moreover, let the edges $v_{k+1}v_{k+2}$ and
$v_{3k+2},v_{3k+3}$ be dual to common wall $w_{k+1}$ --- see Figure \ref{f:14}.
Observe that the system of walls satisfies the $\beta$--condition from Definition~\ref{d:sep}, for $\beta$ approaching
from above $1/4$ while $k$ grows.
Note however, that
$d(v_1,v_{2k+2})=2k+1$ while $d_W(v_1,v_{2k+2})=1$, where $d_W$ is the wall pseudo-metric
given by walls $w_i$,  that is, the number of walls separating given vertices.
This means that an infinite family $\{ C_{k_i} \}_{i\in \mathbb N}$ does not satisfy the $\delta$--condition
for any $\delta >0$.

\begin{figure}[h!]
\centering
\includegraphics[width=0.5\textwidth]{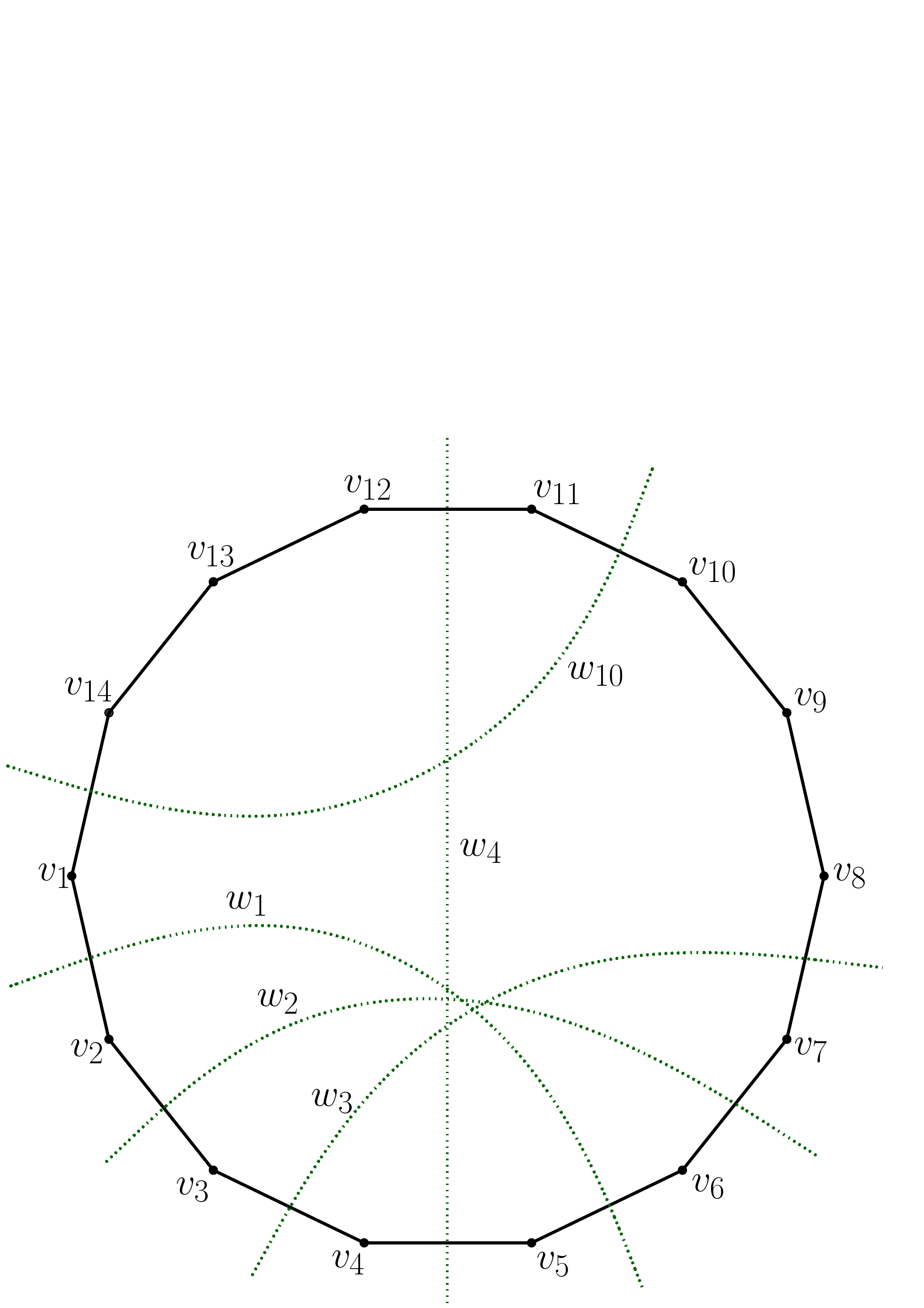}
\caption{Failure of the $\delta$--condition.}
\label{f:14}
\end{figure}

One can easily construct a (classical) infinite small cancellation presentation with relators being 
$(4k+2)$--gons as above. When equipped with walls induced by the above walls in relators, the Cayley complex 
of the group $G$ defined by such a presentation becomes a space with walls.
Yet $X$ contains arbitrarily long geodesic (contained moreover in relators) separated by one wall. Consequently, the group $G$ does not act properly on the obtained space with walls. 
It shows that both: the $\beta$--condition and the $\delta$--condition, are necessary in our approach.

Note that the system of walls as above can satisfy (choosing an appropriate small cancellation labeling) the 
\emph{$\frac{\pi}{2}$--strong separation property} from \cite[Section 5.k]{W-qch}. This means that the pathologies as just described
are characteristic for infinite presentations, and not for finite ones.

\subsection{Lacunarity/Compatibility}
\label{s:lacun}

Consider a relator $r$ as in Figure~\ref{f:lacun1}. Here, walls correspond to the sets of edges: 
$$\{ a_i,a_i',a_i'',a_i'''\}, \{ b_i,b_i',b_i'',b_i'''\},\hbox{ for }
i=1,2,3,4, \hbox{ and } \{c,c' \},\{d,d'\},\{e,e'\},\{f,f'\}.$$
Observe that $\gi \, r = 16 $, and that the wall system satisfies the \bds property for $\beta=1/4$ and some $\delta <1$. 

\begin{figure}[h!]
\centering
\includegraphics[width=0.8\textwidth]{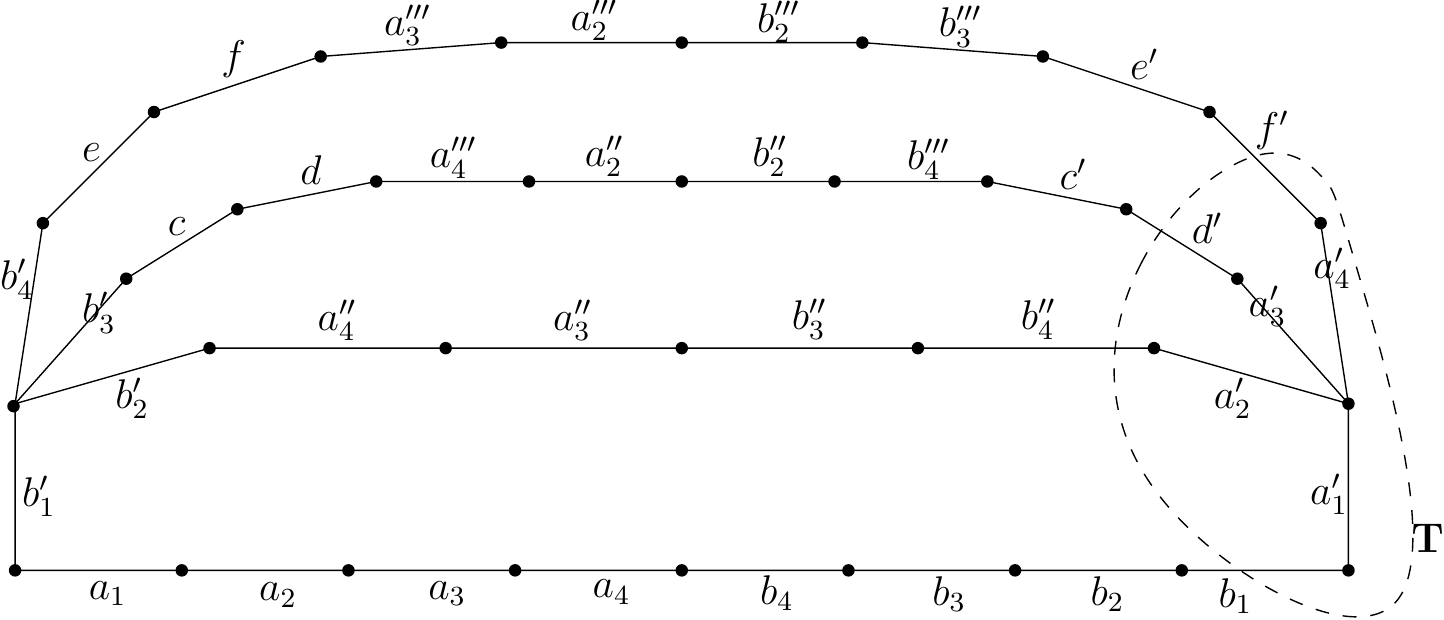}
\caption{Relator $r$.}
\label{f:lacun1}
\end{figure}

For $k=3,4,\ldots,$ we now construct a complex $X_k$ satisfying the $C'(1/8)$--small cancellation condition using the relator $r$ 
--- see Figure~\ref{f:lacun2}.
The complex $X_k$ is the union of $k$ copies of $r$, such that consecutive copies share a common tree of valence $4$ and diameter
$3$, consisting of edges labeled by $a_i'$ or $b_i'$ in $r$ (like the tree $\bf T$ in Figure~\ref{f:lacun1}).

\begin{figure}[h!]
\centering
\includegraphics[width=0.8\textwidth]{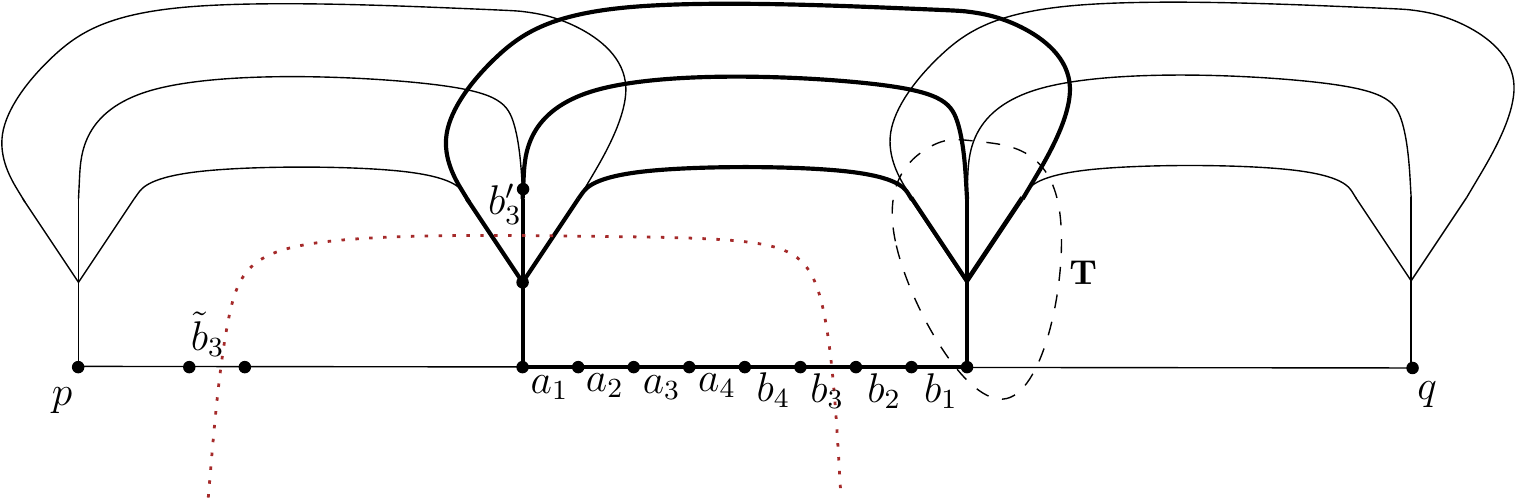}
\caption{The complex $X_k$, for $k=3$.}
\label{f:lacun2}
\end{figure}

We equip the complex $X_k$ with the wall system $\mathcal W$ as in Subsection~\ref{s:walls}, by extending the walls in $r$.
Note that $X_k$ does not satisfy the lacunarity/compatibility conditions from Definition~\ref{d:cond}.
By lacunarity we have to have (using the notations from Definintion~\ref{d:cond}):
\begin{align*}
b_{r}\left( \frac{1}{8}\cdot 16\right) = 4 \leqslant K \cdot 16,
\end{align*}
which implies $K\geqslant 1/4$. For such $K$ however there is no way to satisfy the compatibility condition for any $M> 0$.

Consider $X_3$, and a geodesic with endpoints $p,q$, as in Figure~\ref{f:lacun2}. Observe that no wall in $\mathcal W$, corresponding to $a_i,b_i$ separates $p$ from $q$.
Therefore we have
$\dw (p,q) = 8$ and $d(p,q)=24$, in $X_3$. Similarly, for any $k$, in $X_k$ one may find a geodesic (corresponding to $pq$) of length 
$8k$ whose endpoints are separated by $8$ walls from $\mathcal W$. This shows that in the corresponding infinite union $X_{\infty}$ 
the wall pseudo-metric is not proper.

Similarly, one can construct other examples for arbitrary small cancellation constant $\lambda>0$.
The point here is that the number of edges in the tree $\bf T$ may be exponentially large compared to its diameter.
Then the corresponding walls may ``exhaust" most of the edges in a given geodesic.

Observe that in the above example we use only finitely many (precisely, one) types of relators. This corresponds to the case 
of finitely presented groups. Wise \cite[Section 5.k]{W-qch} uses a notion of \emph{$\frac{\pi}{2}$--strong separation property}
for relators, to obtain an analogous linear separation for finitely presented graphical small cancellation groups. In our approach, 
the $\frac{\pi}{2}$--strong separation property is replaced by the lacunarity condition. Neither of these properties implies another one.
We decided to use the lacunarity as a condition that suits better our -- quantitative -- approach to spaces with walls.

%=======================================================================

\begin{bibdiv}
\begin{biblist}

\bib{AW}{article}{
    AUTHOR = {Akemann, Ch. A.},
    AUTHOR = {Walter, M. E.},
     TITLE = {Unbounded negative definite functions},
   JOURNAL = {Canad. J. Math.},
  FJOURNAL = {Canadian Journal of Mathematics. Journal Canadien de Math\'ematiques},
    VOLUME = {33},
      YEAR = {1981},
    NUMBER = {4},
     PAGES = {862--871},
      ISSN = {0008-414X},
     CODEN = {CJMAAB},
   MRCLASS = {43A35 (22D10)},
  MRNUMBER = {634144 (83b:43009)},
MRREVIEWER = {Pierre Eymard},
       %DOI = {10.4153/CJM-1981-067-9},
       URL = {http://dx.doi.org/10.4153/CJM-1981-067-9},
}

\bib{claire}{article}{
   author={Anantharaman-Delaroche, C.},
   title={Amenability and exactness for dynamical systems and their $C^\ast$-algebras},
   journal={Trans. Amer. Math. Soc.},
   volume={354},
   date={2002},
   number={10},
   pages={4153--4178 (electronic)},
   %issn={0002-9947},
   %review={\MR{1926869 (2004e:46082)}},
   %doi={10.1090/S0002-9947-02-02978-1},
}

\bib{AD}{article}{
   author={Arzhantseva, G.},
   author={Delzant, T.},
TITLE = {Examples of random groups},
status =   {preprint},
eprint = {http://www.mat.univie.ac.at/~arjantseva/publicationsGA.html},
 YEAR = {2008}}

\bib{AGS}{article}{
   author={Arzhantseva, G.},
   author={Guentner, E.},
   author={{\v{S}}pakula, J.},
   title={Coarse non-amenability and coarse embeddings},
   journal={Geom. Funct. Anal.},
   volume={22},
   date={2012},
   number={1},
   pages={22--36},
   %issn={1016-443X},
   %review={\MR{2899681}},
   %doi={10.1007/s00039-012-0145-z},
}

\bib{AO}{article}{
   author={Arzhantseva, G.},
   author={Osajda, D.},
TITLE = {Infinitely presented small cancellation groups have the Haagerup property},
 status =   {preprint},
 eprint = { arXiv:1212.4866},
 YEAR = {2012}}

\bib{BR}{article}{
AUTHOR = {Bergelson, V.},
AUTHOR = {Rosenblatt, J.},
     TITLE = {Mixing actions of groups},
   JOURNAL = {Illinois J. Math.},
  FJOURNAL = {Illinois Journal of Mathematics},
    VOLUME = {32},
      YEAR = {1988},
    NUMBER = {1},
     PAGES = {65--80},
      ISSN = {0019-2082},
     CODEN = {IJMTAW},
   MRCLASS = {28D15 (22D10)},
  %MRNUMBER = {921351 (89g:28029)},
MRREVIEWER = {Arlan Ramsay},
       URL = {http://projecteuclid.org/getRecord?id=euclid.ijm/1255989229},
}

\bib{BoJaS}{article}{
   author={Bo{\.z}ejko, M.},
   author={Januszkiewicz, T.},
   author={Spatzier, R.},
   title={Infinite Coxeter groups do not have Kazhdan's property},
   journal={J. Operator Theory},
   volume={19},
   date={1988},
   number={1},
   pages={63--67},
   issn={0379-4024},
 %  review={\MR{950825 (89i:22025)}},
}

\bib{ChCJJV}{book}{
    AUTHOR = {Cherix, P.-A.},
    AUTHOR = {Cowling, M.},
    AUTHOR = {Jolissaint, P.},
      AUTHOR = {Julg, P.},
       AUTHOR = {Valette, A.},
     TITLE = {Groups with the {H}aagerup property (Gromov's a-T-menability)},
    SERIES = {Progress in Mathematics},
    VOLUME = {197},
      %NOTE = {Gromov's a-T-menability},
      PUBLISHER = {Birkh\"auser Verlag},
   ADDRESS = {Basel},
      YEAR = {2001},
     PAGES = {viii+126},
      ISBN = {3-7643-6598-6},
   MRCLASS = {22D10 (22-02 22D25 22E30 43A07 46Lxx)},
  %MRNUMBER = {1852148 (2002h:22007)},
MRREVIEWER = {Tullio G. Ceccherini-Silberstein},
       %DOI = {10.1007/978-3-0348-8237-8},
       URL = {http://dx.doi.org/10.1007/978-3-0348-8237-8},
}

\bib{ChMV}{article}{
   author={Cherix, P.-A.},
   author={Martin, F.},
   author={Valette, A.},
   title={Spaces with measured walls, the Haagerup property and property
   (T)},
   journal={Ergodic Theory Dynam. Systems},
   volume={24},
   date={2004},
   number={6},
   pages={1895--1908},
   issn={0143-3857},
%   review={\MR{2106770 (2005i:22006)}},
%   doi={10.1017/S0143385704000185},
}

\bib{Cho}{article}{
    AUTHOR = {Choda, M.},
     TITLE = {Group factors of the {H}aagerup type},
   JOURNAL = {Proc. Japan Acad. Ser. A Math. Sci.},
  FJOURNAL = {Japan Academy. Proceedings. Series A. Mathematical Sciences},
    VOLUME = {59},
      YEAR = {1983},
    NUMBER = {5},
     PAGES = {174--177},
      ISSN = {0386-2194},
     CODEN = {PJAADT},
   MRCLASS = {46L35},
  %MRNUMBER = {718798 (85f:46117)},
MRREVIEWER = {Vaughan Jones},
       URL = {http://projecteuclid.org/getRecord?id=euclid.pja/1195515589},
}

\bib{Cor}{article}{
   author={Cornulier, Y.},
TITLE = {Group actions with commensurated subsets, wallings and cubings},
 status =   {preprint},
 eprint = { arXiv:1302.5982 },
 YEAR = {2013}}

\bib{Gro88}{incollection} {
    AUTHOR = {Gromov, M.},
     TITLE = {Rigid transformations groups},
 BOOKTITLE = {G\'eom\'etrie diff\'erentielle ({P}aris, 1986)},
    SERIES = {Travaux en Cours},
    VOLUME = {33},
     PAGES = {65--139},
 PUBLISHER = {Hermann},
   ADDRESS = {Paris},
      YEAR = {1988},
   MRCLASS = {58H15 (22E40 53C10 57R15 58G30)},
 % MRNUMBER = {955852 (90d:58173)},
MRREVIEWER = {Christopher W. Stark},
}

\bib{Gro93}{incollection} {
    AUTHOR = {Gromov, M.},
     TITLE = {Asymptotic invariants of infinite groups},
 BOOKTITLE = {Geometric group theory, {V}ol.\ 2 ({S}ussex, 1991)},
    SERIES = {London Math. Soc. Lecture Note Ser.},
    VOLUME = {182},
     PAGES = {1--295},
 PUBLISHER = {Cambridge Univ. Press},
   ADDRESS = {Cambridge},
      YEAR = {1993},
   MRCLASS = {20F32 (57M07)},
  %MRNUMBER = {1253544 (95m:20041)},
}

\bib{Gro}{article}{
   author={Gromov, M.},
   title={Random walk in random groups},
   journal={Geom. Funct. Anal.},
   volume={13},
   date={2003},
   number={1},
   pages={73--146},
   issn={1016-443X},
   %review={\MR{1978492 (2004j:20088a)}},
   %doi={10.1007/s000390300002},
}

\bib{Haa}{article} {
    AUTHOR = {Haagerup, U.},
     TITLE = {An example of a nonnuclear {$C^{\ast} $}-algebra, which has
              the metric approximation property},
   JOURNAL = {Invent. Math.},
  FJOURNAL = {Inventiones Mathematicae},
    VOLUME = {50},
      YEAR = {1978/79},
    NUMBER = {3},
     PAGES = {279--293},
      ISSN = {0020-9910},
     CODEN = {INVMBH},
   MRCLASS = {46L05 (22D35 43A35)},
  MRNUMBER = {520930 (80j:46094)},
MRREVIEWER = {Ole A. Nielsen},
       %DOI = {10.1007/BF01410082},
       URL = {http://dx.doi.org/10.1007/BF01410082},
}

\bib{HP}{article}{
   author={Haglund, F.},
   author={Paulin, F.},
   title={Simplicit\'e de groupes d'automorphismes d'espaces \`a courbure
   n\'egative},
  %language={French, with English and French summaries},
   conference={
      title={The Epstein birthday schrift},
   },
   book={
      series={Geom. Topol. Monogr.},
      volume={1},
      publisher={Geom. Topol. Publ., Coventry},
   },
   date={1998},
   pages={181--248},
 %  review={\MR{1668359 (2000b:20034)}},
  %doi={10.2140/gtm.1998.1.181},
}

\bib{HK}{article}{
    AUTHOR = {Higson, N.},
    AUTHOR = {Kasparov, G.},
     TITLE = {Operator {$K$}-theory for groups which act properly and
              isometrically on {H}ilbert space},
   JOURNAL = {Electron. Res. Announc. Amer. Math. Soc.},
  FJOURNAL = {Electronic Research Announcements of the American Mathematical
              Society},
    VOLUME = {3},
      YEAR = {1997},
     PAGES = {131--142},
      ISSN = {1079-6762},
   MRCLASS = {46L80 (19K56)},
  MRNUMBER = {1487204 (99e:46090)},
%MRREVIEWER = {Vicumpriya S. Perera},
       %DOI = {10.1090/S1079-6762-97-00038-3},
       URL = {http://dx.doi.org/10.1090/S1079-6762-97-00038-3},
}

\bib{HLS}{article}{
    AUTHOR = {Higson, N.},
    AUTHOR = {Lafforgue, V.},
    AUTHOR = {Skandalis, G.},
     TITLE = {Counterexamples to the {B}aum-{C}onnes conjecture},
   JOURNAL = {Geom. Funct. Anal.},
  FJOURNAL = {Geometric and Functional Analysis},
    VOLUME = {12},
      YEAR = {2002},
    NUMBER = {2},
     PAGES = {330--354},
}

\bib{LS}{book}{
   author={Lyndon, R. C.},
   author={Schupp, P. E.},
   title={Combinatorial group theory},
   series={Classics in Mathematics},
   note={Reprint of the 1977 edition},
   publisher={Springer-Verlag},
   place={Berlin},
   date={2001},
   pages={xiv+339},
   isbn={3-540-41158-5},
  % review={\MR{1812024 (2001i:20064)}},
}

\bib{MeyerNest}{article}{
   author={Meyer, R.},
   author={Nest, R.},
   title={The Baum-Connes conjecture via localisation of categories},
   journal={Topology},
   volume={45},
   date={2006},
   number={2},
   pages={209--259},
   %issn={0040-9383},
   %review={\MR{2193334 (2006k:19013)}},
   %doi={10.1016/j.top.2005.07.001},
}

\bib{MislinValette}{book}{
    AUTHOR = {Mislin, G.},
        AUTHOR = {Valette, A.},
     TITLE = {Proper group actions and the {B}aum-{C}onnes conjecture},
    SERIES = {Advanced Courses in Mathematics. CRM Barcelona},
 PUBLISHER = {Birkh\"auser Verlag},
   ADDRESS = {Basel},
      YEAR = {2003},
     PAGES = {viii+131},
      ISBN = {3-7643-0408-1},
   MRCLASS = {19K35 (46L80 55N20 58J22)},
  %MRNUMBER = {2027168 (2005d:19007)},
       %DOI = {10.1007/978-3-0348-8089-3},
       URL = {http://dx.doi.org/10.1007/978-3-0348-8089-3},
}

\bib{NowakYu}{book}{
   author={Nowak, P. W.},
   author={Yu, G.},
   title={Large scale geometry},
   series={EMS Textbooks in Mathematics},
   publisher={European Mathematical Society (EMS), Z\"urich},
   date={2012},
   pages={xiv+189},
   %isbn={978-3-03719-112-5},
   %review={\MR{2986138}},
   %doi={10.4171/112},
}

\bib{Oll}{article}{
   author={Ollivier, Y.},
   title={On a small cancellation theorem of Gromov},
   journal={Bull. Belg. Math. Soc. Simon Stevin},
   volume={13},
   date={2006},
   number={1},
   pages={75--89},
%   issn={1370-1444},
%   review={\MR{2245980 (2007e:20066)}},
}

\bib{OllivierWise}{article}{
    AUTHOR = {Ollivier, Y.},
    author={Wise, D. T.},
     TITLE = {Kazhdan groups with infinite outer automorphism group},
   JOURNAL = {Trans. Amer. Math. Soc.},
  FJOURNAL = {Transactions of the American Mathematical Society},
    VOLUME = {359},
      YEAR = {2007},
    NUMBER = {5},
     PAGES = {1959--1976 (electronic)},
      ISSN = {0002-9947},
     CODEN = {TAMTAM},
   %MRCLASS = {20F06 (20E22 20F28 20P05)},
  %MRNUMBER = {2276608 (2008a:20049)},
%MRREVIEWER = {Alain Valette},
      % DOI = {10.1090/S0002-9947-06-03941-9},
      % URL = {http://dx.doi.org/10.1090/S0002-9947-06-03941-9},
}

\bib{Ostrov}{article}{
   author={Ostrovskii, M. I.},
   title={Low-distortion embeddings of graphs with large girth},
   journal={J. Funct. Anal.},
   volume={262},
   date={2012},
   number={8},
   pages={3548--3555},
   %issn={0022-1236},
   %review={\MR{2889167}},
   %doi={10.1016/j.jfa.2012.01.020},
}

\bib{RS}{article}{
   author={Rips, E.},
   author={Segev, Y.},
   title={Torsion-free group without unique product property},
   journal={J. Algebra},
   volume={108},
   date={1987},
   number={1},
   pages={116--126},
   issn={0021-8693},
%   review={\MR{887195 (88g:20071)}},
   % doi={10.1016/0021-8693(87)90125-6},
}

\bib{Silberman}{article}{
   author={Silberman, L.},
   title={Addendum to: ``Random walk in random groups'' [Geom.\ Funct.\
   Anal.\ {\bf 13} (2003), no.\ 1, 73--146; MR1978492] by M. Gromov},
   journal={Geom. Funct. Anal.},
   volume={13},
   date={2003},
   number={1},
   pages={147--177},
   %issn={1016-443X},
   %review={\MR{1978493 (2004j:20088b)}},
   %doi={10.1007/s000390300003},
}

\bib{Willett}{article}{
   author={Willett, R.},
   title={Property A and graphs with large girth},
   journal={J. Topol. Anal.},
   volume={3},
   date={2011},
   number={3},
   pages={377--384},
   issn={1793-5253},
%   review={\MR{2831267 (2012j:53047)}},
%   doi={10.1142/S179352531100057X},
}

\bib{W-sc}{article}{
   author={Wise, D. T.},
   title={Cubulating small cancellation groups},
   journal={Geom. Funct. Anal.},
   volume={14},
   date={2004},
   number={1},
   pages={150--214},
   issn={1016-443X},
  % review={\MR{2053602 (2005c:20069)}},
   % doi={10.1007/s00039-004-0454-y},
}

\bib{W-qch}{article}{
    title     ={The structure of groups with quasiconvex hierarchy},
    author    ={Wise, D. T.},
    status={preprint},
    eprint    ={https://docs.google.com/open?id=0B45cNx80t5-2T0twUDFxVXRnQnc},
    date={2011}
}

\bib{W-ln}{book}{
   author={Wise, D. T.},
   title={From riches to raags: 3-manifolds, right-angled Artin groups, and
   cubical geometry},
   series={CBMS Regional Conference Series in Mathematics},
   volume={117},
   publisher={Published for the Conference Board of the Mathematical
   Sciences, Washington, DC},
   date={2012},
   pages={xiv+141},
   isbn={978-0-8218-8800-1},
  % review={\MR{2986461}},
}

\bib{Yu}{article}{
   author={Yu, G.},
   title={The coarse Baum-Connes conjecture for spaces which admit a uniform
   embedding into Hilbert space},
   journal={Invent. Math.},
   volume={139},
   date={2000},
   number={1},
   pages={201--240},
   issn={0020-9910},
   %review={\MR{1728880 (2000j:19005)}},
   %doi={10.1007/s002229900032},
}
\end{biblist}
\end{bibdiv}

\end{document}